\crefname{section}{Sec.}{Sec.}
\crefname{figure}{Fig.}{Fig.}
\crefname{table}{Tab.}{Tab.}
\crefname{algorithm}{Alg.}{Alg.}
\crefname{equation}{Eq.}{Eq.}
\theoremstyle{plain}
\newtheorem{theorem}{Theorem}[section]
\newtheorem{lemma}[theorem]{Lemma}
\theoremstyle{definition}
\newtheorem{definition}[theorem]{Definition}
\newtheorem{properties}[theorem]{Properties}
\theoremstyle{remark}
\newtheorem{remark}[theorem]{Remark}
\newcommand{\argmin}[1]{\underset{#1}{\arg \min} \;}
\newcommand{\argmax}[1]{\underset{#1}{\arg \max} \;}
\newcommand{\R}{\mathbb{R}}
\newcommand{\N}{\mathbb{N}}
\newcommand{\E}{\mathbb{E}}
\newcommand{\K}{\mathfrak{K}}
\newcommand{\A}{\mathcal{A}}
\renewcommand{\H}{\mathcal{H}}
\newcommand{\KL}{\mathrm{KL}}
\newcommand{\dif}{\: \mathrm{d}}
\newcommand{\diff}{\partial}
\newcommand{\Po}{\ensuremath{\mathcal{P}_2(\Omega)}}
\renewcommand{\cal}[1]{\mathcal{#1}}
\newcommand{\bb}[1]{\mathbb{#1}}
\newcommand{\bbm}[1]{\mathds{#1}}
\newcommand{\inclusion}{\xhookrightarrow{}}
\newcommand{\LuO}{L^2_\mu(\Omega)}
\newcommand{\LuOO}{L^2_\mu(\Omega, \Omega)}
\renewcommand{\l}{\ell}
\newcommand{\eps}{\varepsilon}
\newcommand{\svgd}{\textsc{svgd}\@\xspace}
\newcommand{\adalipo}{\textsc{adalipo}\@\xspace}
\newcommand{\cma}{\textsc{cma-es}\@\xspace}
\newcommand{\woa}{\textsc{woa}\@\xspace}
\newcommand{\bayes}{\textsc{bayesopt}\@\xspace}
\newcommand{\mala}{\textsc{mala}\@\xspace}
\newcommand{\cbo}{\textsc{cbo}\@\xspace}
\newcommand{\sbs}{\textsc{sbs}\@\xspace}
\newcommand{\sbspf}{\textsc{sbs-pf}\@\xspace}
\newcommand{\sbshybrid}{\textsc{sbs-hybrid}\@\xspace}
\newcommand{\sbspfhybrid}{\textsc{sbs-pf-hybrid}\@\xspace}
\newcommand{\trace}{\mathrm{trace}}
\newcommand{\supp}{\mathrm{supp}}
\newcommand{\bd}{BD\@\xspace}
\newcommand{\almostall}[1]{
  \ifthenelse{\equal{#1}{}}{\overline{\forall}}{{\overline{\forall}}_{\hspace{-0.2em} #1}}
}
\newcommand{\algrule}[1][.2pt]{\par\vskip.5\baselineskip\hrule height #1\par\vskip.5\baselineskip}
\DeclarePairedDelimiter{\norm}{\lVert}{\rVert}
\DeclarePairedDelimiter{\abs}{|}{|}
\DeclarePairedDelimiter{\pare}{(}{)}
\DeclarePairedDelimiter{\bra}{\{}{\}}
\DeclarePairedDelimiter{\ket}{\langle}{\rangle}
\newcounter{marginNoteCounter}
\newcommand{\algcomment}[1] {\hfill{\color{gray} \scriptsize -~-~#1}}
\newcommand{\vs}         {vs.\@\xspace}%\@ifnextchar.{vs}{.}
\newcommand{\ie}         {i.e.\@\xspace}
\newcommand{\eg}         {e.g.\@\xspace}
\newcommand{\wrt}       {w.r.t.\@\xspace}
\newcommand\footnoteref[1]{\protected@xdef\@thefnmark{\ref{#1}}\@footnotemark}
\newcommand{\inlinetitle}[2] {\noindent\textbf{#1{#2}}}
\definecolor{mydarkblue}{rgb}{0,0.08,0.45}
\definecolor{mylightpurple}{rgb}{0.26, 0.32, 0.82}
\definecolor{mylightblue}{rgb}{0.26, 0.38, 0.93}
\begin{document}

\twocolumn[

\aistatstitle{Stein Boltzmann Sampling: A Variational Approach for Global Optimization}

\aistatsauthor{ Gaëtan Serré \And Argyris Kalogeratos \And Nicolas Vayatis }

\aistatsaddress{ Centre Borelli, École Normale Supérieure Paris-Saclay } ]

\begin{abstract}
We present a deterministic particle-based method for global optimization of continuous Sobolev functions, called \emph{Stein Boltzmann Sampling} (\sbs). \sbs initializes uniformly a number of particles representing candidate solutions, then uses the \emph{Stein Variational Gradient Descent} (\svgd) algorithm to sequentially and deterministically move those particles in order to approximate a target distribution whose mass is concentrated around promising areas of the domain of the optimized function. The target is chosen to be a properly parametrized Boltzmann distribution. For the purpose of global optimization, we adapt the generic \svgd theoretical framework for addressing more general target distributions over a compact subset of $\R^d$, and we prove \sbs's asymptotic convergence. In addition to the main \sbs algorithm, we present two variants:~the \sbspf that includes a particle filtering strategy, and the \sbshybrid one that uses \sbs or \sbspf as a continuation after other particle- or distribution-based optimization methods. An extensive comparison with state-of-the-art methods on benchmark functions demonstrates that \sbs and its variants are highly competitive, while the combination of the two variants provides the best trade-off between accuracy and computational cost.
\end{abstract}
\section{Introduction}
We consider the problem of global optimization of an unknown a priori nonconvex, continuous Sobolev function, under the concern of making efficient use of the computational budget, (\ie function evaluations at candidate minimizers). Optimizing an unknown function is a typical situation in real applications, \eg hyperparameter calibration or complex system design emerge in several domains (\eg \cite{Pinter1991, Lee2017}). 
For this, sequential methods are usually employed, where at each iteration the algorithm uses information extracted from the previous candidate solutions to propose new ones. Such methods rely on a deterministic or stochastic process to explore the search space, and on a selection process to choose the next candidate solutions given the previous ones.

In this work, we introduce a new sequential and deterministic particle-based method, called \emph{Stein Boltzmann Sampling} (\sbs), for continuous Sobolev functions. \sbs uses the \emph{Stein Variational Gradient Descent} (\svgd) \cite{Liu2016} method to sample from a target distribution whose mass is concentrated at areas of the domain where minimizers are possible to be found. We choose as target the Boltzmann distribution (\bd), which by definition converges toward a distribution with a support spanning over all minimizers of the optimized function. The idea of sampling from the \bd for approximating the minimizers of a function is not new (\eg \cite{Azencott1989, Debortoli2021}), yet utilizing \svgd for global optimization is novel, and therefore, part of our contribution concerns the adaptation of the generic \svgd theoretical framework to our objective. \svgd is a generic variational inference method that approximates a target distribution. Specifically, \svgd constructs a flow in the space of probability measures (similarly to a gradient flow evolving in $\R^d$) that moves toward the target distribution. In the discrete case, candidate solutions are represented by particles, and their updates that displace them are affected by attraction and repulsion forces. The \sbs optimization process is illustrated in \cref{fig:sbs-illustration} (some elements will be clarified in \cref{sec:sbs-variants}), where the sequence of updates over the candidate solutions are shown as trajectories of particles aiming to reach the global minimum. The pseudocode of the proposed \sbs method can be found in \cref{alg:sbs}.

The related global optimization literature is rich of methods. %Namely, 
\adalipo \cite{Malherbe2017} is a method that is consistent over Lipschitz functions and is adapted for a very low computational budget% 
. The well-known \bayes method \cite{Martinezcantin2014} is also adapted for low budgets. Then, there are approaches that use \mala to sample from the Boltzmann distribution, in a similar way to our method \cite{Grenander1994, Welling2011, Raginsky2017, Erdogdu2018}. \cma \cite{Hansen1996} and \woa \cite{Mirjalili2016}, are two inconsistent methods, but are known to be very efficient in practice. Due to either early stopping conditions or time complexity, these two methods do not scale well computationally, hence they are not suited for when the available budget is large and several function evaluations need to be performed. The recent method in \cite{Rudi2024} subsamples a finite subset of constraints from an uncountable one and uses an SDP solver to approximate the global minimum.

The rest of the contribution of the paper is as follows:~we provide a new proof of the \svgd convergence over a compact subset of $\R^d$ for a class of target distributions, which is more general than the one usually considered in the literature, and allows to show the asymptotic convergence of \sbs for any continuous Sobolev function (see \cref{sec:sbs-theory}). In the appendix, we provide detailed definitions and results of the \svgd theory, adapted to the context of global optimization. To ensure the correctness and reproducibility, for some technical results we provide links to proofs in the Lean proof assistant \cite{Moura2021, Mathlib2020}. Then, we introduce two \sbs variants: one that uses particle filtering to reduce the budget needed (see \cref{fig:sbs-pf}), and a hybrid one that uses \sbs as a continuation of \cma or \woa, to combine their efficiency with the consistency and scalability of our method (see \cref{sec:sbs-variants}). We discuss the optimal values for the hyperparameters of \sbs and compare our approaches with five state-of-the-art methods on several standard global optimization benchmark functions (see \cref{sec:hyperparameters,sec:benchmark}). Finally, we interpret, in the global optimization context, the internal attraction and repulsion forces between particles, which come in effect during the \svgd sampling (see \cref{sec:discussions}).

\inlinetitle{Notations}{.}~% 
$d \in \N$ is the dimension of the optimization problem; $f : \Omega \to \R$ is the function to optimize, its domain $\Omega \subset \R^d$ is a smooth, connected and compact set; $x^* \in X^*$ is one of the global minimizers of $f$, \ie $\forall x^* \in X^*$, $f^* = f(x^*)$. Given an arbitrary function $f$, its support is $\supp(f) = \left\{x \in \Omega \; \middle| \; f(x) \neq 0 \right\}$. Let $\lambda$ be the standard Lebesgue measure on the Borel sets of $\R^d$. We denote by $C^p$ the set of $p$-times continuously differentiable functions, and by $C^\infty_c(\Omega)$ the set of smooth functions on $\Omega$ that have compact support. Given two measurable spaces $(\Omega_1, \Sigma_1)$ and $(\Omega_2, \Sigma_2)$, a measurable function $f : \Sigma_1 \to \Sigma_2$ and a measure $\mu$ over $\Sigma_1$, let $f_\# \mu$ denote the pushforward measure, \ie%
$$
\forall B \in \Sigma_2, f_\# \mu(B) = \mu(f^{-1}(B)).
$$%
For $m,p \in \N$, let $W^{m, p}$ be the Sobolev space of functions with $m$ weak derivatives in $L_\mu^p(\Omega)$:
\begin{align*}
W^{m, p} \triangleq \{& f \in L_\mu^p(\Omega) \; \\
&| \; \forall \alpha \in \N^d, |\alpha| \leq m, D^\alpha f \in L_\mu^p(\Omega) \},
\end{align*}
where $D^\alpha$ is the weak derivative operator w.r.t. to the multi-index $\alpha$, and $\mu$ is clear from context. Let the Hilbert space $H^m$ be the Sobolev space $W^{m, 2}$.

The full list of notations is provided in the appendix (see \cref{tab:notations}).

\begin{algorithm}[t]
  \footnotesize
  \caption{\footnotesize Stein Boltzmann Sampling (\sbs)}
  \label{alg:sbs}
  \begin{algorithmic}
    \STATE {\bfseries Input:} $f : \Omega \to \R$; number of vectors (particles) $N$; Boltzmann parameter $\kappa$; step-size $\eps$; number of \svgd iterations $n$; an initial distribution $\mu_0$ over the particles
    \STATE {\bfseries Output:} $\hat{x}$, an estimate of $x^*$
    \algrule
    \STATE Sample $N$ particles: $X_1 \leftarrow \big(x^{(1)}, ..., x^{(N)} \big)\!\sim\!\mu_0^{\otimes N}$\!\!\!
    \FOR{$i=1$ {\bfseries to} $n$}
    \STATE Compute the vector field $\phi^\star_{\hat{\mu}_i}$ \algcomment{see \cref{sec:sbs-method}}
    \STATE $X_{i+1} \leftarrow X_i + \eps \phi^\star_{\hat{\mu}_i}(X_i)$  \algcomment{update the particles}
    \STATE $\hat{\mu}_{i+1} \leftarrow \frac{1}{N} \sum_{j=1}^N \delta_{X_{i + 1}^{(j)}}$ \algcomment{empirical measure}
    \ENDFOR
    \STATE $\hat{x} \leftarrow \argmin{}_{1 \leq j \leq N} f\left(X_{n+1}^{(j)}\right)$ \algcomment{the "best" particle}
    \STATE {\bfseries return} $\hat{x}$
  \end{algorithmic}
\end{algorithm}

\begin{algorithm}[t]
  \footnotesize
  \caption{\footnotesize Initialization choice of \sbshybrid}
  \label{alg:sbs-hybrid-init}
  \begin{algorithmic}
    \STATE {\bfseries Input:} number of vectors (particles) $N$;\\ {\sc cma-es} budget $b$%
    \STATE {\bfseries Output:} $N$ candidates
    \algrule
    \STATE Run {\sc cma-es} for $b$ function evaluations
    \STATE Run {\sc woa} with $N$ candidates
    \IF {{\sc cma-es} found a better value than {\sc woa}}
    \STATE Sample $N$ candidates from the last Gaussian
    \ELSE
    \STATE Use the $N$ candidates from {\sc woa}
    \ENDIF
    \STATE {\bf return} the $N$ candidates
  \end{algorithmic}
\end{algorithm}

\begin{figure*}[t]
  \centering
  \begin{subfigure}{0.26\textwidth}
    \centering
    \includegraphics[width=\textwidth]{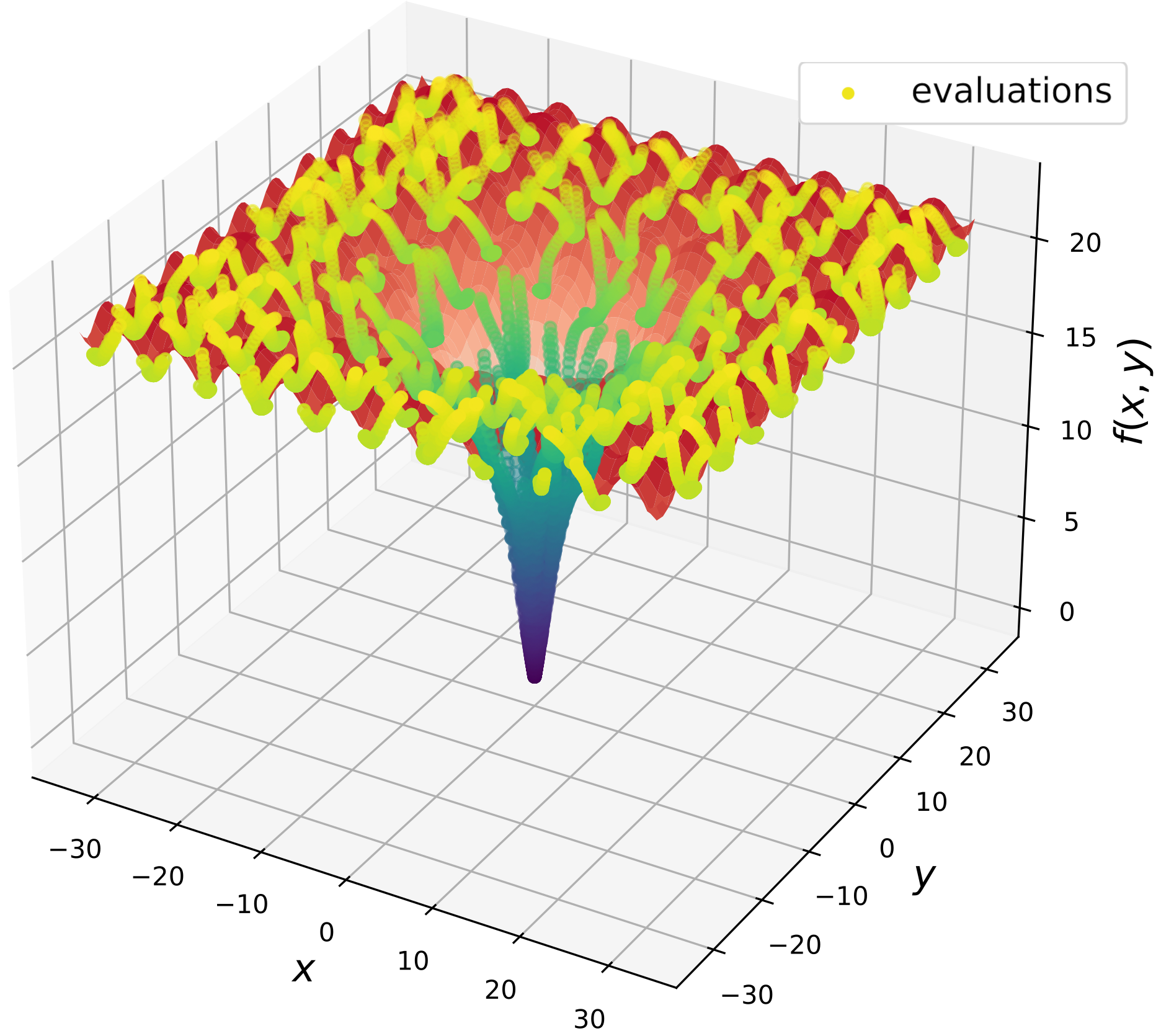}
    \caption{\sbs}
  \end{subfigure}
  \hspace{7em}
  \begin{subfigure}{0.26\textwidth}
    \centering
    \includegraphics[width=\textwidth]{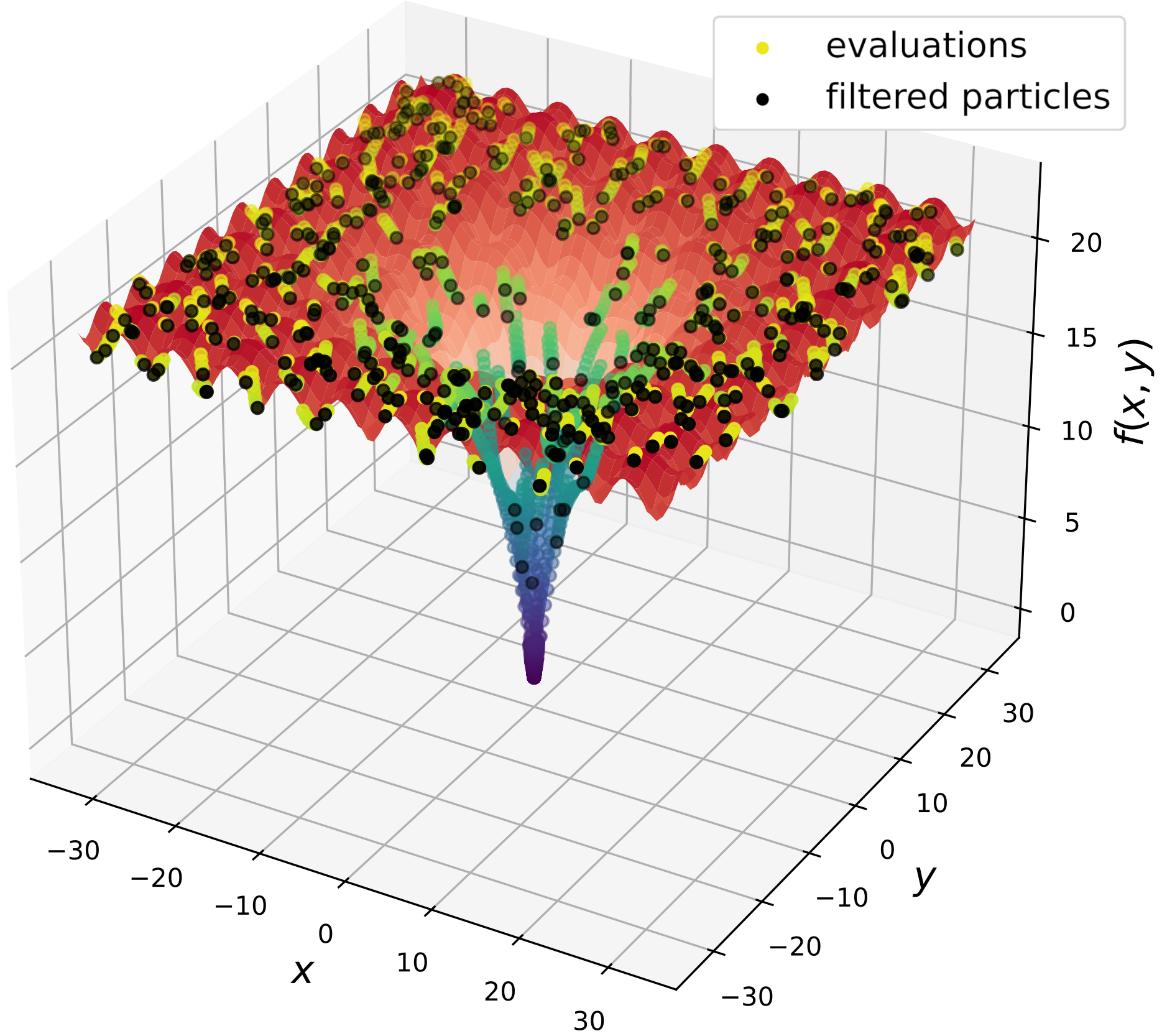}
    \caption{\sbspf}
    \label{fig:sbs-pf}
  \end{subfigure}
		\vspace{-0.2em}
  \caption{Illustration of the flow of measures and the trajectories of particles over the iterations. The color gradient represents the $2$d Ackley function value, from {\color[RGB]{58, 76, 192} blue} (low) to {\color[RGB]{179, 3, 38} red} (high). The trajectories draw the discretized flow of measures. \textbf{a)}~\sbs: the particles are initialized uniformly at random over the domain, and then get updated by making a small step in the direction induced by \svgd forces. \textbf{b)}~\sbspf variant with particle filtering: the particles are initialized and updated as before, but the less promising ones get rapidly removed and are not replaced. This is visible as there are less persisting trajectories in areas where the function has high values. This strategy results in a significant reduction of the budget while having comparable performance.
  }
  \label{fig:sbs-illustration}
\end{figure*}

\section{Stein Boltzmann Sampling}\label{sec:sbs-method}
Let us now introduce the proposed \emph{Stein Boltzmann Sampling} (\sbs) method.
While \emph{Stein Variational Gradient Descent} (\svgd) has been thoroughly studied in the literature \cite{Liu2017, Lu2019, Korba2020, Duncan2023, Sun2023}, this work is the first to consider it in a global optimization context. Therefore, part of the contribution of this work is that adaptation of the \svgd theoretical framework so as to be suitable for global optimization, and to allow addressing more general target distributions over $\Omega$. For consistency and completeness, we prove classical results in our adapted framework in \cref{app:svgd}.

Given the initial particles $(X^{(i)}_0)_{1 \leq i \leq N} \in \Omega^N$, \svgd constructs an update direction in order to move them toward a target distribution $\pi$. This gives the following differential equation for the particles:
\begin{equation}\label{eq:svgd-diff-eq}
  \begin{split}
    \frac{\diff X^{(i)}_t}{\diff t} = \frac{1}{N} \sum_{j=1}^{N} &\nabla \log \pi\left(X^{(j)}_t \right) k\left(X^{(i)}_t, X^{(j)}_t\right) \\
    &+ \nabla_{X^{(j)}_t} k\left(X^{(i)}_t, X^{(j)}_t\right),
  \end{split}
\end{equation}
where $k$ is the reproducing kernel of a specific RKHS $\H$ (see \cref{app:svgd} for more details). A usual choice for $k$ is the Gaussian kernel with bandwidth $\sigma$. An illustration of this equation is given in \cref{fig:vector-field}. The forces driving the particles are determined by a mixture of individual and collective information. A deep analysis of particle-based models for a large number of particles is exceedingly complex, sometimes even impossible. A popular workaround is to study the convergence of the distribution of the particles at time $t$, that describes their evolution \cite{Liu2017,Korba2020,Pinnau2017,Carrillo2018,Fornasier2021}. For deterministic methods, passing to the distribution is simply an application of the law of large numbers, while for stochastic methods it utilizes tools from the mean-field theory. At time $t$, the update direction $\mu_t$ for the particles distribution is given by:
\begin{equation}\label{eq:svgd-update}
  \phi_{\mu_t}^\star \triangleq \int_\Omega \nabla \log \pi(x) k(\cdot, x) + \nabla_x k(\cdot, x) \dif \mu_t,
\end{equation}
where the gradient operator is understood in the distributional sense. Moreover, in the classical \svgd literature, the sequence $\mu_{n + 1} \triangleq (I_d + \eps \phi_{\mu_n}^\star)_\# \mu_n$ is also studied (e.g. \cite{Liu2016, Korba2020}).

To use \svgd as a global optimization method, we need a target distribution that concentrates its mass around the global minimizers of the optimized function, and the continuous Boltzmann distribution (\bd) has this feature%meets this requirement
. Moreover, it is a classical object in the global optimization theory, and makes a link between our method Simulated Annealing \cite{Kirkpatrick1983} (see \cref{sec:discussions}).
\begin{definition}[Continuous Boltzmann distribution]\label{def:boltzmann-distribution}
Given a function $f \in C^0(\Omega, \R)$, the Boltzmann distribution over $\Omega$ is induced by the probability density function $m_{f, \Omega}^{(\kappa)} : \Omega \to \R_{\geq 0}$ defined by:%
  \begin{equation}\label{eq:BD}
  m_{f, \Omega}^{(\kappa)}(x) = m^{(\kappa)}(x) = \frac{e^{-\kappa f(x)}} { \int_\Omega e^{-\kappa f(t)} \dif t}, \ \ \forall \kappa \in \R_{\geq 0}.%
  \end{equation}
\end{definition}%
A characteristic property of the \bd is that, as $\kappa$ tends to infinity, %it 
the \bd tends to a distribution supported only over the set of minimizers $X^*$. If $\lambda(X^*) > 0$, the \bd tends to a uniform distribution over $X^*$ (see \cref{fig:boltzmann-distribution}). If $\lambda(X^*) = 0$, it tends to a distribution over $X^*$, where the concentration of the mass depends on the local geometry of the minimizing manifold \cite{Hwang1980} (see details in \cref{sec:boltzmann}).

The proposed \sbs method is essentially an \svgd sampler applied to the \bd. Note that any distribution whose density has the characteristic of being asymptotically supported only over $X^*$ could be used as a target distribution in the following theoretical results.

\begin{figure*}[t]
  \centering
  \begin{subfigure}{0.35\textwidth}
    \centering
    \includegraphics[width=\textwidth]{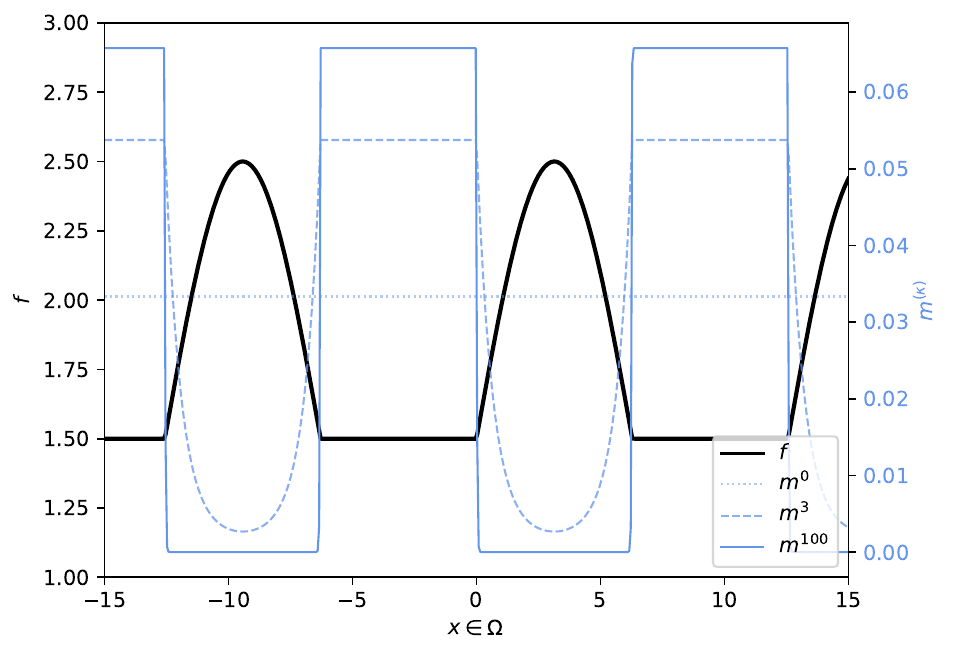}%
    \caption{}
    \label{fig:boltzmann-distribution}
  \end{subfigure}
  \begin{subfigure}{0.35\textwidth}
    \centering
    \includegraphics[width=\textwidth]{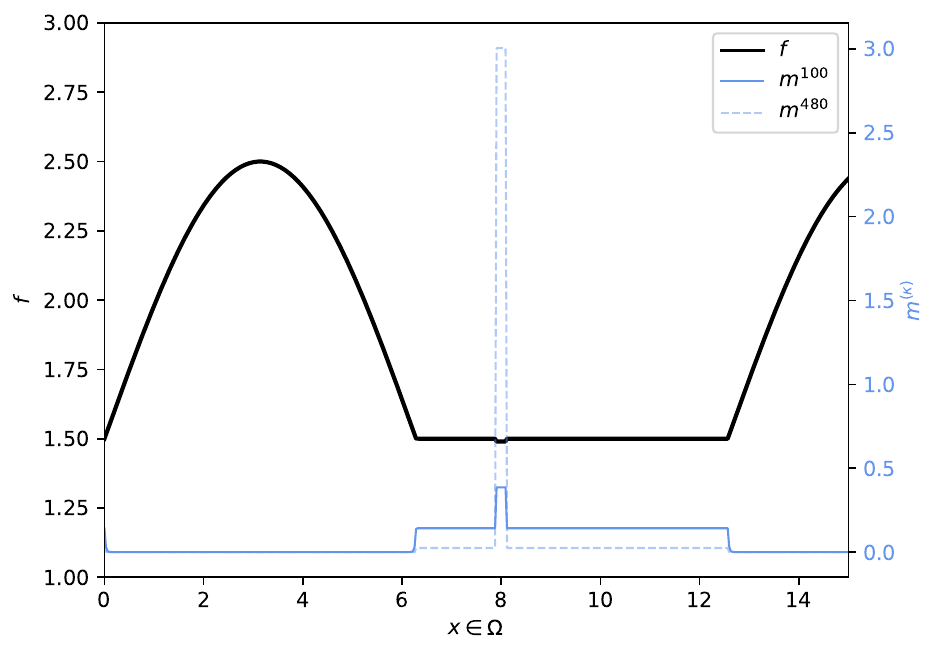}
    \caption{}
    \label{fig:boltzmann-distribution-case}
  \end{subfigure}
	\vspace{-0.2em}
    \caption{\textbf{a)}~The density of the Boltzmann distribution $m^{(\kappa)}$ (\cref{def:boltzmann-distribution}) ({\bf \color[HTML]{6495ed} blue} lines) becomes uniform over the set of minimizers $X^*$ of the given function $f$ to optimize ({\bf black} lines), as its parameter $\kappa$ tends to infinity. \textbf{b)}~In this example, the volume of the set $X^*$ is much smaller than the volume of local minimizers in the flat region. The value of the function at the local minimizers is also closer to the value of the global ones. Setting $\kappa$ to $100$ does not suffice to concentrate the majority of the mass of $m^{(\kappa)}$ around the global minimizers.}
    \label{fig:ex-choice-kappa}
\end{figure*}

\section{Theory of SBS}\label{sec:sbs-theory}
First, let $\cal{P}_n(\Omega)$ be the set of probability measures on $\Omega$ such that for each element $\mu \in \cal{P}_n(\Omega)$:
$$
\mu \ll \lambda \; \land \; \mu(\cdot) \in W^{1, n}(\Omega) \; \land \; \supp(\mu(\cdot)) = \Omega,
$$%
where $\mu(\cdot) : \Omega \to \R_{\geq 0}$ is the density of the measure $\mu$ \wrt $\lambda$. To prove the asymptotic convergence of \sbs, we need to prove that the measures constructed using \cref{eq:svgd-update} converges to the distribution induced by the \bd, noted as $\pi$. To do so, we need to study the net of measures induced by the update direction of \svgd, noted $(\mu_t)_{t \in \R_{\geq 0}}$. To use theoretical results of our adapted \svgd framework, we need to ensure $\mu$ and $\pi$ belongs to $\Po$. For the latter, we assume that $f$ is in $C^0(\Omega) \cap W^{1, 4}(\Omega)$ so that $m^{(\kappa)}$ is in $H^1(\Omega)$ (see proof in \cref{app:proof-sobolev_bd}). We prove the weak convergence of the net $(\mu_t)_{t \in \R_{\geq 0}}$ to $\pi$ in the following theorem.
\begin{theorem}[Weak convergence of \svgd]\label{theorem:weak-convergence}
  Let $\mu, \pi \in \Po$. Let $(T_t)_{0 \leq t} : \Omega \to \Omega$ be a locally Lipschitz family of diffeomorphisms, representing the trajectories associated with the vector field $\phi^\star_{\mu_t}$ (see \cref{eq:svgd-update}), such that $T_0 = I_d$. Let $\mu_t = {T_t}_\#\mu$. Then,
  $
  \mu_t \xrightharpoonup[t]{} \pi
  $.
\end{theorem}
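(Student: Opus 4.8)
The plan is to run the classical energy–dissipation argument for the \svgd flow, adapted to the compact domain $\Omega$, and then to promote mere integrability of the dissipation into convergence of the whole net via a Barb\u{a}lat-type argument. First I would record the continuity equation: since by hypothesis $T_t$ is the flow of the time-dependent field $\phi^\star_{\mu_t}$, i.e. $\diff_t T_t(x)=\phi^\star_{\mu_t}(T_t(x))$ with $T_0=I_d$ and $T_t(\Omega)\subseteq\Omega$, the push-forwards $\mu_t={T_t}_\#\mu$ solve $\diff_t\mu_t+\nabla\cdot(\mu_t\,\phi^\star_{\mu_t})=0$ in the distributional sense, with no flux through $\partial\Omega$ since the flow never exits $\Omega$. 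Along the way one must check that the $\mu_t$ stay in the density class used in \cref{app:svgd} (absolutely continuous, Sobolev density, full support), which follows from $\mu\in\Po$, the diffeomorphism property of $T_t$, and local Lipschitzness; in particular $\KL(\mu_0\|\pi)=\KL(\mu\|\pi)<\infty$ using that $\pi$ has a density bounded away from $0$ on the compact $\Omega$ and $\mu$ has an $L^2$ density.

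The core step is the dissipation identity $\frac{\dif}{\dif t}\KL(\mu_t\|\pi)=-\mathbb{S}(\mu_t,\pi)^2\le 0$, where $\mathbb{S}(\mu,\pi)=\norm{\phi^\star_\mu}_{\H^d}$ is the kernel Stein discrepancy. I would obtain it by differentiating $\KL(\mu_t\|\pi)=\int\log(\mu_t/\pi)\dif\mu_t$ under the integral sign, substituting the continuity equation, integrating by parts (the $\partial\Omega$ term vanishing by the no-flux property) to reach $\int\ket{\nabla\log(\mu_t/\pi),\,\phi^\star_{\mu_t}}\dif\mu_t$, and then invoking Stein's identity together with the reproducing property of $k$ to identify this quantity with $-\norm{\phi^\star_{\mu_t}}_{\H^d}^2$ — this is exactly the steepest-descent characterization of $\phi^\star_{\mu_t}$ recalled in \cref{app:svgd}. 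Integrating in $t$ and using that $\KL(\mu_t\|\pi)$ is nonincreasing and nonnegative yields $\int_0^\infty\mathbb{S}(\mu_s,\pi)^2\dif s\le\KL(\mu\|\pi)<\infty$.

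Next I would upgrade this to the pointwise limit $\mathbb{S}(\mu_t,\pi)\to 0$ by Barb\u{a}lat's lemma, for which I need $t\mapsto\mathbb{S}(\mu_t,\pi)^2$ uniformly continuous on $[0,\infty)$. Two facts give this. (i) The velocity field is uniformly bounded: writing $\mathbb{S}(\mu_t,\pi)^2=\iint h_\pi\dif\mu_t\dif\mu_t$ with $h_\pi$ the Stein kernel (bounded and continuous on the compact $\Omega\times\Omega$ under the regularity of $k$ and $\pi$ of \cref{app:svgd}), $\mathbb{S}(\mu_t,\pi)$ is uniformly bounded, and the reproducing property gives $\norm{\phi^\star_{\mu_t}(x)}\le\mathbb{S}(\mu_t,\pi)\sqrt{k(x,x)}\le C$; hence, via the coupling $(T_s,T_t)_\#\mu$, $W_2(\mu_s,\mu_t)\le C\abs{s-t}$. (ii) The map $\nu\mapsto\mathbb{S}(\nu,\pi)^2=\iint h_\pi\dif\nu\dif\nu$ is continuous for the weak topology, hence uniformly continuous on the compact metric space $(\cal{P}(\Omega),W_2)$ — recall $\Omega$ compact gives $\Po=\cal{P}(\Omega)$ with $W_2$ metrizing weak convergence. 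Composing (i) and (ii), $t\mapsto\mathbb{S}(\mu_t,\pi)^2$ is uniformly continuous and integrable on $[0,\infty)$, so $\mathbb{S}(\mu_t,\pi)\to 0$. Finally, under the assumptions on $k$ in \cref{app:svgd} (satisfied by the Gaussian kernel), $\mathbb{S}(\cdot,\pi)$ separates measures: $\mathbb{S}(\nu,\pi)=0\iff\nu=\pi$. Since $\bra{\mu_t}_{t\ge 0}\subseteq\cal{P}(\Omega)$ is relatively weakly compact, any weak limit $\nu$ of a subnet satisfies $\mathbb{S}(\nu,\pi)=\lim\mathbb{S}(\mu_t,\pi)=0$ by continuity of $\mathbb{S}(\cdot,\pi)$, whence $\nu=\pi$; as $\pi$ is the unique subnet limit, the whole net converges, $\mu_t\xrightharpoonup[t]{}\pi$.

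I expect the main obstacle to be making the dissipation identity fully rigorous at this level of generality: differentiating $\KL$ under the integral sign with only Sobolev regularity of the densities, controlling the boundary term on $\partial\Omega$, and justifying Stein's identity and the RKHS manipulations — and, upstream of all this, verifying that the flow $(T_t)_{t\ge 0}$ genuinely keeps $\mu_t$ in the density class for which those identities are established in \cref{app:svgd}. The Barb\u{a}lat step is the reason one gets the entire net rather than just a subsequence, but given the uniform bound on $\phi^\star_{\mu_t}$ it is comparatively routine.
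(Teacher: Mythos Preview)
Your argument is correct and shares the paper's backbone --- the dissipation identity $\frac{\dif}{\dif t}\KL(\mu_t\|\pi)=-\K(\mu_t\,|\,\pi)$ (\cref{theorem:time-derivative-kl}), weak compactness of $\cal{P}(\Omega)$, and the characterization $\K(\nu\,|\,\pi)=0\iff\nu=\pi$ (\cref{lemma:ksd-valid-discrepancy}) --- but the identification of the limit is handled differently. The paper never shows that $\K(\mu_t\,|\,\pi)\to 0$ along the full net; instead it extracts a weakly convergent subsequence, argues (tersely, via continuity of $\K(\cdot\,|\,\pi)$) that its limit $\mu_\infty$ must be a \emph{fixed point of the flow} $\Phi_t$, and then invokes \cref{lemma:fixed-point} (uniqueness of that fixed point) to conclude $\mu_\infty=\pi$. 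You instead prove $\K(\mu_t\,|\,\pi)\to 0$ as $t\to\infty$ via Barb\u{a}lat's lemma, using the uniform bound on $\phi^\star_{\mu_t}$ to get $W_2$-Lipschitz continuity of $t\mapsto\mu_t$ and uniform continuity of the KSD on the compact $(\cal{P}(\Omega),W_2)$; you then apply \cref{lemma:ksd-valid-discrepancy} directly to any subnet limit. Your route is more self-contained and arguably more rigorous --- the paper's passage from ``$\KL$ converges'' to ``$\mu_\infty$ is a fixed point of $\Phi_t$'' is left largely implicit --- while the paper's is shorter because it packages the endgame into the flow-level \cref{lemma:fixed-point}. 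One small caveat in your write-up: you state $\Po=\cal{P}(\Omega)$, but in fact $\Po\subsetneq\cal{P}(\Omega)$; when you apply the separation property to a weak limit $\nu\in\cal{P}(\Omega)$ you are tacitly assuming it extends beyond $\Po$, a gap the paper's proof shares.
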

The proof is in \cref{app:proof-weak-convergence} and is inspired by the proof of \cite[Theorem 2.8]{Lu2019}. It relies on \cref{theorem:time-derivative-kl}, a known result of the literature, and \cref{lemma:fixed-point,lemma:ksd-valid-discrepancy}, two original lemmas.
\begin{lemma}[KSD valid discrepancy]\label{lemma:ksd-valid-discrepancy}
  Let $\mu$, $\pi$ $\in \Po$, and $\K$ a discrepancy measure defined in \cref{app:svgd}. Then,
  $
  \mu = \pi \iff \K(\mu | \pi) = 0
  $.
\end{lemma}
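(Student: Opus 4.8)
The plan is to work from the definition of the kernelized Stein discrepancy given in \cref{app:svgd}, namely $\K(\mu\,|\,\pi) = \norm{\phi^\star_\mu}_{\H^d}^2$, where $\phi^\star_\mu$ is the Stein vector field attached to the pair $(\mu,\pi)$ as in \cref{eq:svgd-update}. Since the RKHS norm is positive definite and $\phi^\star_\mu$ is a continuous function of its argument, $\K(\mu\,|\,\pi)=0$ is equivalent to $\phi^\star_\mu\equiv 0$ on $\Omega$. It therefore suffices to prove the chain of equivalences $\mu=\pi \iff \phi^\star_\mu\equiv 0 \iff \K(\mu\,|\,\pi)=0$, and the work is entirely in the first one.

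For the implication $\mu=\pi \Rightarrow \K(\mu\,|\,\pi)=0$, I would substitute $\mu=\pi$ into \cref{eq:svgd-update}: writing $\nabla\log\pi(x)\,\pi(x)=\nabla\pi(x)$, the integrand becomes $\nabla\pi(x)\,k(y,x)+\pi(x)\nabla_x k(y,x)=\nabla_x\big(\pi(x)k(y,x)\big)$, so $\phi^\star_\pi(y)=\int_\Omega \nabla_x\big(\pi(x)k(y,x)\big)\dif x$. By the integration-by-parts (divergence) identity valid in our adapted framework this integral vanishes, the boundary term being zero because $\Omega$ has smooth boundary and $k$ is taken in the boundary-adapted Stein class of \cref{app:svgd}. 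Hence $\phi^\star_\pi\equiv 0$ and $\K(\pi\,|\,\pi)=0$.

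For the converse, assume $\phi^\star_\mu\equiv 0$. Because $\mu,\pi$ have full support and Sobolev densities (and $\pi(\cdot)$, being continuous and positive on the compact set $\Omega$, is bounded below), the ratio $r\triangleq \mu(\cdot)/\pi(\cdot)$ is weakly differentiable with $\nabla r\in L^1(\Omega)$. Writing $\nabla\log\pi(x)\,\mu(x)=r(x)\nabla\pi(x)$ and integrating the first term of \cref{eq:svgd-update} by parts (again with vanishing boundary term), the $\nabla_x k$ contributions cancel and one is left with $0=\phi^\star_\mu(y)=-\int_\Omega k(y,x)\,\nabla r(x)\,\pi(x)\dif x = -\int_\Omega k(y,x)\dif\nu(x)$ for every $y\in\Omega$, where $\nu$ is the finite $\R^d$-valued measure with $\lambda$-density $x\mapsto \pi(x)\nabla r(x)$. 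Applying the integral strict positive definiteness (equivalently, $C_0$-universality) of $k$ on $\Omega$ coordinate by coordinate forces $\nu=0$, hence $\pi(x)\nabla r(x)=0$ and thus $\nabla r=0$ $\lambda$-a.e. Since $\Omega$ is connected, $r$ equals a constant $c$ $\lambda$-a.e.; integrating $\mu(\cdot)=c\,\pi(\cdot)$ over $\Omega$ and using that both are probability densities gives $c=1$, so $\mu=\pi$.

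The main obstacle I anticipate is not the connectedness/constant argument (routine) but the two places where the compact domain intervenes: (i) justifying the two integrations by parts with a genuinely vanishing boundary term — this is exactly what the adapted \svgd Stein class in \cref{app:svgd} is set up to guarantee, and it is the reason only kernels compatible with $\partial\Omega$ are admissible; and (ii) passing from the scalar characteristic/universal property of $k$ to injectivity of the embedding $\nu\mapsto\int_\Omega k(\cdot,x)\dif\nu$ on $\R^d$-valued measures, together with the regularity bookkeeping (Sobolev densities, $\pi$ bounded below) needed to legitimately rewrite $\phi^\star_\mu$ as $-\int_\Omega k(\cdot,x)\dif\nu(x)$. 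Both are supplied by the results collected in \cref{app:svgd}.
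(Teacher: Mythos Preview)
Your proposal is correct and follows essentially the same route as the paper. Both directions hinge on the identity that $\phi^\star_\mu$ equals the kernel integral operator applied to the (weighted) gradient of the density ratio, together with integral strict positive definiteness of $k$; the paper phrases the reverse direction via the quadratic form $\K(\mu\,|\,\pi)=\iint \nabla\log\tfrac{\pi}{\mu}(x)^\top k(x,x')\nabla\log\tfrac{\pi}{\mu}(x')\dif\mu(x)\dif\mu(x')$ and then concludes $\nabla\log\tfrac{\pi}{\mu}=0$ a.e., whereas you integrate by parts to get the mean-embedding form $\phi^\star_\mu(\cdot)=-\int k(\cdot,x)\,\pi(x)\nabla r(x)\dif x$ and invoke injectivity coordinatewise --- these are two equivalent readings of the same kernel property (note $\pi\nabla r=-\mu\,\nabla\log\tfrac{\pi}{\mu}$), and both finish with the same connectedness/normalisation step.
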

This result has been stated in \cite{Liu2016} without further details. We provide a formalized proof is in \cref{app:proof-ksd-valid-discrepancy}.
This lemma implies directly that $\pi$ is the unique fixed point of the flow of measures constructed by \svgd.
\begin{lemma}[Unique fixed point]\label{lemma:fixed-point}
  Let $\pi \in \Po$ and $\Phi$ be the flow of measures induced by the net $(\mu_t)_{t \in \R_{\geq 0}}$ (see in \cref{theorem:time-derivative-measure-net}). Then, for any $t \geq 0$, $\pi$ is the unique fixed point of $(\mu : \Po) \mapsto \Phi_t(\mu)$.
\end{lemma}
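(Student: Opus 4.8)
The plan is to obtain this lemma directly from two facts available from the adapted \svgd framework: the dissipation identity of \cref{theorem:time-derivative-kl}, which along the net $(\mu_t)$ reads $\frac{\mathrm{d}}{\mathrm{d}t}\KL(\mu_t | \pi) = -\K(\mu_t | \pi)$, and the definiteness of the discrepancy from \cref{lemma:ksd-valid-discrepancy}, $\K(\nu | \pi) = 0 \iff \nu = \pi$. Since $\K(\cdot | \pi) \ge 0$, the first fact makes $t \mapsto \KL(\mu_t | \pi)$ non-increasing, and this monotonicity is the engine of the whole argument.

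First I would check that $\pi$ is indeed a fixed point: initializing the net of \cref{theorem:time-derivative-measure-net} at $\mu_0 = \pi$ gives $\KL(\mu_0 | \pi) = 0$, and a non-negative, non-increasing quantity that starts at $0$ stays at $0$, so $\KL(\mu_t | \pi) = 0$ for all $t \ge 0$, hence $\mu_t = \pi$ for all $t$ by definiteness of the KL divergence; in particular $\Phi_t(\pi) = \pi$. For uniqueness, fix $t > 0$ (for $t = 0$ the map $\Phi_0$ is the identity, so the statement has to be read for positive times) and suppose $\Phi_t(\mu) = \mu$ for some $\mu \in \Po$. Running the net from $\mu_0 = \mu$ yields $\mu_t = \Phi_t(\mu) = \mu = \mu_0$, hence $\KL(\mu_t | \pi) = \KL(\mu_0 | \pi)$; a non-increasing function with equal endpoints on $[0,t]$ is constant there, so its right derivative at $0$ vanishes, and \cref{theorem:time-derivative-kl} then forces $\K(\mu_0 | \pi) = 0$. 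By \cref{lemma:ksd-valid-discrepancy}, $\mu = \mu_0 = \pi$, which is the claim.

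The main obstacle is not the logic above but the verification that the cited results legitimately apply all along the trajectory. I must confirm that the flow $(T_t)$ keeps each pushforward $\mu_t = (T_t)_\#\mu$ inside the class $\cal{P}_n(\Omega) \subseteq \Po$ on which \cref{theorem:time-derivative-kl} and \cref{lemma:ksd-valid-discrepancy} are established --- preservation of absolute continuity, of the Sobolev regularity of the density, and of the full support --- which is exactly what the locally-Lipschitz-diffeomorphism hypothesis, together with the $H^1$ regularity of $\pi = m^{(\kappa)}$, is meant to ensure; and I must make sure $t \mapsto \KL(\mu_t | \pi)$ is absolutely continuous with the stated derivative up to and including $t = 0$, so that ``constant $\KL$ on $[0,t]$'' truly gives $\K(\mu_0 | \pi) = 0$ rather than only $\K(\mu_s | \pi) = 0$ for almost every $s$. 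If only the almost-everywhere statement were available, I would recover the conclusion by choosing $s_n \downarrow 0$ with $\K(\mu_{s_n} | \pi) = 0$, hence $\mu_{s_n} = \pi$ by \cref{lemma:ksd-valid-discrepancy}, and passing to the limit via weak continuity of $s \mapsto (T_s)_\#\mu$ at $s = 0$ together with closedness of $\{\nu \in \Po : \nu = \pi\}$. Compactness of $\Omega$ and the presence of its boundary are what make these regularity checks delicate, but that difficulty is already handled inside the adapted \svgd results of \cref{app:svgd}.
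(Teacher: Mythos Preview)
Your proposal is correct and rests on the same two ingredients as the paper --- the dissipation identity of \cref{theorem:time-derivative-kl} and the definiteness of $\K$ from \cref{lemma:ksd-valid-discrepancy} --- but the execution differs in both halves. For the claim that $\pi$ is fixed, the paper argues directly on the vector field: from $\K(\pi\,|\,\pi)=\lVert\phi_\pi^\star\rVert_\H^2=0$ it concludes $\phi_\pi^\star=0$ $\pi$-a.e., hence $T_t=I_d$ and $\Phi_t(\pi)=\pi$. Your Lyapunov argument via $\KL(\mu_t\,\|\,\pi)\equiv 0$ is equally valid but less direct, since it appeals to the definiteness of $\KL$ rather than to the transport map itself. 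For uniqueness, your version is actually sharper: the paper assumes $\Phi_t(\nu)=\nu$ for \emph{all} $t\ge 0$ and then reads off $\partial_t\KL=0$ immediately, whereas you work with a single $t>0$ and use the equal-endpoints/monotonicity argument to force $\K(\mu_0\,|\,\pi)=0$. You also flag the $t=0$ degeneracy ($\Phi_0=I_d$), which the paper's statement glosses over. The regularity caveats you raise --- invariance of $\Po$ under the flow and absolute continuity of $t\mapsto\KL(\mu_t\,\|\,\pi)$ --- are implicitly absorbed into the paper's \cref{theorem:time-derivative-kl} as well, so your concerns are appropriate but not obstacles beyond what the paper already assumes.
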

Since $\K(\mu | \pi) = \left\lVert \phi_\mu^\star \right\rVert^2_\H$ (see \cref{app:svgd}), the proof is straightforward using the previous lemma. The complete proof is in \cref{app:proof-fixed-point}.
\begin{figure*}[t]
%\vspace{-1em}
  \centering
  \begin{subfigure}{0.3\textwidth}
    \includegraphics[width=0.8\textwidth]{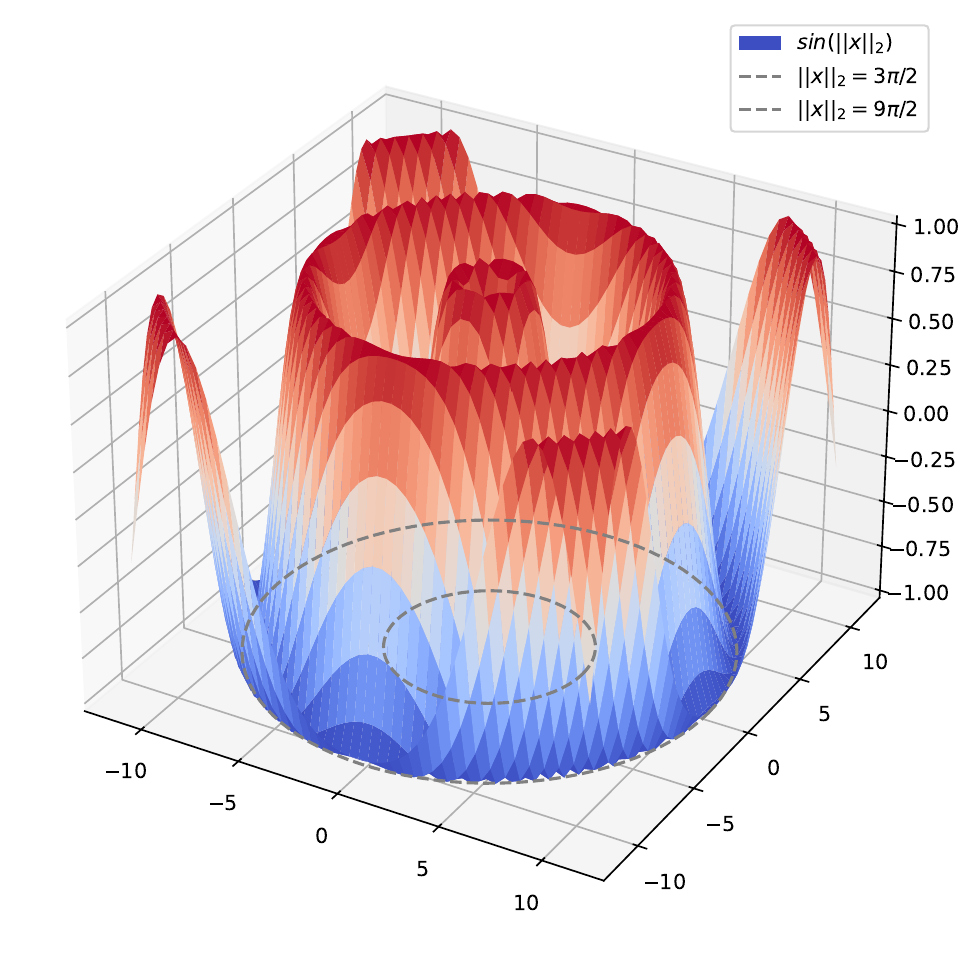}
    \vspace{-0.8em}
    \caption{$x \mapsto \sin \norm{x}_2$}
  \end{subfigure}
  \hspace{1em}
  %\hfill
  \begin{subfigure}{0.3\textwidth}
    \includegraphics[width=0.8\textwidth]{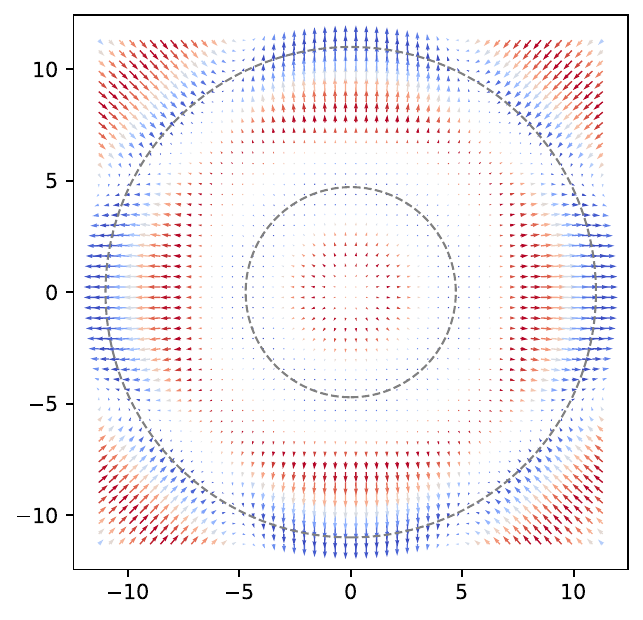}
    \vspace{-0.8em}
    \caption{$t = 0$}
  \end{subfigure}
  \hspace{1em}
  %\hfill
  \begin{subfigure}{0.3\textwidth}
    \includegraphics[width=0.8\textwidth]{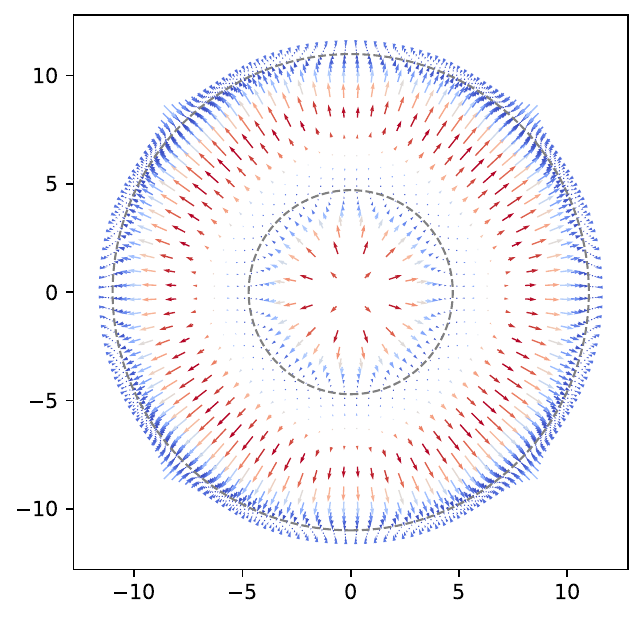}
    \vspace{-0.8em}
    \caption{$t = 700$}
  \end{subfigure}
  \vspace{-0.5em}
  \caption{Illustration of the vector field induced by \cref{eq:svgd-diff-eq} in a discrete-time setting where $\pi$ is the \bd. \textbf{a)}~The optimized function $x \mapsto \sin \norm{x}_2$ and the two manifolds at which it is minimized (dashed {\bf \color[HTML]{808080} gray} lines). \textbf{b)}~The initial particles (not shown) start getting attracted toward the two ring-shaped manifolds. \textbf{c)} After some \svgd iterations, there are stronger forces in the vector field and the particles get concentrated around those minimizing regions.}
  \label{fig:vector-field}
\end{figure*}

\subsection*{Asymptotic convergence}\label{sec:asymptotic-convergence}
As a direct consequence of \cref{theorem:weak-convergence} and the fact that $\phi_{\mu_t}^\star$ results from the passage to the distribution of particles of \svgd (see \cref{sec:sbs-method}), we have the following result.

\begin{theorem}[\sbs asymptotic convergence]\label{theorem:sbs-asymptotic-convergence}
  Let $f : \Omega \to \R$ be in $C^0(\Omega) \cap W^{1, 4}(\Omega)$. Let $\kappa > 0$ and let $\pi$ be the \bd (\cref{def:boltzmann-distribution}) associated with $f$ and $\kappa$. Let $\mu_0 \in \Po$ and let $\hat{\mu}_i$ be the empirical measure of the particles at iteration $i$. Then,
  $$
  \left\{ f \left(X_i \right) \; \middle | \; X_i = (x^{(1)}, \dots, x^{(N)}) \sim \hat{\mu}^{\otimes N}_i \right\} \underset{\substack{\kappa \to \infty\\ N \to \infty\\ \eps \to 0\\ i \to \infty}}{\rightharpoonup} \{f^*\}.
  $$
\end{theorem}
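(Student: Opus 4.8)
Since $\Omega$ is compact and $f\in C^0(\Omega)$, the pushforward $\mu\mapsto f_\#\mu$ is continuous for weak convergence of probability measures, so the plan is to control the flow of measures on $\Omega$ and only apply $f_\#$ at the end. I would read the four-fold limit as a nested limit and peel it off starting from the discretization parameters: first let $i\to\infty$ and $\eps\to0$ with $t:=i\eps$ held as physical time, then $N\to\infty$, then $t\to\infty$, then $\kappa\to\infty$. Writing $\hat\mu_i$ for the empirical measure in \cref{alg:sbs}, the chain to establish is $f_\#\hat\mu_i \rightharpoonup f_\#\mu_t^N \rightharpoonup f_\#\mu_t \rightharpoonup f_\#\pi^{(\kappa)} \rightharpoonup \delta_{f^*}$, where $\mu_t^N:=\frac1N\sum_{j=1}^N\delta_{T_t(X_0^{(j)})}$ and $\mu_t:=(T_t)_\#\mu_0$.

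For the passage from particles to the continuous flow, I would first observe that the update $X_{i+1}=X_i+\eps\,\phi^\star_{\hat\mu_i}(X_i)$ is the explicit Euler scheme for the interacting ODE system \cref{eq:svgd-diff-eq} with target the \bd. Using that, under the hypotheses of \cref{theorem:weak-convergence}, $(T_t)_{t\geq0}$ is a locally Lipschitz family of diffeomorphisms and that $\Omega$ is compact, the driving vector field and its $N$-particle lift are Lipschitz on $\Omega^N$, so a standard one-step-error and Grönwall estimate gives, for fixed $N$ and fixed $t$, that $X_i\to\big(T_t(X_0^{(1)}),\dots,T_t(X_0^{(N)})\big)$ as $\eps\to0$, $i\to\infty$, $i\eps\to t$, hence $\hat\mu_i\rightharpoonup\mu_t^N$. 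Since the initial particles are \iid from $\mu_0$ and, by construction of \cref{eq:svgd-update}, the $N$-particle empirical flow is a discretization of the mean-field flow, the law of large numbers for this deterministic interacting system (as noted in \cref{sec:sbs-method}) gives $\mu_t^N\rightharpoonup\mu_t$ as $N\to\infty$. Because $\mu_0\in\Po$ and $\pi^{(\kappa)}\in H^1(\Omega)\subset\Po$ --- the latter from $f\in C^0(\Omega)\cap W^{1,4}(\Omega)$, see \cref{app:proof-sobolev_bd} --- \cref{theorem:weak-convergence} applies and yields $\mu_t\rightharpoonup\pi^{(\kappa)}$ as $t\to\infty$.

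For the temperature limit, I would invoke the characteristic property of the \bd (\cref{def:boltzmann-distribution}, see \cref{sec:boltzmann}): as $\kappa\to\infty$ the mass of $\pi^{(\kappa)}$ concentrates on $X^*=\{x\in\Omega : f(x)=f^*\}$, and a Laplace-type estimate shows $\int_\Omega g(f(x))\,m^{(\kappa)}(x)\dif x\to g(f^*)$ for every bounded continuous $g:\R\to\R$, \ie $f_\#\pi^{(\kappa)}\rightharpoonup\delta_{f^*}$. Composing the steps and using continuity of $f_\#$ then gives the claimed convergence of the law of $f(X_i)$. I expect the main obstacle to be the interaction between the first and third steps: the Euler analysis is only uniform on bounded time windows, whereas the outermost limit forces $t=i\eps\to\infty$, so a naive joint limit is not justified; I would handle this by keeping the limits in a nested order (making the statement an iterated limit) rather than trying to compare the discrete and continuous flows uniformly in time, which would need a contractivity of the \svgd flow toward $\pi^{(\kappa)}$ that is not available in general. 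A secondary technical point is making the law-of-large-numbers step rigorous for the finite-$N$ interacting system, \eg via a Dobrushin-type stability estimate for the mean-field flow, since the particles cease to be independent for $t>0$.
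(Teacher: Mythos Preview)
Your proposal is correct and follows essentially the same route the paper sketches: the law of large numbers for the particle-to-mean-field passage, \cref{theorem:weak-convergence} for the convergence of the continuous \svgd flow to $\pi$, and the concentration of the \bd on $X^*$ as $\kappa\to\infty$ (the paper's own argument is a single sentence citing exactly these three ingredients). You are considerably more explicit than the paper --- in particular you insert the intermediate time variable $t=i\eps$ and separate the Euler-consistency step from the long-time limit --- and you rightly flag the two genuine technical points (uniformity of the discretization in $t$, and propagation of chaos for the interacting system) that the paper's sketch leaves unaddressed.
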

Note that the order of the limits is important. The proof is a direct consequence of the law of large numbers and \cref{theorem:weak-convergence} (that are applicable as $f \in C^0(\Omega) \cap W^{1, 4}(\Omega)$), and finally the fact that the \bd tends to a distribution supported over the set of minimizers $X^*$ as $\kappa$ tends to infinity.

To summarize, we proved that \sbs is asymptotically convergent for any continuous function belonging to $W^{1, 4}(\Omega)$. Note that, since $\Omega$ is compact, $C^\infty(\Omega) \subset W^{1, 4}(\Omega)$, and therefore, the result holds for any smooth function on $\Omega$. We adapted the \svgd theoretical framework for target distributions that are in $\Po$ over a compact subset of $\R^d$ (see \cref{app:svgd}). This is different to what is usually considered in the literature, where the target distribution density is smooth and its domain is $\R^d$ (\eg \cite{Liu2016, Liu2016Kernel}). Some works have tried to relax the assumptions on the target distribution (\eg \cite{Korba2020, Sun2023}). However, thanks to the compactness of $\Omega$, our assumptions on $\pi$ are less restrictive and only consider integration constraints on its $1^\text{st}$ order weak derivatives, which makes our framework more adapted for global optimization.

The implementation of \sbs estimates the gradients using finite differences. At each iteration, it updates the set of particles by a small step in the direction induced by $\phi_{\hat{\mu}_i}^\star$, which is computed using the Adam optimizer \cite{Kingma2015} that gives better experimental results. We choose the initial distribution $\mu_0$ to be the uniform distribution on $\Omega$, as it maximizes the entropy (\ie high initial exploration), and we also use the RBF kernel function. These two objects are used in most of the \svgd literature. To better understand the previous results and involved objects, we recall %a non-exhaustive list of 
some definitions and theoretical results related to \svgd in \cref{app:svgd}.

\begin{figure*}[bt]
  \centering
  \begin{subfigure}{0.27\textwidth}
    \includegraphics[width=\textwidth]{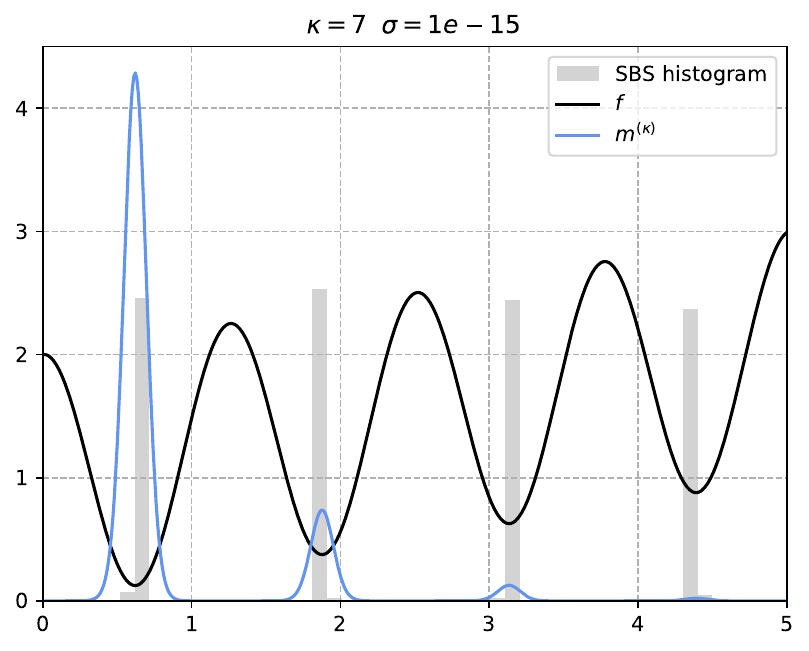}
  \end{subfigure}
  %\ \ \ 
  \hspace{1em}
  \begin{subfigure}{0.27\textwidth}
    \includegraphics[width=\textwidth]{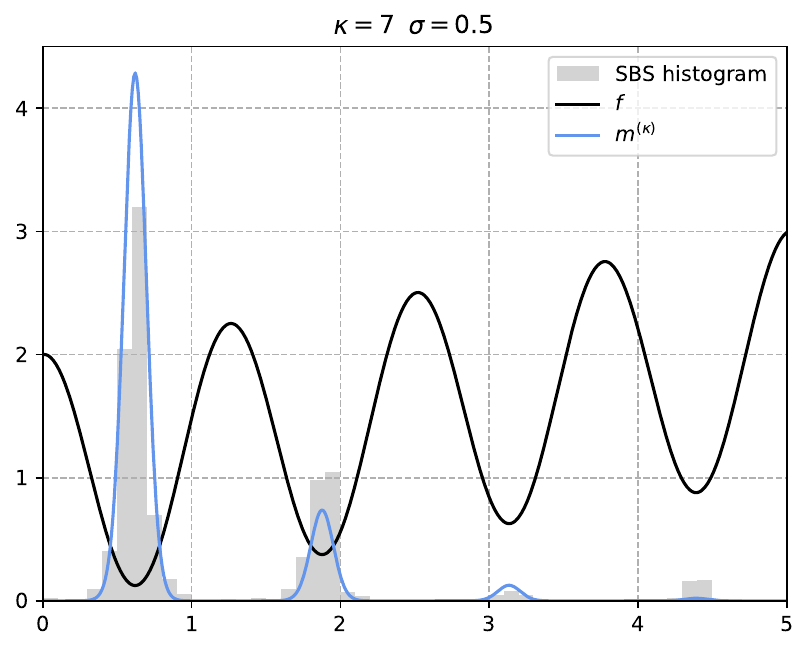}
  \end{subfigure}
  %\ \ \ 
  \hspace{1em}
  \begin{subfigure}{0.27\textwidth}
    \includegraphics[width=\textwidth]{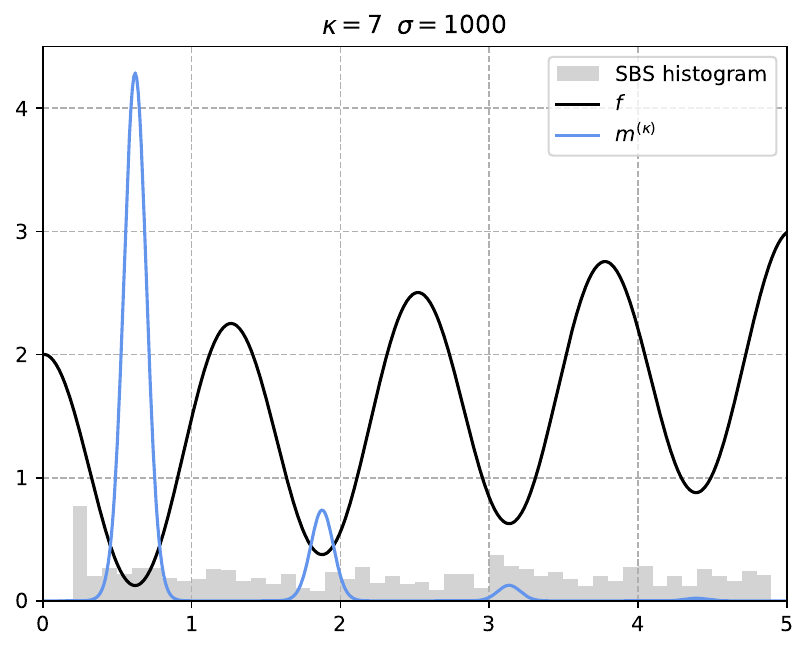}
  \end{subfigure}
  \vspace{-0.5em}
  \caption{\footnotesize Illustration of the exploration/exploitation trade-off in \sbs with different values of $\sigma$. In {\bf black}, the function $x \mapsto \mathrm{cos}(5x) + x/5 + 1$; in {\bf \color[HTML]{808080} grey}, the distribution of the particles; in {\bf \color[HTML]{6495ed} blue}, the BD $m^{(\kappa)}$. When $\sigma$ is too small, the particles are uniformly distributed over $X^*$. When $\sigma$ is too large, they are uniformly distributed over the whole domain $\Omega$.}
    \label{fig:sbs-boltzmann}
\end{figure*}
\section{SBS variants}\label{sec:sbs-variants}
In addition to the main \sbs method, we introduce two variants that can be more efficient in practice. The first one, called \sbspf, uses a particle filtering approach that removes the less promising particles (without replacing them). The second one, called \sbshybrid, is a hybrid method that uses \sbs as a continuation for other global optimization methods, or --seen the other way around-- those methods are used to initialize \sbs. %The particle filtering variant 
\sbspf uses less budget than \sbs, and  %The hybrid variant 
\sbshybrid uses some of the budget to run one of the pre-existing methods to initialize \sbs with better starting points; the aim is to approximate the global minimum better than \sbs with the same budget.%

\inlinetitle {SBS-PF}{.}~%
We use a simple particle filtering idea: to remove particles (\ie candidate minimizers of $f$) that are less promising or stuck in bad local minima. We choose to remove particles that do not move and correspond to significantly higher function values than the others, hence particles that are very likely stuck in bad local minima. This is done by removing particles using their function values and the distance between their previous and actual positions. More precisely, if these two quantities are respectively higher than the $q$-th and lower than the $p$-th percentiles of the function values and "previous-to-actual" distances of the particles, then the particle is removed. The difference between \sbs and this variant is visualized in \cref{fig:sbs-illustration}. One can see that, in \sbspf, the least promising candidates are rapidly removed without being replaced, so that the remaining particles are more likely to converge to the global minimum. This strategy results in a significant reduction of the budget used, while having comparable optimization results to \sbs. Note that the strategy to prove \cref{theorem:weak-convergence} is not directly applicable to \sbspf, thus, the asymptotic convergence of \sbspf is not guaranteed. However, the empirical results show that \sbspf is efficient in practice, and it is a good alternative to \sbs when the budget is limited.

\inlinetitle{SBS-HYBRID}{.}
This variant is based on the idea of using \sbs or \sbspf as a continuation for particles- or distribution-based methods, such as {\sc woa} or {\sc cma-es}. Indeed, the design of \sbs allows to initialize the particles with the result of one such method, and then resume the optimization process with an \sbs variant. More specifically, we introduce \sbshybrid that runs few iterations of both {\sc cma-es} and {\sc woa} to choose the most promising result, and then continues the optimization with \sbs (see \cref{alg:sbs-hybrid-init}). Both {\sc woa} and {\sc cma-es} are efficient methods, thus, a small number of iterations allows to find a good starting region for \sbs. Moreover, both methods are not well-suited for a large budget, but for different reasons: {\sc cma-es} uses early stopping rules (\eg the condition number of the covariance matrix), and {\sc woa} takes more time to run than \sbs for the same budget. \sbshybrid can be seen as a combination of the asymptotically consistent \sbs method, on top of very efficient but non-consistent methods. Among the strengths of \sbshybrid, we can mention that: i)~it empirically provides high-quality results, and ii)~it is still asymptotically consistent, since the initial distribution of the particles induced by {\sc woa} and {\sc cma-es} meet the assumptions of \cref{theorem:weak-convergence}.
\section{Choice of hyperparameters}\label{sec:hyperparameters}
In this section, we discuss the choice of the hyperparameters of \sbs and its variants. We focus on the choice of $\kappa$ and $\sigma$, which carry complex information about the behavior of the method.

\inlinetitle{Choice of $\kappa$}{.}~%
As detailed earlier, $\kappa$ controls the shape of the \bd from which \svgd samples. The bigger $\kappa$ is, the more the mass of the distribution gets concentrated around the global minimizers of the function. Intuitively, the optimal $\kappa$ such that a satisfying amount of the mass is around the global minimizers depends on the geometry of the function around local minima (the asymptotic behavior of the \bd depends on the local geometry, see \cite{Hwang1980}). Nevertheless, one can see in \cref{fig:kappa-comparison} that, in practice, the choice of $\kappa$ does not significantly affect the performance of \sbs. The reason is that, if the modes of the \bd that contain most of the density mass are the ones around the global minimizers, then \svgd would succeed in moving some particles in the areas of those modes, provided there are enough particles. The three following parameters control how the mass is repartitioned: the value of $\kappa$, the ratio between the volume of the region of local minimizers and the volume of the region of global minimizers, and the value of the function at the local minimizers. These three parameters are interdependent: the value of one can compensate for the value of the others. It is rather unlikely to encounter a function where these three parameters do not compensate each other. It would require the function to have an arbitrary small volume ratio or an arbitrary small distance between the local and global minima (see \cref{fig:ex-choice-kappa}). Thus, a very large $\kappa$, such as $10^3$, compensates almost all potential issues related to the geometry of the function and ensures a good performance on average.

\begin{figure*}[tb]
  \vspace{-1em}
   \centering
   \begin{subfigure}{\textwidth}
     \centering
     \begin{subfigure}{0.35\textwidth}
     \centering
     \includegraphics[width=0.78\textwidth]{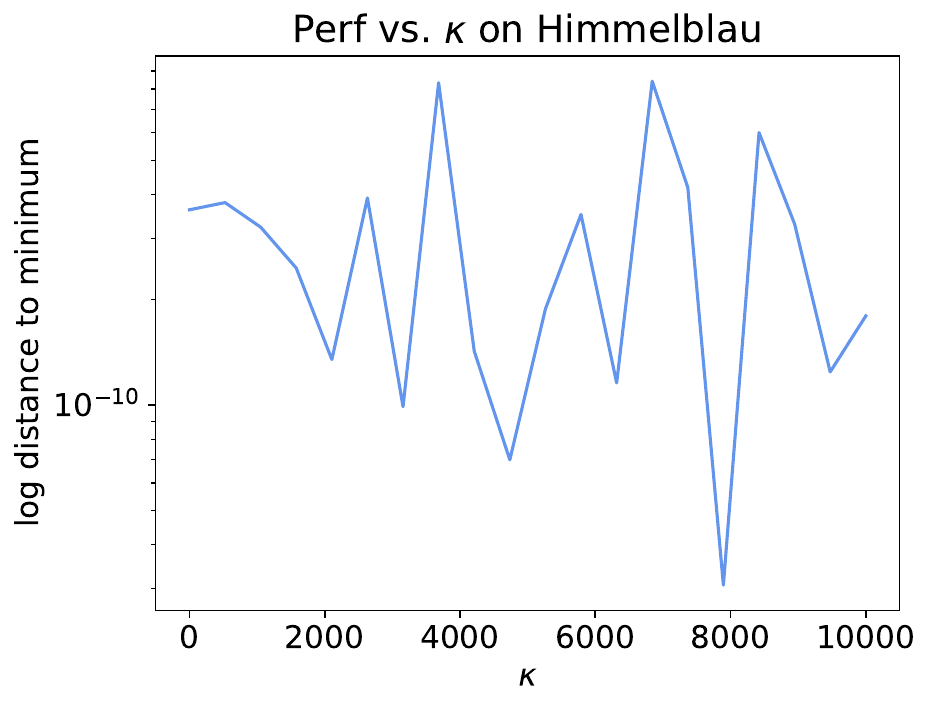}
     \end{subfigure}
     \hspace{1em}
     \begin{subfigure}{0.35\textwidth}
     \centering
     \includegraphics[width=0.78\textwidth]{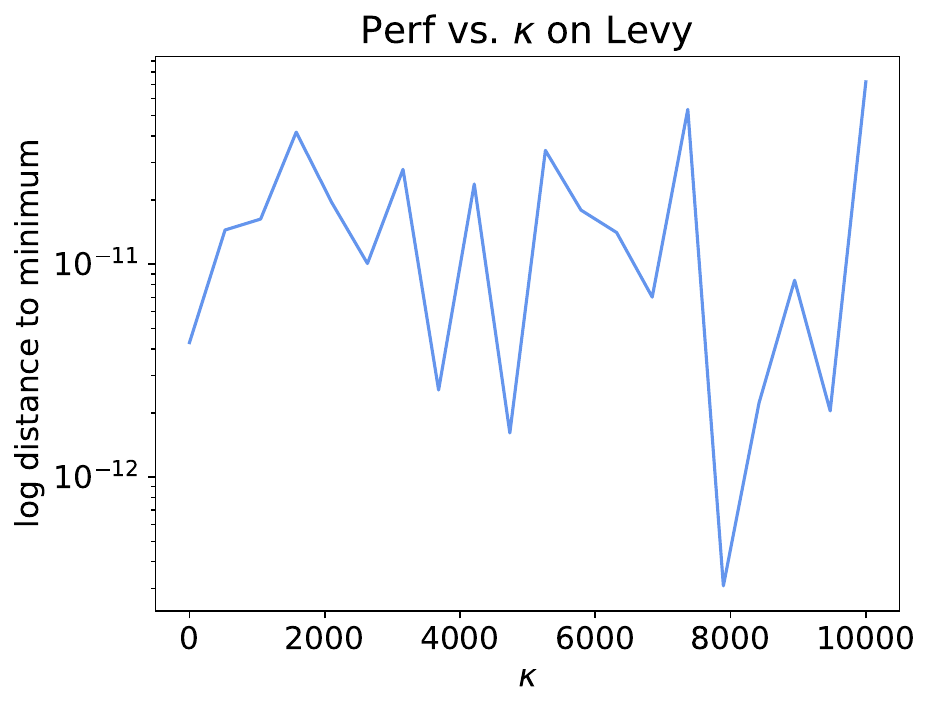}
     \end{subfigure}
     \caption{Log-distance between the computed solution and the global minimum \vs the value of $\kappa$ for \sbs.}
     \label{fig:kappa-comparison}
   \end{subfigure}
   \\
   \vspace{0.6em}
   \begin{subfigure}{\textwidth}
     \centering
     \begin{subfigure}{0.35\textwidth}
     \centering
     \includegraphics[width=0.7\textwidth]{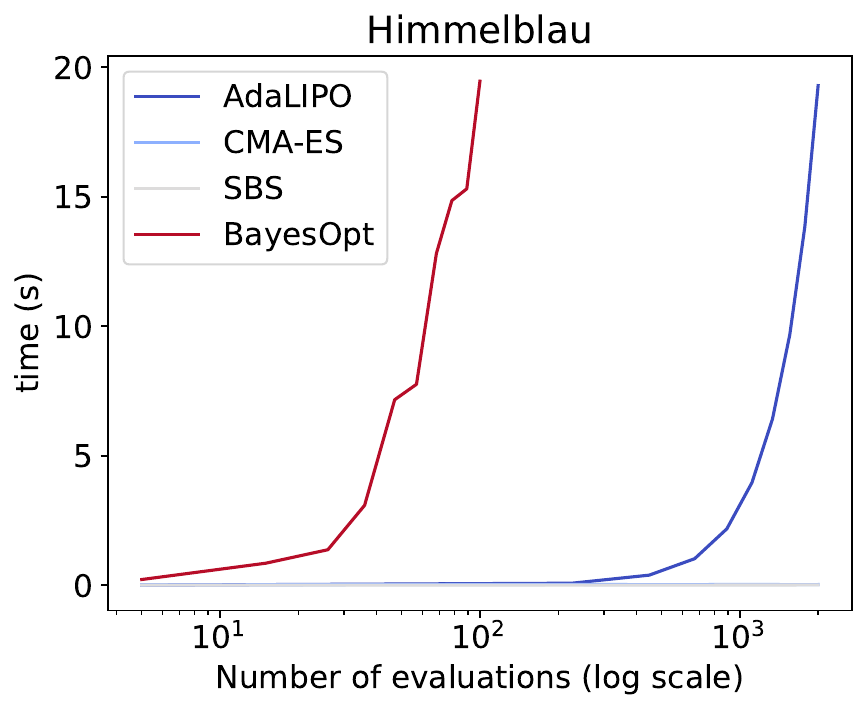}
     \end{subfigure}
     \hspace{1em}
     \begin{subfigure}{0.35\textwidth}
     \centering
     \includegraphics[width=0.7\textwidth]{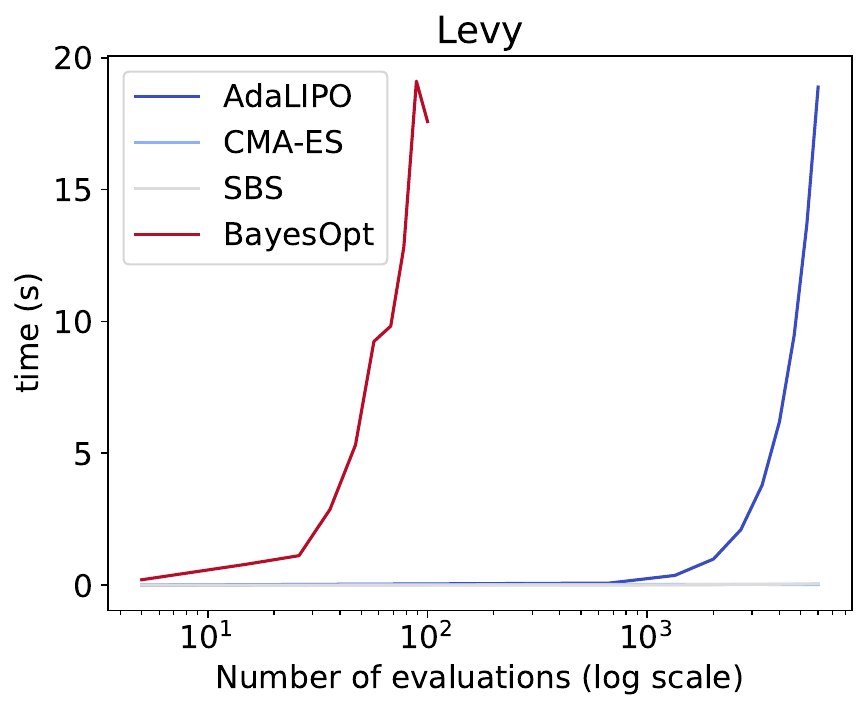}
     \end{subfigure}
     \caption{\footnotesize Execution time \vs the number of evaluations for \bayes, \adalipo, \cma, and \sbs}
     \label{fig:time-vs-eval}
   \end{subfigure}
   \vspace{-1.em}
   \caption{Insights for the compared algorithms:~\textbf{a)}~shows the low impact of $\kappa$ on the performance of \sbs. \textbf{b)}~shows the time to run for \bayes and \adalipo grows exponentially and is significantly higher than for \cma or \sbs. In each case (a) and (b), the \textbf{left} plot is for the Himmelblau, and the \textbf{right} is for the Levy function.}
   \label{fig:kappa-time}
 \end{figure*}

\inlinetitle{Choice of $\sigma$}{.}~%
All \sbs variants use the RBF kernel with a bandwidth $\sigma$, the choice of which is crucial for their performance. As detailed in \cref{sec:discussions}, the size of $\sigma$ controls the forces developed between particles. When a lot of particles are close together, they repel each other. This behavior enforces the exploration of the function domain, but at the same time it prevents \sbs from converging at narrow regions where global minimizers could be located. A natural choice is $\sigma = \frac{1}{N^2}$, where $N$ is the number of particles, which ensures that, when the particles are few, $\sigma$ gets large enough and \sbs explores the domain. On the other hand, with a lot a particles the exploration is ensured by the initial uniform distribution $\mu_0$ and the small induced $\sigma$ allows the particles to converge to the global minimum, even in narrow regions. For the \sbspf variant, $\sigma$ changes during the optimization process, as particles are being filtered out. For the \sbshybrid variant, as the initial particles are supposed to be already well-positioned and possibly close to the global minimum, $\sigma$ is set to a very small value, \eg $\sigma = 10^{-10}$.
\section{Experimental evaluation}\label{sec:benchmark}

Among the \sbs variants, we provide one named \sbspfhybrid that employs \sbspf as the final step of \sbshybrid. In our numerical comparison, we compare the \sbs variants with the following state-of-the-art global optimization methods: \cma \cite{Hansen1996}; \woa \cite{Mirjalili2016}, a particle-swarm method; \adalipo \cite{Malherbe2017}; \bayes \cite{Martinezcantin2014}, a similar method to \sbs but using \mala instead of \svgd \cite{Grenander1994, Welling2011, Raginsky2017}; and \cbo \cite{Pinnau2017} that is a stochastic particle-based method similar to \sbs (we %use the implementation 
implemented the algorithm as presented in \cite{Pinnau2017}).  

We use classical two dimensional benchmark functions for global optimization. Some are noisy and multimodal (Ackley, Drop wave, Egg Holder, Holder Table, Michalewicz, Rastrigin, Levy), others are smooth (Branin, Goldstein Price, Himmelblau, Rosenbrock, Camel, Sphere) (see more information in \cite{simulationlib}). We provide the implementation of this experiment\!\alink{github.com/gaetanserre/Stochastic-Global-Optimization}\!. For the results of \cref{tab:2d-benchmark}, we ran each method $10$ times on each function.

In the literature, the budget is defined as the number of function evaluations, however, the actual computational time can vary significantly between the methods and needs to be taken into account. Thus, the budget is set in order for the methods to stop in a reasonable time when run on a personal computer\footnote{The experiments were performed on an Apple M2 chip with $8$ cores and $16$GB of RAM.}. As one can see in \cref{fig:time-vs-eval}, the running time of \adalipo and \bayes is significantly higher than for the other methods. For this reason, their budget is set lower than for the other methods: $2$K for \adalipo, $100$ for \bayes, and $800$K for the rest. 

We introduce the following \textit{empirical competitive ratio}:
$$
\text{ECR}(m) = \frac{1}{|F|} \sum_{f \in F} \min \left(100, \frac{df_\text{m}}{df^*} \right),
$$
where $F$ is the set of benchmark functions, $df_\text{m}$ is the distance between the global minimum of $f$ and the approximation found by the method m, and $df^*$ is the smallest distance among all the methods. \text{ECR} provides information about the average precision compared to the best method (lower is better, and the best is $1$).

\begin{table*}[h]
  \fontsize{7pt}{7pt}\selectfont
  \caption{\footnotesize Comparison between all \sbs variants with several state-of-the-art methods on two dimensional benchmark functions. For each function, we report the average distance to the global minimum and standard deviation (lower is better). The precision is truncated. \sbshybrid runs $1$K iterations of {\sc cma-es} and {\sc woa}. As one can see, \sbshybrid and \sbs respectively rank $1^\text{st}$ and $4^\text{th}$ while \sbspfhybrid and \sbspf achieve competitive results with significantly less evaluations (respectively $\sim 67\%$ and $\sim 97\%$ budget reduction).}
  \vspace{-1.5em}
  \label{tab:2d-benchmark}
  \vskip 0.15in

  \begin{center}
    \begin{sc}
      \addtolength{\tabcolsep}{-0.2em}
      \begin{tabular}{lllllll||rrrr}
        \cline{2-11}
        \rule{0pt}{1em} & \multicolumn{6}{c||}{state-of-the-art} & \multicolumn{4}{c}{proposed method and variants} \\
        \toprule
        functions & langevin & bayesopt & cbo & adalipo & cma-es & woa & sbs-pf & sbs & sbs-pf-hybrid & sbs-hybrid \\
        \midrule
        Ackley & \makecell{$6.8$ \\ $\pm 1.7$} & \makecell{$0.1$ \\ $\pm 0.1$} & \makecell{$0.0$ \\ $\pm 0.0$} & \makecell{$1.3$ \\ $\pm 0.7$} & \makecell{$19.8$ \\ $\pm 0.0$} & \makecell{$\mathbf{8e^{-8}}$ \\ $\mathbf{\pm 5e^{-8}}$} & \makecell{$0.0$ \\ $\pm 9e^{-4}$} & \makecell{$8e^{-4}$ \\ $\pm 3e^{-4}$} & \makecell{$1e^{-5}$ \\ $\pm 1e^{-5}$} & \makecell{$7e^{-6}$ \\ $\pm 3e^{-6}$} \\
        Branin & \makecell{$8e^{-5}$ \\ $\pm 1e^{-4}$} & \makecell{$2e^{-4}$ \\ $\pm 1e^{-4}$} & \makecell{$0.4$ \\ $\pm 0.3$} & \makecell{$0.0$ \\ $\pm 0.0$} & \makecell{$3e^{-7}$ \\ $\pm 0$} & \makecell{$1e^{-6}$ \\ $\pm 1e^{-6}$} & \makecell{$5e^{-7}$ \\ $\pm 2e^{-7}$} & \makecell{$3e^{-7}$ \\ $\pm 3e^{-11}$} & \makecell{$\mathbf{3e^{-7}}$ \\ $\mathbf{\pm 0}$} & \makecell{$3e^{-7}$ \\ $\pm 5e^{-16}$} \\
        Drop Wave & \makecell{$0.9$ \\ $\pm 1e^{-16}$} & \makecell{$0.2$ \\ $\pm 0.1$} & \makecell{$0.1$ \\ $\pm 0.1$} & \makecell{$0.1$ \\ $\pm 0.0$} & \makecell{$0.3$ \\ $\pm 0.3$} & \makecell{$\mathbf{1e^{-15}}$ \\ $\mathbf{\pm 1e^{-15}}$} & \makecell{$0.0$ \\ $\pm 0.0$} & \makecell{$0.0$ \\ $\pm 0.0$} & \makecell{$0.1$ \\ $\pm 0.0$} & \makecell{$0.0$ \\ $\pm 0.0$} \\
        Egg Holder & \makecell{$2008.8$ \\ $\pm 2e^{-13}$} & \makecell{$90.3$ \\ $\pm 60.6$} & \makecell{$833.5$ \\ $\pm 1e^{-10}$} & \makecell{$40.0$ \\ $\pm 18.7$} & \makecell{$393.6$ \\ $\pm 5e^{-14}$} & \makecell{$\mathbf{3e^{-5}}$ \\ $\mathbf{\pm 5e^{-10}}$} & \makecell{$18.0$ \\ $\pm 19.0$} & \makecell{$8.0$ \\ $\pm 10.2$} & \makecell{$7.8$ \\ $\pm 9.6$} & \makecell{$21.5$ \\ $\pm 32.0$} \\
        Goldstein Price & \makecell{$391387.4$ \\ $\pm 301031.1$} & \makecell{$8.3$ \\ $\pm 6.6$} & \makecell{$4e^{-4}$ \\ $\pm 2e^{-4}$} & \makecell{$0.4$ \\ $\pm 0.3$} & \makecell{$8.1$ \\ $\pm 24.3$} & \makecell{$1e^{-6}$ \\ $\pm 7e^{-7}$} & \makecell{$6e^{-7}$ \\ $\pm 5e^{-7}$} & \makecell{$1e^{-9}$ \\ $\pm 2e^{-9}$} & \makecell{$\mathbf{6e^{-14}}$ \\ $\mathbf{\pm 6e^{-15}}$} & \makecell{$6e^{-13}$ \\ $\pm 1e^{-12}$} \\
        Himmelblau & \makecell{$47.5$ \\ $\pm 31.0$} & \makecell{$7e^{-4}$ \\ $\pm 9e^{-4}$} & \makecell{$0.1$ \\ $\pm 0.1$} & \makecell{$0.0$ \\ $\pm 0.0$} & \makecell{$9e^{-16}$ \\ $\pm 1e^{-15}$} & \makecell{$1e^{-6}$ \\ $\pm 1e^{-6}$} & \makecell{$1e^{-7}$ \\ $\pm 3e^{-7}$} & \makecell{$5e^{-11}$ \\ $\pm 4e^{-11}$} & \makecell{$4e^{-19}$ \\ $\pm 7e^{-19}$} & \makecell{$\mathbf{1e^{-20}}$ \\ $\mathbf{\pm 1e^{-20}}$} \\
        Holder Table & \makecell{$3.4$ \\ $\pm 0.5$} & \makecell{$0.4$ \\ $\pm 0.9$} & \makecell{$0.0$ \\ $\pm 7e^{-4}$} & \makecell{$0.0$ \\ $\pm 0.0$} & \makecell{$5.0$ \\ $\pm 5.0$} & \makecell{$\mathbf{2e^{-6}}$ \\ $\mathbf{\pm 2e^{-6}}$} & \makecell{$2e^{-6}$ \\ $\pm 1e^{-7}$} & \makecell{$2e^{-6}$ \\ $\pm 2e^{-9}$} & \makecell{$2e^{-6}$ \\ $\pm 1e^{-10}$} & \makecell{$2e^{-6}$ \\ $\pm 1e^{-10}$} \\
        Michalewicz & \makecell{$9.6$ \\ $\pm 0.2$} & \makecell{$7.9$ \\ $\pm 1e^{-5}$} & \makecell{$8.6$ \\ $\pm 0.0$} & \makecell{$7.9$ \\ $\pm 0.0$} & \makecell{$8.0$ \\ $\pm 0.3$} & \makecell{$7.9$ \\ $\pm 1e^{-6}$} & \makecell{$7.9$ \\ $\pm 1e^{-6}$} & \makecell{$7.9$ \\ $\pm 1e^{-10}$} & \makecell{$7.9$ \\ $\pm 8e^{-14}$} & \makecell{$\mathbf{7.9}$ \\ $\mathbf{\pm 4e^{-15}}$} \\
        Rastrigin & \makecell{$30.5$ \\ $\pm 3.7$} & \makecell{$2.2$ \\ $\pm 1.2$} & \makecell{$9e^{-4}$ \\ $\pm 7e^{-4}$} & \makecell{$0.2$ \\ $\pm 0.2$} & \makecell{$5.4$ \\ $\pm 5.8$} & \makecell{$\mathbf{6e^{-15}}$ \\ $\mathbf{\pm 7e^{-15}}$} & \makecell{$5e^{-6}$ \\ $\pm 3e^{-6}$} & \makecell{$5e^{-9}$ \\ $\pm 1e^{-8}$} & \makecell{$0.5$ \\ $\pm 0.7$} & \makecell{$0.3$ \\ $\pm 0.6$} \\
        Rosenbrock & \makecell{$6852.4$ \\ $\pm 4943.7$} & \makecell{$0.2$ \\ $\pm 0.2$} & \makecell{$0.0$ \\ $\pm 0.0$} & \makecell{$0.1$ \\ $\pm 0.0$} & \makecell{$4e^{-16}$ \\ $\pm 7e^{-16}$} & \makecell{$2e^{-7}$ \\ $\pm 1e^{-7}$} & \makecell{$6e^{-5}$ \\ $\pm 9e^{-5}$} & \makecell{$2e^{-6}$ \\ $\pm 4e^{-6}$} & \makecell{$5e^{-17}$ \\ $\pm 8e^{-17}$} & \makecell{$\mathbf{1e^{-17}}$ \\ $\mathbf{\pm 1e^{-17}}$} \\
        Camel & \makecell{$397.9$ \\ $\pm 9.0$} & \makecell{$0.0$ \\ $\pm 0.0$} & \makecell{$0.0$ \\ $\pm 0.0$} & \makecell{$0.0$ \\ $\pm 0.0$} & \makecell{$2e^{-5}$ \\ $\pm 1e^{-14}$} & \makecell{$2e^{-5}$ \\ $\pm 2e^{-8}$} & \makecell{$\mathbf{2e^{-5}}$ \\ $\mathbf{\pm 1e^{-7}}$} & \makecell{$2e^{-5}$ \\ $\pm 1e^{-11}$} & \makecell{$2e^{-5}$ \\ $\pm 0$} & \makecell{$2e^{-5}$ \\ $\pm 1e^{-16}$} \\
        Levy & \makecell{$83.5$ \\ $\pm 12.9$} & \makecell{$0.1$ \\ $\pm 0.1$} & \makecell{$0.0$ \\ $\pm 0.0$} & \makecell{$0.0$ \\ $\pm 0.0$} & \makecell{$1.0$ \\ $\pm 2.1$} & \makecell{$8e^{-9}$ \\ $\pm 7e^{-9}$} & \makecell{$9e^{-8}$ \\ $\pm 8e^{-8}$} & \makecell{$1e^{-12}$ \\ $\pm 2e^{-12}$} & \makecell{$3e^{-19}$ \\ $\pm 8e^{-19}$} & \makecell{$\mathbf{9e^{-20}}$ \\ $\mathbf{\pm 2e^{-19}}$} \\
        Sphere & \makecell{$9e^{-5}$ \\ $\pm 6e^{-5}$} & \makecell{$5e^{-4}$ \\ $\pm 5e^{-4}$} & \makecell{$0.0$ \\ $\pm 0.0$} & \makecell{$0.0$ \\ $\pm 9e^{-4}$} & \makecell{$5e^{-16}$ \\ $\pm 1e^{-15}$} & \makecell{$1e^{-16}$ \\ $\pm 8e^{-17}$} & \makecell{$5e^{-8}$ \\ $\pm 7e^{-8}$} & \makecell{$6e^{-12}$ \\ $\pm 7e^{-12}$} & \makecell{$1e^{-19}$ \\ $\pm 2e^{-19}$} & \makecell{$\mathbf{1e^{-21}}$ \\ $\mathbf{\pm 3e^{-21}}$} \\
        \midrule
        ECR & $62.2$ & $46.8$ & $46.7$ & $46.7$ & $24.0$ & $22.4$ & $46.7$ & $20.8$ & $16.2$ & $\mathbf{15.4}$ \\
        \midrule
        Average rank & $9.38$ & $7.85$ & $7.46$ & $7.15$ & $6.69$ & $3.00$ & $4.15$ & $3.38$ & $3.15$ & $\mathbf{2.77}$ \\
        \midrule
        Final rank & $10$ & $9$ & $8$ & $7$ & $6$ & $2$ & $5$ & $4$ & $3$ & $\mathbf{1}$ \\
        \bottomrule
      \end{tabular}
    \end{sc}
  \end{center}
  \vskip -0.1in
\end{table*}

In the results of \cref{tab:2d-benchmark}, one can see that \sbs outperforms almost all state-of-the-art methods and scores the fourth rank on average. \sbspf achieves comparable results on average with significantly less function evaluations ($\sim 97\%$ budget reduction). Moreover, \sbshybrid and \sbspfhybrid outperform all the other methods on average. They combine the efficiency of either {\sc cma-es} and {\sc woa} with the suitability of \sbs for large budgets, while the addition of particle filtering reduces the budget by $\sim 67\%$. In parallel, \sbs, \sbspfhybrid, and \sbshybrid score respectively the $3^\text{rd}$, $2^\text{nd}$, and $1^\text{st}$ rank on the competitive ratio measure, showing that their approximations are precise on average compared to the other methods. 

In the appendix, we provide the results of the same experiment on $50$ dimensional benchmark functions, by restricting to only methods that can run in low computational time (see \cref{tab:50d-benchmark}). There, the budget is set to $8$M. One can observe that \sbs and its variants outperform all competitors, and \sbspf achieves the best results with a budget reduction of $\sim 97\%$, compared to \sbs.  However, the budget reduction of \sbspfhybrid is less significant ($\sim 16\%$), due to the fact that the high dimensionality of the functions and the initial distribution makes the less promising particles harder to distinguish. Overall, all \sbs variants seem robust to function shapes, contrary to \cma for instance, which is very precise on valley-shaped functions but struggles on the multimodal functions.
\section{Discussion}\label{sec:discussions}
\inlinetitle{Link with Simulated Annealing}{.}~%
The link between \sbs and Simulated Annealing \cite{Kirkpatrick1983} is not difficult to see. Indeed, both algorithms are asymptotic methods that sample from the \bd. However, the way they sample from that distribution is different. Simulated Annealing is a Markov Chain Monte-Carlo method \cite{Azencott1989}, while \sbs is a deterministic variational approach. The minimum temperature parameter of Simulated Annealing is the inverse of the $\kappa$ parameter of \sbs. Thus, any scheduler for Simulated Annealing's temperature can also be used in \sbs. However, there is an extra degree of exploration/exploitation in \sbs, brought by the kernel bandwidth employed by the \svgd sampling.

\inlinetitle{Locality of the kernel}{.}~%
In classical \svgd implementations, the RBF kernel used is:
$
k(x, x') = \exp \left( -\lVert x - x' \rVert_2^2 / 2 \sigma^2 \right),
$
as it is in the Stein class of any smooth density supported on $\R^d$.
The bandwidth $\sigma$ controls the locality of the attraction and repulsion forces applied on the particles, expressed as:
\begin{align*}
  \text{attr}(x) &= \E_{x' \sim \hat{\mu}_i} \left[ \nabla \log \pi(x') k(x, x') \right],\\
  \text{rep}(x) &= \E_{x' \sim \hat{\mu}_i} \left[ \nabla_{x'}k(x, x') \right].
\end{align*}
The first term attracts remote particles to a close cluster of particles, and the second term repels particles that are too close to each other. Hence, they are respectively exploitation and exploration forces. Indeed, the attraction allows particles to ``fall'' in local minima, wherein a lot of particles are already stuck. The repulsion prevents particles from getting stuck together at a narrow region of the search space, and forces them to explore the space. $\sigma$ controls the range of these forces. A small $\sigma$ value leads to a weak repulsion and thus more exploitation. An arbitrary small $\sigma$ leads to a uniform distribution over the local minima. In the contrary, a large $\sigma$ leads to more exploration, as the particles will repel themselves even from a very far distance. An arbitrary large $\sigma$ leads to a uniform discretization of the space. These behaviors are illustrated in \cref{fig:sbs-boltzmann}. In the case of \sbs, the value of $\sigma$ is a user parameter.

\inlinetitle{Some weaknesses}{.}~%
Because of the gradient approximation by finite difference that occurs in \sbs, our methods require a large budget. That is a common issue in gradient-based optimization algorithms. However, in the contrary of more frugal approches (\eg \bayes), \sbs and its variants have a way smaller execution time. Another weakness of \sbs is the difficult choice of the kernel. As explained above, the kernel controls the particles movements and the performance of one specific kernel choice highly depends on the geometry of the objective function. This choice is crucial and future users should tune this hyperparameter carefully. A way to mitigate this problem would be to find an adaptive kernel that uses only evaluations of the objective function to choose the best way for the particles to interact. However, we believe that is a quite complex subject that is out of the scope of the current study, and should be investigated in a more general point of view.

\section{Conclusion}
In this paper, we introduced the Stein Boltzmann Sampling (\sbs) method for global optimization, along some variants. We proved that it is asymptotically consistent using the theory of the \svgd algorithm that we extended to a more general class of target distributions, thanks to the compactness of the domain. This new \svgd framework is particularly suitable for global optimization, as it allows to sample from the \bd of any continuous function given integration constraints on its $1^\text{st}$ order weak derivatives. We showed in our experimental evaluation that \sbs outperforms state-of-the-art methods on average on classical benchmark functions, that \sbspf can lead to drastic reduction of the needed computational budget while having comparable performance than the original \sbs version, and that \sbshybrid outperforms all the other methods in practice. This work suggests that, for obtaining the best trade-off between accurate approximations and low budget, \sbs should be used as a continuation for others particles or distribution-based methods, conjointly with particles filtering strategies (\sbspfhybrid). As future, the convergence rate of \sbs and its components can be further studied, and more sophisticated particle filtering strategies can be designed to make it more appealing for global optimization in real-world applications.

\acknowledgments{
  The authors acknowledge the support from the Industrial Data Analytics and Machine Learning Chair hosted at ENS Paris-Saclay. Also, we sincerely thank the anonymous reviewers for their valuable comments and suggestions, which have greatly helped improve the quality of this paper.
}

{
\small
\bibliographystyle{plain}

}
%
%%%%%%%%%%%%%%%%%%%%%%%%%%%%%%%%%%%%%%%%%%%%%%%%%%%%%%%%%%%%

\newpage
\onecolumn

\pagebreak

\appendix

\begin{table}[H]
  \fontsize{7pt}{7pt}\selectfont
  \caption{\footnotesize Comparison between all \sbs variants with several state-of-the-art methods on $50$ dimensional benchmark functions. For each function, we report the average distance to the global minimum (lower is better). The precision is truncated. \sbshybrid runs $1$K iterations of {\sc cma-es} and {\sc woa}. As one can see, \sbspf ranks $1^\text{s}$, as \sbs, while having a significant budget reduction ($\sim 97\%$). Moreover, \sbspfhybrid outperforms \sbshybrid while having a budget reduction of $\sim 16\%$. The high dimensionality of the functions makes the particle filtering of \sbspfhybrid less efficient.}
   \vspace{-1.5em}
  \label{tab:50d-benchmark}
  \vskip 0.15in
  \begin{center}
    \begin{sc}
      \begin{tabular}{lllll||rrrr}
        \cline{2-9}
        \rule{0pt}{1em} & \multicolumn{4}{c||}{state-of-the-art} & \multicolumn{4}{c}{proposed methods} \\
        \toprule
        functions & langevin & cbo & woa & cma-es & sbs-hybrid & sbs-pf-hybrid & sbs & sbs-pf \\
        \midrule
        Ackley & \makecell{$21.5$ \\ $\pm 0.0$} & \makecell{$21.1$ \\ $\pm 0.1$} & \makecell{$19.8$ \\ $\pm 0.1$} & \makecell{$19.6$ \\ $\pm 0.0$} & \makecell{$19.5$ \\ $\pm 0.1$} & \makecell{$19.6$ \\ $\pm 0.1$} & \makecell{$\mathbf{19.0}$ \\ $\mathbf{\pm 0.1}$} & \makecell{$19.0$ \\ $\pm 0.1$} \\
        Michalewicz & \makecell{$8.6$ \\ $\pm 0.6$} & \makecell{$\mathbf{0.8}$ \\ $\mathbf{\pm 0.5}$} & \makecell{$4.4$ \\ $\pm 0.2$} & \makecell{$25.8$ \\ $\pm 2.4$} & \makecell{$24.2$ \\ $\pm 1.8$} & \makecell{$24.1$ \\ $\pm 2.5$} & \makecell{$3.8$ \\ $\pm 0.4$} & \makecell{$2.4$ \\ $\pm 0.8$} \\
        Rastrigin & \makecell{$884.4$ \\ $\pm 202.5$} & \makecell{$841.1$ \\ $\pm 302.9$} & \makecell{$593.8$ \\ $\pm 22.2$} & \makecell{$\mathbf{107.4}$ \\ $\mathbf{\pm 20.9}$} & \makecell{$114.5$ \\ $\pm 24.9$} & \makecell{$115.0$ \\ $\pm 10.9$} & \makecell{$280.5$ \\ $\pm 17.2$} & \makecell{$280.5$ \\ $\pm 17.2$} \\
        Rosenbrock & \makecell{$367368.5$ \\ $\pm 22139.8$} & \makecell{$176596.0$ \\ $\pm 0$} & \makecell{$20145.3$ \\ $\pm 3609.2$} & \makecell{$\mathbf{0.4}$ \\ $\mathbf{\pm 1.2}$} & \makecell{$28.9$ \\ $\pm 1.3$} & \makecell{$28.7$ \\ $\pm 2.0$} & \makecell{$39.5$ \\ $\pm 1.4$} & \makecell{$25.3$ \\ $\pm 4.0$} \\
        Levy & \makecell{$2931.7$ \\ $\pm 88.4$} & \makecell{$349.8$ \\ $\pm 38.3$} & \makecell{$224.5$ \\ $\pm 13.4$} & \makecell{$75.3$ \\ $\pm 27.0$} & \makecell{$81.1$ \\ $\pm 19.6$} & \makecell{$70.4$ \\ $\pm 23.2$} & \makecell{$\mathbf{55.9}$ \\ $\mathbf{\pm 2.6}$} & \makecell{$57.8$ \\ $\pm 4.4$} \\
        Sphere & \makecell{$0.0$ \\ $\pm 3e^{-4}$} & \makecell{$5000.0$ \\ $\pm 0$} & \makecell{$646.3$ \\ $\pm 31.5$} & \makecell{$1e^{-14}$ \\ $\pm 2e^{-15}$} & \makecell{$\mathbf{7e^{-20}}$ \\ $\mathbf{\pm 1e^{-20}}$} & \makecell{$2e^{-19}$ \\ $\pm 2e^{-19}$} & \makecell{$2e^{-10}$ \\ $\pm 1e^{-11}$} & \makecell{$1e^{-4}$ \\ $\pm 5e^{-6}$} \\
        \midrule
        ECR & $43.9$ & $36.0$ & $35.3$ & $\mathbf{1.5}$ & $13.4$ & $13.2$ & $18.1$ & $28.2$ \\
        \midrule
        Average rank & $7.17$ & $6.17$ & $5.83$ & $3.67$ & $3.67$ & $3.50$ & $\mathbf{3.00}$ & $\mathbf{3.00}$ \\
        \midrule
        Final rank & $7$ & $6$ & $5$ & $4$ & $3$ & $2$ & $\mathbf{1}$ & $\mathbf{1}$ \\
        \bottomrule
      \end{tabular}
    \end{sc}
  \end{center}
  \vskip -0.1in
\end{table}

\begin{table}[H]
  \fontsize{8pt}{8pt}\selectfont
  \caption{\small Collection of all notations and their meanings}
  \vspace{-0.5em}
  \label{tab:notations}
  \vskip 0.15in
  \centering
  {
    \renewcommand{\arraystretch}{1.5}%
    \begin{tabular}{c|l}
      \textbf{Notation} & \textbf{Definition} \\
      \hline
      $f$ & function to minimize \\
      $d$ & dimension of the domain of $f$ \\
      $\Omega$ & compact subset of $\R^d$, domain of $f$ \\
      $X^*$ & set of global minimizers of $f$ \\
      $W^{p, m}$ & Sobolev space of functions with $p$-integrable $m$-th order weak derivatives \\
      $H^m$ & $W^{2, m}$ \\
      $\lambda$ & Lebesgue measure \\
      $m^{(\kappa)}$ & density of the \bd with parameter $\kappa$ \\
      $\A_\mu$ & the Stein operator associated to the measure $\mu$ \\
      $\cal{S}(\mu)$ & the Stein class of the measure $\mu$ \\
      $\Po$ & the set of probability measures supported over $\Omega$ with density in $H^1$ \\
      $\pi$ & target distribution, the \bd of $f$ in \sbs context \\
      $\H_0$ & the foundational RKHS of \svgd \\
      $k$ & the kernel of the RKHS $\H_0$ \\
      $\H$ & the product RKHS of \svgd constructed using $\H_0$ \\
      $T_\mu$ & an integral operator from $\LuO$ to $\H_0$ \\
      $S_\mu$ & an integral operator from $\LuOO$ to $\H$ constructed using $T_\mu$ \\
      $\phi_\mu^*$ & the optimal transport vector field in $\H$ constructed by \svgd \\
      $\K(\mu | \pi)$ & the Kernelized Stein Discrepancy \\
      $(\mu_i)_{i \in \N}$ & sequence of measures constructed by \svgd \\
      $\hat{\mu}_i$ & empirical measure of the \svgd particles \\
      $(\mu_t)_{t \in \R_{\geq 0}}$ & net extension of $(\mu_i)_{i \in \N}$ \\
      $\Phi : \R_{\geq 0} \times \Po$ & the flow of measures associated to $(\mu_t)_{t \in \R_{\geq 0}}$ \\
      \hline
    \end{tabular}
  }
\end{table}

\section{Theoretical foundations}
In this section, we introduce fundamental results related to the Boltzmann distribution (\bd) and the \svgd theory. Please note that, concerning the \bd, those results are not new. Concerning \svgd, we adapt its generic theoretical framework, allowing more general target distributions. We prove classical results of \svgd theory in this novel framework. The purpose of this section is to provide a self-contained presentation of the theory behind \sbs for the reader and to show the consistency of our adapted \svgd framework.
\subsection{Boltzmann distribution}\label{sec:boltzmann}
Recall that the \bd has been formally defined, in \cref{def:boltzmann-distribution}.
The \bd is a well-known distribution in statistical physics. It is used to model the distribution of the energy of a system in thermal equilibrium. The parameter $\kappa$ is called the {\it inverse temperature}. The higher $\kappa$ is, the more concentrated the mass is around the minima of $f$. When $\kappa$ tends to infinity, the \bd tends to a distribution supported over the minima of $f$.
The \bd is typically used in a discrete settings, \ie where the number of states is finite. The continuous version can be defined using the Gibbs measure. The following properties come from \cite{Luo2019}. For the sake of completeness, we provide the proofs in \cref{app:proof-boltzmann-properties}. 
\begin{properties}[Properties of the Boltzmann distribution]\label{properties:boltzmann-properties}
  Let $m^{(\kappa)}$ be defined as in \cref{def:boltzmann-distribution}. Then, we have the following properties:
  \begin{itemize}
    \item If $\lambda(X^*) = 0$, then, $\forall x \in \Omega$,
    $$
    \lim_{\kappa \to \infty} m^{(\kappa)}(x) = \begin{cases}
      \infty &\textrm{if } x \in X^* \\
      0 &\textrm{otherwise.}
    \end{cases}
    $$%
    \item If $0 < \lambda(X^*)$, then, $\forall x \in \Omega$,
    $$
    \lim_{\kappa \to \infty} m^{(\kappa)}(x) = \begin{cases}
      \lambda(X^*)^{-1} &\textrm{if } x \in X^* \\
      0 &\textrm{otherwise}.
    \end{cases}
    $$%
    \item $\forall f \in C^0(\Omega, \R)$,
    $$
    \lim_{\kappa \to \infty} \int_\Omega f(x) \; m^{(\kappa)}(x) \dif x = f^*.
    $$%
  \end{itemize}
\end{properties}
A visual representation of the \bd is given in \cref{fig:boltzmann-distribution}. One can see that, as $\kappa$ increases, $m^{(\kappa)}$ becomes more and more concentrated around the minima of $f$. We use the \bd induced by the density $m^{(\kappa)}$ (also noted $m^{(\kappa)}$ for simplicity) of \cref{eq:BD}. We provide the proof of the properties in \cref{app:proof-boltzmann-properties}. To sample from this distribution, we need to compute the integral $\int_\Omega e^{-\kappa f(t)} \dif t$, which however, is likely to be intractable for a general $f$.
\subsection{Stein Variational Gradient Descent}\label{app:svgd}
Sampling from an intractable distribution is a common task in Bayesian inference, where the target distribution is a posterior one. Computation becomes difficult due to the presence of an intractable integral within the likelihood. The {\it Stein Variational Gradient Descent} \cite{Liu2016} is a method that transforms iteratively an arbitrary measure $\mu$ to a target distribution $\pi$. In the case of \sbs, $\pi$ is the \bd defined in \cref{def:boltzmann-distribution}, for any $\kappa > 0$. The algorithm is based on the {\it Stein method} \cite{Stein1972}. The theory of \svgd has been developed in several works over the years. Note that recently, \cite{Korba2021} introduced a new sampling algorithm based on the same objective to \svgd, less sensitive to the choice of the step-size but not suitable for non-convex objectives. The remainder of this section introduces key definitions and theoretical results related to \svgd and shows that they hold when considering a compact domain $\Omega$ and a target distribution density in $H^1(\Omega)$: a adapted framework particularly suitable for global optimization that we use to prove the consistency of \sbs (see \cref{sec:sbs-theory}).
\subsubsection{Definitions}
For any natural number $n$, we start by defining the set of probability measures on $\Omega$ that have a density w.r.t. the Lebesgue measure and are in $W^{1, n}(\Omega)$.
Let $\cal{P}_n(\Omega)$ denote the set of probability measures on $\Omega$ such that
  $$
  \forall \mu \in \cal{P}_n(\Omega), \; \mu \ll \lambda \; \land \; \mu(\cdot) \in W^{1, n}(\Omega) \; \land \; \supp(\mu(\cdot)) = \Omega,
  $$
where $\mu(\cdot)$ is the density of $\mu$ \wrt $\lambda$. In \svgd theory, $\mu$ and $\pi$ must belong to $\Po$. Thus, their densities lie in $H^1(\Omega)$. The condition on their support ensures that the $\KL$ divergence is well-defined. In the following, we denote the density \wrt $\lambda$ of a measure $\mu$ by the function $\mu : \Omega \to \R_{\geq 0}$.
\subsubsection{Stein discrepancy}
The Stein method defines the Stein operator associated to a measure $\mu$ \cite{Liu2017}:
\begin{align*}
  \A_\mu : \; C^1(\Omega, \Omega) &\to C^0(\Omega, \R), \\
  \phi &\mapsto \nabla \log \mu(\cdot)^\top \phi(\cdot) + \nabla \cdot \phi(\cdot),
\end{align*}
where $(\nabla)$ and $(\nabla \cdot)$ are respectively the gradient and the divergence operators, in the distributional sense.
We denote this mapping by $\A_\mu \phi$, for any $\phi$ in $C^1(\Omega, \Omega)$.
It also defines a class of functions, the Stein class of measures.
\begin{definition}[Stein class of measures \cite{Liu2016Kernel}]
  Let $\mu \in \Po$ such that $\mu \ll \lambda$, and let $\phi : \Omega \to \Omega$. As $\Omega$ is compact, the boundary of $\Omega$ (denoted by $\diff \Omega$) is nonempty. We say that $\phi$ is in the {\it Stein class} of $\mu$ if $\phi \in H^1(\Omega)$ and
  $$
  \oint_{\diff \Omega} \mu(x) \phi(x) \cdot \vec{n}(x) \dif S(x) = 0,
  $$
  where $\vec{n}(x)$ is the unit normal vector to the boundary of $\Omega$. We denote by $\cal{S}(\mu)$ the Stein class of $\mu$.
\end{definition}
The key property of $\cal{S}(\mu)$ is that, for any function $f$ in $\cal{S}(\mu)$, the expectation of $\A_\mu f$ \wrt $\mu$ is null.
\begin{lemma}[Stein identity \cite{Stein1972}]\label{lemma:stein-identity}
  Let $\mu \in \Po$ such that $\mu \ll \lambda$, and let $\phi \in \cal{S}(\mu)$. Then,
  $$
  \E_{x \sim \mu}[\A_\mu \phi(x)] = 0.
  $$
\end{lemma}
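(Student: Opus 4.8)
The plan is to prove the Stein identity (Lemma~\ref{lemma:stein-identity}) by a direct application of the divergence theorem on $\Omega$, using the definition of the Stein operator $\A_\mu$ and the boundary condition that defines membership in the Stein class $\cal{S}(\mu)$. The starting observation is algebraic: for $\mu \ll \lambda$ with density $\mu(\cdot)$ and for $\phi \in \cal{S}(\mu) \subset H^1(\Omega)$, one has the product-rule identity
$$
\nabla \cdot \bigl(\mu(x)\,\phi(x)\bigr) = \mu(x)\,\nabla \cdot \phi(x) + \nabla \mu(x)^\top \phi(x) = \mu(x)\,\Bigl(\nabla \log \mu(x)^\top \phi(x) + \nabla \cdot \phi(x)\Bigr) = \mu(x)\,\A_\mu\phi(x),
$$
where I use $\nabla \mu(x) = \mu(x)\nabla\log\mu(x)$, valid since $\supp(\mu(\cdot)) = \Omega$ so $\log\mu$ is well-defined (in the weak sense). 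All derivatives are interpreted in the distributional sense as in the paper, and the identity holds $\lambda$-a.e.\ since $\mu(\cdot) \in W^{1,2}$ and $\phi \in H^1$.

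**Next I would** integrate this identity over $\Omega$ against $\lambda$ and apply the Gauss--Green / divergence theorem for Sobolev vector fields on the smooth compact domain $\Omega$:
$$
\E_{x\sim\mu}[\A_\mu\phi(x)] = \int_\Omega \A_\mu\phi(x)\,\mu(x)\dif x = \int_\Omega \nabla\cdot\bigl(\mu(x)\phi(x)\bigr)\dif x = \oint_{\diff\Omega} \mu(x)\,\phi(x)\cdot\vec{n}(x)\dif S(x).
$$
The right-hand side vanishes by the very definition of $\phi \in \cal{S}(\mu)$, which concludes the proof. The fact that $\Omega$ is smooth and compact is exactly what licenses the boundary integral to be a genuine surface integral over $\diff\Omega$ with unit outward normal $\vec{n}$, and the $H^1$ regularity of both $\mu(\cdot)\phi$ guarantees the divergence theorem applies (one may invoke a standard Gauss--Green theorem for $W^{1,1}$ vector fields on Lipschitz — here smooth — domains, e.g.\ as in \cite{Evans2015}).

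**The main obstacle** is justifying the manipulations at the level of regularity actually assumed: $\mu(\cdot)\in H^1(\Omega)$ and $\phi\in H^1(\Omega,\Omega)$ only give $\mu(\cdot)\phi \in W^{1,1}(\Omega)$ via the Sobolev product rule (using $H^1 \cdot H^1 \hookrightarrow W^{1,1}$ by Cauchy--Schwarz on the cross terms), which is precisely the borderline integrability class for which the trace and the divergence theorem still hold on a smooth domain, so this needs to be stated carefully rather than waved through. A secondary subtlety is the use of $\nabla\log\mu = \nabla\mu/\mu$: one should note that $\supp(\mu(\cdot))=\Omega$ does not by itself bound $\mu$ away from $0$, so $\nabla\log\mu$ is a priori only a distribution; however, the quantity $\mu\,\A_\mu\phi = \nabla\cdot(\mu\phi)$ that actually enters the integral is well-defined in $L^1$ regardless, so I would phrase the argument to work directly with $\nabla\cdot(\mu\phi)$ and treat the $\nabla\log\mu$ form as the formal shorthand the paper already adopts. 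Finally, since the paper emphasizes formalized proofs, I would point out that this argument reduces to the divergence theorem plus an a.e.\ product rule, both available in the relevant libraries.
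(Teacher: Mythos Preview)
Your proposal is correct and follows essentially the same route as the paper: both arguments reduce the expectation to a boundary integral via the divergence theorem (integration by parts) on the smooth compact domain $\Omega$, invoking the $H^1$ regularity of $\mu(\cdot)$ and $\phi$ and the boundary condition in the definition of $\cal{S}(\mu)$. The only cosmetic difference is that you first recognize $\mu\,\A_\mu\phi = \nabla\cdot(\mu\phi)$ and then integrate, whereas the paper integrates term by term and applies integration by parts to $\int_\Omega \mu\,(\nabla\cdot\phi)\dif x$; your additional care about the $W^{1,1}$ regularity of $\mu\phi$ and the meaning of $\nabla\log\mu$ goes beyond what the paper spells out but is consistent with it.
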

(See proof in \cref{app:proof-lemma-stein-identity}).
Now, one can consider:
\begin{equation}\label{eq:stein-identity}
  \E_{x \sim \mu}[\A_\pi \phi(x)] \textrm{ , where } \phi \in \cal{S}(\pi).
\end{equation}
If $\mu \neq \pi$, \cref{eq:stein-identity} would no longer be null for any $\phi$ in $\cal{S}(\pi)$. In fact, the magnitude of this expectation relates to how different $\mu$ and $\pi$ are, and is used to define a discrepancy measure, known as the {\it Stein discrepancy} \cite{Gorham2015}. The latter considers the ``maximum violation of Stein's identity'' given a proper set of functions $\cal{F}\subseteq \cal{S}(\pi)$:
\begin{equation}\label{eq:stein-discrepancy}
  \bb{S}(\mu, \pi) = \max_{\phi \in \cal{F}} \left\{ \E_{x \sim \mu}[\A_\pi \phi(x)] \right\}.
\end{equation}
Note that $\bb{S}(\mu, \pi)$ is not symmetric. The set $\cal{S}(\pi)$ might be different to $\cal{S}(\mu)$, and even if they are equal, inverting the densities in the expectation leads to a different result.
The choice of $\cal{F}$ is crucial as it determines the discriminative power and tractability of the Stein discrepancy. It also has to be included in $\cal{S}(\pi)$. Traditionally, $\cal{F}$ is chosen to be the set of all functions with bounded Lipschitz norms, but this choice casts a challenging functional optimization problem. To overcome this difficulty, \cite{Liu2016Kernel} chose $\cal{F}$ to be a universal vector-valued RKHS, which allows to find closed-form solution to \cref{eq:stein-discrepancy}. The Stein discrepancy restricted to that RKHS is known as {\it Kernelized Stein Discrepancy}.
\subsubsection{Kernelized Stein Discrepancy}\label{sec:ksd}
From now on, we consider $\mu, \pi \in \Po$ such that $\pi$ is the target distribution.
Next, we define the vector-valued RKHS that will be used in the Kernelized Stein Discrepancy.
\begin{definition}[Product RKHS \cite{Liu2016}]\label{def:product-rkhs}
  Let $k : \Omega \times \Omega \to \R$ be a continuous, symmetric, and integrally positive-definite kernel such that $\forall x \in \Omega, k(\cdot, x) \in \cal{S}(\mu) \cap \cal{S}(\pi)$ and $\nabla_{xy} k(x, y) \in L^2_\mu(\Omega)$ (in the distributional sense). 
  Using the Moore–Aronszajn theorem \cite{Aronszajn1950},
  we consider the associated real-valued RKHS $\H_0$. Let $\H$ be the product RKHS induced by $\H_0$, \ie $\forall f = (f_1, \dots, f_d)^\top$, $f \in \H \iff \forall 1 \leq i \leq d, f_i \in \H_0$. The inner product of $\H$ is defined by
  $$
  \ket{f, g}_\H = \sum_{1 \leq i \leq d} \ket{f_i, g_i}_{\H_0}.
  $$
  For more details, see Lean formalization\alink{gaetanserre.fr/assets/Lean/SBS/html/RKHS.lean.html}\!.
\end{definition}
Let $\LuO$ be the set of functions from $\Omega$ to $\R$ that are square-integrable \wrt $\mu$. Let $\LuOO$ be the set of functions from $\Omega$ to $\Omega$ that are component-wise in $\LuO$, \ie
$$
\forall f \in \LuOO, \: \forall 1 \leq i \leq d, f_i \in \LuO.
$$
As $k$ is integrally positive-definite, $\H_0$ is dense in $\LuO$ (see \cite{Sriperumbudur2011}), which shows its expressiveness. We proved that the integral operator
\begin{align*}
  T_\mu : \; \LuO &\to \LuO \\
  f &\mapsto \int_\Omega k(\cdot, x) f(x) \dif \mu(x)
\end{align*}
is a mapping from $\LuO$ to $\H_0$, \ie $T_\mu : \LuO \to \H_0$. (See proof in \cref{app:proof-Tk-mapping}). This allows to define another integral operator
\begin{align*}
  S_\mu : \; \LuOO &\to \H \\
    f &\mapsto (T_\mu f^{(1)}, \dots, T_\mu f^{(d)})^\top,
\end{align*}
where $T_\mu$ is applied component-wise. The proof in \cref{app:proof-Tk-mapping} also shows that $\H$ is a subset of $\LuOO$. Thus, we can define the inclusion map
$$
\iota : \H \inclusion \LuOO,
$$
whose adjoint is $\iota^\star = S_\mu.$ Then, have the following equality:
\begin{align*}
&\forall f \in \LuOO, \forall g \in \H, \\
&\ket{f, \iota g}_{\LuOO} = \ket{\iota^\star f, g}_\H = \ket{S_\mu f, g}_\H.
\end{align*}
We can now define the KSD.
\begin{definition}[Kernelized Stein Discrepancy \cite{Liu2016Kernel}]
  Let $\H$ be a product RKHS as defined in \cref{def:product-rkhs}.  The {\it Kernelized Stein Discrepancy} (KSD) is then defined as:
  $$
  \K(\mu | \pi) = \max_{f \in \H} \left\{\E_{x \sim \mu}[\A_\pi f(x)] \; \middle| \; \norm{f}_\H \leq 1\right\}.
  $$
\end{definition}
The construction of $\H$ was motivated by the fact that the closed-form solution of the KSD is given by the following theorem.
\begin{theorem}[Steepest trajectory \cite{Liu2016Kernel}]\label{theorem:steepest-trajectory}
  The function that maximizes the KSD is given by:%
  $$
  \frac{\phi_\mu^\star}{\norm{\phi_\mu^\star}_\H} = \argmax{f \in \H} \left\{\E_{x \sim \mu}[\A_\pi f(x)] \; \middle| \; \norm{f}_\H \leq 1\right\}.
  $$
  where $\phi_\mu^\star = \E_{x \sim \mu} [\nabla \log \pi(x) k(\cdot, x) + \nabla_x k(\cdot, x)]$. As $\supp(\pi) = \Omega$, $\phi_\mu^\star$ is well-defined. It is the steepest trajectory in $\H$ that maximizes $\K(\mu | \pi)$. The KSD is then given by
  $$
  \K(\mu | \pi) = \E_{x \sim \mu}[\A_\pi \phi_\mu^\star(x)].
  $$
\end{theorem}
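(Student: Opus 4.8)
The plan is to reduce the constrained maximization defining $\K(\mu \mid \pi)$ to the maximization of a \emph{linear} functional over the unit ball of $\H$. The entire argument hinges on the representation
\[
\E_{x \sim \mu}[\A_\pi f(x)] = \ket{f, \phi_\mu^\star}_\H \qquad \text{for every } f \in \H ,
\]
after which the statement follows in one line from Hilbert-space geometry.

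To obtain this representation I would expand the Stein operator using the reproducing structure of $\H_0$. Writing $f = (f_1, \dots, f_d)^\top \in \H$, the reproducing property gives $f_i(x) = \ket{f_i, k(\cdot, x)}_{\H_0}$, and the derivative reproducing property \cite{Zhou2008} gives $\diff_i f_i(x) = \ket{f_i, \diff_{x_i} k(\cdot, x)}_{\H_0}$; the latter is legitimate because the hypotheses on $k$ in \cref{def:product-rkhs} (continuity, integral positive-definiteness, and $\nabla_{xy} k(x,y) \in L^2_\mu(\Omega)$) ensure $\diff_{x_i} k(\cdot, x) \in \H_0$. Hence, pointwise,
\[
\A_\pi f(x) = \sum_{i=1}^{d} \big( \diff_i \log \pi(x)\, f_i(x) + \diff_i f_i(x) \big) = \sum_{i=1}^{d} \ket{f_i,\; \diff_i \log \pi(x)\, k(\cdot, x) + \diff_{x_i} k(\cdot, x)}_{\H_0}.
\]
Taking $\E_{x \sim \mu}$ and exchanging it with the inner product — interpreting $x \mapsto \diff_i \log \pi(x)\, k(\cdot, x) + \diff_{x_i} k(\cdot, x)$ as an $\H_0$-valued Bochner integral, whose integrability comes from $\mu,\pi \in \Po$ (so that $\nabla \log \pi$ is a weak $L^2_\mu$ gradient), $k(\cdot, x) \in \cal{S}(\mu) \cap \cal{S}(\pi)$, and the $L^2_\mu$ bound on $\nabla_{x} k$ — gives $\E_{x \sim \mu}[\A_\pi f(x)] = \sum_i \ket{f_i, (\phi_\mu^\star)_i}_{\H_0} = \ket{f, \phi_\mu^\star}_\H$, with $(\phi_\mu^\star)_i = \E_{x \sim \mu}[\diff_i \log \pi(x)\, k(\cdot, x) + \diff_{x_i} k(\cdot, x)]$. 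This simultaneously shows $\phi_\mu^\star$ is well-defined and lies in $\H$: the first summand of $(\phi_\mu^\star)_i$ equals $T_\mu(\diff_i \log \pi)$, which maps into $\H_0$ as shown in \cref{app:proof-Tk-mapping}.

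With the representation in hand, $\K(\mu \mid \pi) = \max\{ \ket{f, \phi_\mu^\star}_\H : \norm{f}_\H \le 1 \}$, so Cauchy--Schwarz gives $\ket{f, \phi_\mu^\star}_\H \le \norm{\phi_\mu^\star}_\H$ with equality \emph{only} for $f = \phi_\mu^\star / \norm{\phi_\mu^\star}_\H$ when $\phi_\mu^\star \ne 0$ (and if $\phi_\mu^\star = 0$ then $\K(\mu\mid\pi)=0$, hence $\mu = \pi$ by \cref{lemma:ksd-valid-discrepancy}). This yields both the uniqueness and the claimed form of the maximizer, and substituting it back gives $\K(\mu \mid \pi) = \norm{\phi_\mu^\star}_\H$, equivalently $\E_{x \sim \mu}[\A_\pi \phi_\mu^\star(x)] = \ket{\phi_\mu^\star, \phi_\mu^\star}_\H$. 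I expect the main obstacle to be the analytic bookkeeping in the middle step rather than any conceptual difficulty: on a compact domain with only $H^1$ densities, $\nabla \log \pi$ is a weak gradient, so one must carefully verify that the derivative reproducing identity truly holds in $\H_0$ and that the $\H_0$-valued integrand is Bochner integrable (using $\norm{k(\cdot, x)}_{\H_0} = \sqrt{k(x,x)}$, which is bounded by continuity of $k$ and compactness of $\Omega$, together with the stated $L^2_\mu$ bounds). Once these points are settled, the remainder is the elementary Cauchy--Schwarz computation.
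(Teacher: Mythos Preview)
Your proposal is correct and follows essentially the same route as the paper: establish the representation $\E_{x\sim\mu}[\A_\pi f(x)]=\ket{f,\phi_\mu^\star}_\H$ via the reproducing and derivative-reproducing properties (citing \cite{Zhou2008}), then conclude by Cauchy--Schwarz, with membership $\phi_\mu^\star\in\H$ handled through the integral operator $T_\mu$ of \cref{app:proof-Tk-mapping}. One minor caution: your appeal to \cref{lemma:ksd-valid-discrepancy} in the degenerate case $\phi_\mu^\star=0$ is circular in the paper's dependency order (that lemma is proved using the present theorem), but this is harmless since the normalized maximizer is simply undefined in that case and the paper's own proof does not treat it either.
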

The proof strategy is to remark that, for any function $f \in \H$, $\E_{x \sim \mu}[\A_\pi f(x)] = \ket{f, \phi_\mu^\star}_\H$. Then, the result follows from the Cauchy-Schwarz inequality.
(See proof in \cref{app:proof-steepest-trajectory}). This leads to the following result of the \svgd theory.
\begin{theorem}[KL steepest descent trajectory \cite{Liu2016}]\label{theorem:kl-steepest-descent-trajectory}
  Let $\H$ be a product RKHS (\cref{def:product-rkhs}). Let $\phi_\mu^\star \in \H$ be as defined in \cref{theorem:steepest-trajectory}. Let $\eps > 0$ and%
  \begin{align*}
    T_\eps : (\Omega \to \Omega) &\to \Omega \\
    \phi &\mapsto I_d + \eps \phi.
  \end{align*}%
  Then,
  \vspace{-1em}
  $$
  \argmin{\phi \in \H} \left\{ \nabla_\eps \KL(T_\eps(\phi)_\#\mu || \pi) |_{\eps = 0} \; \middle| \; \norm{\phi}_\H \leq 1 \right\} = \frac{\phi_\mu^\star}{\norm{\phi_\mu^\star}_\H},
  $$
  and \ \  
  $
  \nabla_\eps \KL((I_d + \eps \phi_\mu^\star)_\#\mu || \pi) |_{\eps = 0} = -\K(\mu | \pi).
  $
\end{theorem}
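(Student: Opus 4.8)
The plan is to reduce both assertions to \cref{theorem:steepest-trajectory} by computing the first variation of the $\KL$ divergence along the perturbation $T_\eps = I_d + \eps\phi$. The heart of the argument is to establish, for every $\phi \in \H$ (which, by the kernel assumptions in \cref{def:product-rkhs}, lies in $\cal{S}(\mu) \cap \cal{S}(\pi) \cap H^1(\Omega)$), the identity
$$
\nabla_\eps\, \KL\!\left((I_d + \eps\phi)_\#\mu \,\|\, \pi\right)|_{\eps = 0} \;=\; -\,\E_{x \sim \mu}\!\left[\A_\pi \phi(x)\right].
$$
Everything downstream of this identity is algebra with results already available in the excerpt.

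For the identity, I would carry the classical computation of \cite{Liu2016} over to the compact domain. Since $\Omega$ is compact and $k$ is $C^1$, each $\phi \in \H$ has bounded Jacobian on $\Omega$, so $T_\eps = I_d + \eps\phi$ is a $C^1$ diffeomorphism of $\Omega$ for all sufficiently small $\eps$, with $\det \nabla T_\eps \to 1$ uniformly as $\eps \to 0$. Expressing the density of $(T_\eps)_\#\mu$ through this diffeomorphism and changing variables back yields
$$
\KL\!\left((T_\eps)_\#\mu \,\|\, \pi\right) = \int_\Omega \mu(x)\Big[\log\mu(x) - \log\pi\!\left(T_\eps(x)\right) - \log\big|\det\nabla T_\eps(x)\big|\Big]\dif x .
$$
The first term is independent of $\eps$; differentiating the other two under the integral sign (justified by dominated convergence, using the uniform bounds available on the compact $\Omega$, $\supp(\pi) = \Omega$, and the integrability of $\nabla\log\pi$ built into $\Po$), together with $\nabla_\eps \log\pi(T_\eps(x))|_{\eps=0} = \nabla\log\pi(x)^\top\phi(x)$ and Jacobi's formula $\nabla_\eps \log|\det\nabla T_\eps(x)|\,|_{\eps=0} = \trace(\nabla\phi(x)) = \nabla\cdot\phi(x)$, gives exactly $-\E_{x\sim\mu}[\nabla\log\pi(x)^\top\phi(x) + \nabla\cdot\phi(x)] = -\E_{x\sim\mu}[\A_\pi\phi(x)]$.

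With the identity established, minimizing $\nabla_\eps\KL|_{\eps=0}$ over $\{\,\norm{\phi}_\H \leq 1\,\}$ is equivalent to maximizing $\E_{x\sim\mu}[\A_\pi\phi(x)]$. By the key step already used in the proof of \cref{theorem:steepest-trajectory}, $\E_{x\sim\mu}[\A_\pi\phi(x)] = \ket{\phi, \phi_\mu^\star}_\H$ for every $\phi \in \H$; hence Cauchy--Schwarz yields the unique maximizer $\phi = \phi_\mu^\star / \norm{\phi_\mu^\star}_\H$ (for $\mu \neq \pi$, so that $\phi_\mu^\star \neq 0$), which is the first claimed equality. For the second, substituting $\phi = \phi_\mu^\star$ into the identity gives $\nabla_\eps\KL((I_d + \eps\phi_\mu^\star)_\#\mu \,\|\, \pi)|_{\eps=0} = -\E_{x\sim\mu}[\A_\pi\phi_\mu^\star(x)] = -\K(\mu|\pi)$, the last equality being the closed-form value of the KSD from \cref{theorem:steepest-trajectory}.

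The main obstacle is making the $\KL$-variation identity rigorous over the compact domain $\Omega$ rather than over $\R^d$: one has to control the boundary $\diff\Omega$ — ensuring the perturbed transport does not push $\mu$-mass off $\supp(\pi)$ in a way that renders the $\KL$ infinite for small $\eps$, which is precisely where the Stein-class condition $\oint_{\diff\Omega}\mu(x)\,\phi(x)\cdot\vec{n}(x)\dif S(x) = 0$ is invoked — and to justify interchanging differentiation and integration. Once that identity is secured, the remainder is the classical \svgd argument of \cite{Liu2016} transcribed into the present framework.
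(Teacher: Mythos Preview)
Your approach is correct and essentially the same as the paper's: both derive $\nabla_\eps\KL|_{\eps=0} = -\E_{x\sim\mu}[\A_\pi\phi(x)]$ via change of variables and Jacobi's formula for the log-determinant, then invoke \cref{theorem:steepest-trajectory}; the paper merely inserts the intermediate identity $\KL(T_\#\mu\,\|\,\pi)=\KL(\mu\,\|\,T^{-1}_\#\pi)$ before differentiating, which produces exactly the integral you wrote down directly. One minor remark: the Stein-class boundary condition is not what guarantees $T_\eps$ is a self-map of $\Omega$ (that would need $\phi\cdot\vec{n}=0$ pointwise on $\diff\Omega$, not merely $\oint_{\diff\Omega}\mu\,\phi\cdot\vec{n}\dif S=0$), and the paper simply assumes $T^{-1}(\Omega)=\Omega$ for small $\eps$ without invoking that condition, so your attribution there is slightly off though not a substantive gap.
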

\vspace{0.5em}
(See proof in \cref{app:proof-kl-steepest-descent-trajectory}). This last result is the key of the \svgd algorithm. It means that $\phi_\mu^\star$ is the optimal direction (within $\H$) to update $\mu$ in order to minimize the KL-divergence between $\mu$ and $\pi$. As $\mathbf{0} \in \H$ (that nullifies the gradient), the result ensures that the gradient of $g : \eps \mapsto \KL(T_\eps(\phi_\mu^\star / \norm{\phi_\mu^\star}_\H)_\#\mu || \pi)$ is at most $0$ and thus $g$ is decreasing over $[0, \delta]$, for $\delta > 0$ small enough. Consequently, \svgd iteratively updates $\mu$ in the direction induced by $\phi_\mu^\star$, with a small step size $\eps$:
\begin{equation}\label{eq:iterative-svgd}
  \mu_{i+1} = (I_d + \eps \phi_{\mu_i}^\star)_{\#} \mu_i.
\end{equation}
Furthermore, we have the following lemma.
\begin{lemma}\label{lemma:kl-positive}
  Let $\H$ be a product RKHS as defined in \cref{def:product-rkhs}. Then, $\phi_\mu^\star \in \H$ as defined in \cref{theorem:steepest-trajectory}. We have that
  $$
  \K(\mu | \pi) = \left\lVert \phi_\mu^\star \right\rVert^2_\H.
  $$
\end{lemma}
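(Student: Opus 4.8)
The plan is to derive this as an immediate corollary of \cref{theorem:steepest-trajectory} together with the inner-product identity that underlies its proof. Recall that the proof of \cref{theorem:steepest-trajectory} rests on the observation that for every $f \in \H$ one has
$$
\E_{x \sim \mu}[\A_\pi f(x)] = \ket{f, \phi_\mu^\star}_\H,
$$
where $\phi_\mu^\star = \E_{x \sim \mu}[\nabla \log \pi(x) k(\cdot, x) + \nabla_x k(\cdot, x)] \in \H$ (well-defined since $\supp(\pi) = \Omega$, as noted there). This identity is what makes the Cauchy--Schwarz argument work, and it is exactly the tool I would reuse.

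First I would invoke \cref{theorem:steepest-trajectory} to write $\K(\mu | \pi) = \E_{x \sim \mu}[\A_\pi \phi_\mu^\star(x)]$. Then I would apply the displayed identity above with the particular choice $f = \phi_\mu^\star$, which is legitimate precisely because $\phi_\mu^\star \in \H$. This yields
$$
\K(\mu | \pi) = \E_{x \sim \mu}[\A_\pi \phi_\mu^\star(x)] = \ket{\phi_\mu^\star, \phi_\mu^\star}_\H = \left\lVert \phi_\mu^\star \right\rVert_\H^2,
$$
which is the claim. In particular this also re-confirms $\K(\mu|\pi) \ge 0$, consistent with it being a discrepancy.

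Since the substantive analytic work — establishing that $\A_\pi$ acting on $\H$ elements can be represented via the $\H$-inner product against $\phi_\mu^\star$ (which in turn relies on the derivative-reproducing property of $k$ and on $k(\cdot,x) \in \cal{S}(\pi)$) — has already been carried out in the proof of \cref{theorem:steepest-trajectory}, there is essentially no remaining obstacle here; the only point to be careful about is to explicitly note that $\phi_\mu^\star$ is a valid test function in $\H$ so that the identity may be specialized to it, and that the expectation $\E_{x\sim\mu}[\A_\pi\phi_\mu^\star(x)]$ is finite (again inherited from \cref{theorem:steepest-trajectory}). I would state the argument in two or three lines with pointers back to \cref{theorem:steepest-trajectory}.
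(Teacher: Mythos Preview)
Your proposal is correct and essentially identical to the paper's proof: both invoke the identity $\E_{x \sim \mu}[\A_\pi f(x)] = \ket{f, \phi_\mu^\star}_\H$ established in the proof of \cref{theorem:steepest-trajectory} and specialize it to $f = \phi_\mu^\star$, together with $\K(\mu|\pi) = \E_{x\sim\mu}[\A_\pi\phi_\mu^\star(x)]$ from that theorem. The paper's version is just a two-line compression of exactly what you wrote.
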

\begin{proof}
  We showed in \cref{app:proof-steepest-trajectory} that 
  $$
  \E_{x \sim \mu}[\A_\pi f(x)] = \langle f, \phi_\mu^\star \rangle_\H
  $$
  for any $f \in \H$. Thus, $\E_{x \sim \mu}[\A_\pi \phi_\mu^\star(x)] = \langle \phi_\mu^\star, \phi_\mu^\star \rangle_\H$.
\end{proof}

In order to use results in \cref{sec:sbs-theory}, we need to prove that $\phi_\mu^\star \in \cal{S}(\mu).$ Given the assumptions of the kernel, $\phi_\mu^\star$ lies in $H^1(\Omega)$. Moreover, as $\phi_\mu^\star \in \H$ and $k(\cdot, x) \in \cal{S}(\mu)$, \cite[Proposition 3.5]{Liu2016Kernel} gives the rest of the proof. This allows to use \cref{lemma:stein-identity} with $\phi_\mu^\star$, for any $\mu \in \Po$. We can now study the time-derivative of the measure net and the KL-divergence between $\mu$ and $\pi$.

\begin{theorem}[Time derivative of a measure flow \cite{Liu2017}]\label{theorem:time-derivative-measure-net}
  Let $\phi : \R_{\geq 0} \times \Omega \to \Omega$, $\phi(t, \cdot) = \phi_t(\cdot)$ be a vector field and $\mu \in \Po$. Let $(T_t)_{0 \leq t} : \Omega \to \Omega$ be a locally Lipschitz family of diffeomorphisms, representing the trajectories associated with the vector field $\phi_t$, and such that $T_0 = I_d$. Let $\mu_t = {T_t}_\#\mu$. Then, $\mu_t$ is the unique solution of the following nonlinear transport equation:
  \begin{equation}\label{eq:nonlinear-transport-equation}
    \begin{cases}
      \frac{\diff \mu_t}{\diff t} = - \nabla \cdot (\phi_t \mu_t), \forall t > 0 \\
      \mu_0 = \mu
    \end{cases}
  \end{equation}
where $(\nabla \cdot)$ is the divergence operator, in the distributional sense (see details in \cref{app:proof-time-derivative-measure-net}). Moreover, the sequence $(\mu_i)_{i \in \N}$, constructed by SVGD, is a discretized solution of \cref{eq:nonlinear-transport-equation}, considering the vector field $\phi_{\mu_t}^\star$.
  One can consider the resulting flow of measures:
  \begin{align*}
    \Phi : \R_{\geq 0} \times \Po &\to \Po, \\
    (t , \mu) &\mapsto \Phi_t(\mu) = \mu_t.
  \end{align*}
\end{theorem}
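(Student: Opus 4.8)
The plan is to read Theorem~\ref{theorem:time-derivative-measure-net} as a standard well-posedness statement for the continuity (transport) equation: establish existence by a direct test-function computation, uniqueness by the method of characteristics for Lipschitz fields, and the discretization claim as a one-step consistency remark; the flow $\Phi$ is then just the bookkeeping that packages the solution.

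For existence, fix a test function $\psi \in C^\infty_c(\Omega)$ and differentiate $t \mapsto \int_\Omega \psi \dif\mu_t = \int_\Omega \psi(T_t(x)) \dif\mu(x)$, the last equality being the definition of the pushforward. Since $(T_t)$ is the trajectory family of $\phi_t$, one has $\diff_t T_t(x) = \phi_t(T_t(x))$; local Lipschitzness of the field together with the compactness of $\Omega$, which makes the relevant bounds uniform, justifies differentiation under the integral, and the chain rule gives $\diff_t \psi(T_t(x)) = \nabla\psi(T_t(x)) \cdot \phi_t(T_t(x))$. Changing variables back via $y = T_t(x)$, legitimate since $T_t$ is a diffeomorphism of $\Omega$, yields $\diff_t \int_\Omega \psi \dif\mu_t = \int_\Omega \nabla\psi(y)\cdot\phi_t(y)\dif\mu_t(y)$. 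Because $\psi$ is compactly supported in $\Omega$ and $\mu_t$ has an $H^1$ density, integrating by parts produces $\int_\Omega \nabla\psi\cdot\phi_t\dif\mu_t = -\int_\Omega \psi\, \nabla\cdot(\phi_t\mu_t)\dif\lambda$ with no boundary contribution (any boundary term would anyway vanish, as $T_t$ mapping $\Omega$ onto $\Omega$ forces $\phi_t$ to be tangent to $\diff\Omega$, consistent with the Stein-class condition used elsewhere). This is precisely the weak form of $\diff_t\mu_t = -\nabla\cdot(\phi_t\mu_t)$, and $\mu_0 = {T_0}_\#\mu = \mu$.

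For uniqueness, compactness of $\Omega$ upgrades local to global Lipschitzness of $\phi_t$, so the characteristic ODE has a unique flow, namely $T_t$, and the classical superposition/uniqueness principle for the continuity equation (see \cite{Villani2009}) forces any weak solution with datum $\mu$ to coincide with ${T_t}_\#\mu$; a self-contained route is to test a putative second solution $\nu_t$ against $\psi(t,\cdot)$ solving the backward transport equation $\diff_t\psi + \phi_t\cdot\nabla\psi = 0$ with prescribed terminal data, so that $t \mapsto \int_\Omega \psi(t,\cdot)\dif\nu_t$ is constant, which pins down $\nu_t = \mu_t$. The discretization claim is then the observation that $x \mapsto x + \eps\phi(x)$ is the explicit Euler step for $\dot x = \phi(x)$, so the \svgd recursion \cref{eq:iterative-svgd}, $\mu_{i+1} = (I_d + \eps\phi_{\mu_i}^\star)_\#\mu_i$, is the pushforward-level forward-Euler scheme for the continuity equation carrying the measure-dependent field $\phi_{\mu_t}^\star$ of \cref{theorem:steepest-trajectory}; I would only spell out this one-step correspondence and the composition $T_{i\eps} \approx (I_d + \eps\phi^\star_{\mu_{i-1}})\circ\cdots\circ(I_d + \eps\phi^\star_{\mu_0})$, not a quantitative rate. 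Finally $\Phi_t(\mu) \triangleq \mu_t$ is well-defined into $\Po$ once one checks $\mu_t$ is absolutely continuous with full support, which holds because $T_t$ is a diffeomorphism of $\Omega$, and that its density remains in $H^1(\Omega)$ via the change-of-variables/Sobolev chain rule.

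The main obstacle is the regularity bookkeeping: since $T_t$ is only locally Lipschitz in space, $DT_t$ exists merely $\lambda$-almost everywhere, so the change of variables and especially the claim that the pushed-forward density is again in $H^1(\Omega)$ (hence $\mu_t \in \Po$) require care, leaning on Rademacher's theorem and the chain rule for Sobolev compositions with bi-Lipschitz maps. A secondary subtlety, specific to the \svgd instance, is that $\phi_{\mu_t}^\star$ depends on the unknown $\mu_t$, so the existence of the family $(T_t)$ is itself a fixed-point statement; here it sits among the hypotheses, so I would simply record that the existence and uniqueness arguments above apply verbatim to the self-consistent (McKean--Vlasov) equation once such a $(T_t)$ is given, with a local-in-time contraction argument as the route to removing that hypothesis if desired.
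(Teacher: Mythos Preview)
Your proposal is correct and reaches the same conclusions, but the route differs from the paper's. For the transport equation itself, the paper does not carry out the test-function computation: after observing that $\Omega$ is a smooth, complete, connected manifold and that $(T_t)$ is the locally Lipschitz trajectory family of $\phi_t$, it simply invokes \cite[Theorem~5.34]{Villani2003} as a black box to obtain both existence and uniqueness of the weak solution $\mu_t = {T_t}_\#\mu$. You instead unpack existence by hand (differentiate $\int \psi\,\dif\mu_t$, chain rule, change of variables, distributional divergence) and offer uniqueness either via the same Villani reference or via a self-contained backward-transport duality argument. For the discretization claim the two proofs coincide in substance: the paper Taylor-expands $\varphi\bigl(x+\eps\phi_{\mu_i}^\star(x)\bigr)$ against $\mu_i$ and reads off the finite-difference form of \cref{eq:nonlinear-transport-equation}, which is exactly your ``forward Euler at the pushforward level'' remark written out explicitly. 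Your version is more constructive and more careful about the side conditions (Rademacher for $DT_t$, $H^1$ stability of the pushed density so that $\Phi$ indeed lands in $\Po$, the McKean--Vlasov self-consistency of $\phi_{\mu_t}^\star$); the paper's version is shorter and delegates these to the cited reference or leaves them implicit.
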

We give a new proof of this theorem in \cref{app:proof-time-derivative-measure-net}, using optimal transport theory. That proof is more general in $T$, but less constructive. We also prove that the sequence $(\mu_i)_{i \in \N}$ is a discretized solution of \cref{eq:nonlinear-transport-equation} %. %The latter equation 
(note that \cref{eq:nonlinear-transport-equation} has also been extensively studied in \cite{Lu2019}). 
This result allows to study the time-derivative of the KL-divergence between $\mu_t$ and $\pi$.
\begin{theorem}[Time-derivative of the KL-divergence \cite{Liu2017}]\label{theorem:time-derivative-kl}
  Let $(T_t)_{0 \leq t} : \Omega \to \Omega$ be a locally Lipschitz family of diffeomorphisms, representing the trajectories associated with the vector field $\phi^\star_{\mu_t} = S_{\mu_t} \nabla \log \frac{\pi}{\mu_t}$, such that $T_0 = I_d$. Let $\mu_t = {T_t}_\#\mu$. Then, the time derivative of the KL-divergence between $\mu_t$ and $\pi$ is given by
  $$
  \frac{\diff \KL(\mu_t || \pi)}{\diff t} = - \K(\mu_t | \pi).
  $$
  Moreover, as $\K(\mu_t | \pi)$ is nonnegative, the KL-divergence is non-increasing along the net of measures.
\end{theorem}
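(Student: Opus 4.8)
The plan is to differentiate the scalar functional $t \mapsto \KL(\mu_t || \pi)$ along the flow, replace the time derivative of the density by the transport equation of \cref{theorem:time-derivative-measure-net}, integrate by parts twice on the compact domain, and recognize the outcome as the Kernelized Stein Discrepancy through \cref{theorem:steepest-trajectory}. The working assumptions $\mu_t, \pi \in \Po$ (densities in $H^1(\Omega)$ with support all of $\Omega$) and $\phi^\star_{\mu_t} \in \cal{S}(\mu_t)$ are exactly what keeps $\KL(\mu_t || \pi)$ well defined and legitimizes each integration-by-parts step.

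Concretely, I would first write $\KL(\mu_t || \pi) = \int_\Omega \mu_t(x) \log\frac{\mu_t(x)}{\pi(x)} \dif x$ and differentiate under the integral sign — justified because $(T_t)_{0 \leq t}$ is a locally Lipschitz family of diffeomorphisms and the densities lie in $H^1(\Omega)$, so $t \mapsto \mu_t$ is sufficiently regular — obtaining
\begin{equation*}
  \frac{\mathrm{d}}{\mathrm{d} t} \KL(\mu_t || \pi) = \int_\Omega \frac{\diff \mu_t}{\diff t}(x) \Big( \log\frac{\mu_t(x)}{\pi(x)} + 1 \Big) \dif x = \int_\Omega \frac{\diff \mu_t}{\diff t}(x)\, \log\frac{\mu_t(x)}{\pi(x)} \dif x ,
\end{equation*}
where the $+1$ term disappears since $\frac{\mathrm{d}}{\mathrm{d} t}\int_\Omega \mu_t \dif x = \frac{\mathrm{d}}{\mathrm{d} t} 1 = 0$. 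Then I would substitute $\frac{\diff \mu_t}{\diff t} = -\nabla \cdot (\phi^\star_{\mu_t}\mu_t)$ from \cref{eq:nonlinear-transport-equation}, giving $\frac{\mathrm{d}}{\mathrm{d} t}\KL(\mu_t || \pi) = -\int_\Omega \nabla\cdot(\phi^\star_{\mu_t}\mu_t)\,\log\frac{\mu_t}{\pi}\dif x$.

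Next comes the integration by parts on the compact set $\Omega$ (the divergence theorem). A first application moves the divergence onto $\log\frac{\mu_t}{\pi}$; then, writing $\nabla\log\frac{\mu_t}{\pi} = \nabla\log\mu_t - \nabla\log\pi$ and using $\mu_t\nabla\log\mu_t = \nabla\mu_t$, a second application moves the remaining derivative off $\mu_t$. Every boundary contribution is a flux of the form $\oint_{\diff\Omega} g(x)\,\mu_t(x)\,\phi^\star_{\mu_t}(x)\cdot\vec{n}(x)\dif S(x)$, which vanishes by the defining property of the Stein class, since $\phi^\star_{\mu_t} \in \cal{S}(\mu_t)$ (established just before the statement via \cite[Proposition 3.5]{Liu2016Kernel}) and the $H^1$ densities are controlled up to $\diff\Omega$. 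What is left is exactly
\begin{equation*}
  \frac{\mathrm{d}}{\mathrm{d} t}\KL(\mu_t || \pi) = -\int_\Omega \mu_t(x)\Big( \nabla\log\pi(x)^\top \phi^\star_{\mu_t}(x) + \nabla\cdot\phi^\star_{\mu_t}(x) \Big)\dif x = -\,\E_{x\sim\mu_t}\!\big[ \A_\pi \phi^\star_{\mu_t}(x) \big],
\end{equation*}
and by \cref{theorem:steepest-trajectory} the right-hand side equals $-\K(\mu_t | \pi)$, which is the claimed identity. The monotonicity statement then follows immediately from \cref{lemma:kl-positive}, since $\K(\mu_t | \pi) = \norm{\phi^\star_{\mu_t}}_\H^2 \geq 0$ forces $\frac{\mathrm{d}}{\mathrm{d} t}\KL(\mu_t || \pi) \leq 0$.

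I expect the main obstacle to be analytic rather than algebraic: rigorously justifying the interchange of $\frac{\mathrm{d}}{\mathrm{d} t}$ and $\int_\Omega$ (one needs an integrable majorant uniform over compact time intervals, which comes from the local Lipschitz control on $T_t$ together with $H^1$ bounds on the densities), and — the more delicate point — the vanishing of the boundary fluxes, since the factor $\log\frac{\mu_t}{\pi}$ is not compactly supported in $\Omega$; this is precisely where one invokes $\phi^\star_{\mu_t}\in\cal{S}(\mu_t)$ and the fact that densities in $\Po$ stay controlled up to $\diff\Omega$. Everything else is a direct chain of identities already available in the excerpt, and the overall argument parallels that of \cite{Liu2017} adapted to the compact-domain setting.
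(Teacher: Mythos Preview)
Your proof is correct and follows the same path as the paper's: differentiate $\KL(\mu_t || \pi)$ under the integral (the $+1$ term drops since $\int_\Omega\mu_t=1$), substitute the continuity equation from \cref{theorem:time-derivative-measure-net}, and integrate by parts using $\phi^\star_{\mu_t}\in\cal{S}(\mu_t)$ to discard the boundary term. The only cosmetic difference is in the final identification: after the first integration by parts the paper reads the resulting integral as $\langle\iota\phi^\star_{\mu_t},\nabla\log\frac{\mu_t}{\pi}\rangle_{\LuOO}$, passes through the adjoint $\iota^\star=S_{\mu_t}$ and the identity $\phi^\star_{\mu_t}=S_{\mu_t}\nabla\log\frac{\pi}{\mu_t}$ to reach $-\lVert\phi^\star_{\mu_t}\rVert_\H^2$ directly, whereas you perform a second integration by parts to land on $-\E_{x\sim\mu_t}[\A_\pi\phi^\star_{\mu_t}(x)]$ and then invoke \cref{theorem:steepest-trajectory}; the two closures are equivalent.
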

The proof is in \cref{app:proof-time-derivative-kl}.

% Thus, we need to choose $k$ in order to ensure that the integral of $\phi_\mu^\star$ over $\diff \Omega$ is null. An easy way to guarantee this is to choose $k$ such that
% \begin{align*}
%   \forall x \in \Omega, &\lim_{d(\{y\}, \diff \Omega) \to 0} \nabla_x k(y, x) = 0, \text{and} \\
%   &\lim_{d(\{y\}, \diff \Omega) \to 0} \nabla \log \pi(x) k(y, x) = 0, \text{where} \\
%   &d(A, B) = \inf \left\{ \norm{a - b}_2 \; \middle| \; a \in A \land b \in B \right\}.
% \end{align*}
% This would be the case for a modified Gaussian kernel:
% \begin{align*}
%   k(x, y) = \exp \left( -\frac{\norm{x - y}_2^2}{2\sigma^2 f(x, y)} \right),
% \end{align*}
% where $f$ is a positive and symmetric function such that $f$ and its gradient w.r.t. $x$ tend to $0$ near the boundary of $\Omega$ sufficiently fast to ensure the previous conditions. This assumption allows to use \cref{lemma:stein-identity} with $\phi_\mu^\star$, for any $\mu \in \Po$.
%
\section{Proofs}
In the following sections, we provide the proofs of the theorems and lemmas stated in the main text. We also provide Lean proofs of some results. The Lean proofs are available here\alink{gaetanserre.fr/assets/Lean/SBS/index.html}\!.
Note that a collection of all key notations and their meanings is available in \cref{tab:notations}. We also introduce a new quantifier $\almostall{\mu}$, such that, given a predicate $P$ and a measure $\mu$,
$$
  \left[\, \almostall{\mu} x \in E \subseteq \Omega, P(x) \,\right] \triangleq \left[\, \exists A \subseteq E, \mu(A) = \mu(E), \forall x \in A, P(x) \,\right].
$$
This quantifier means that the predicate $P$ is true for almost all $x \in E$ \wrt the measure $\mu$. When the considered measure is the standard Lebesgue measure, we simply write $\almostall{}$. This quantifier can be found in Mathlib (the mathematics library of Lean), noted $\forall^m \; x \; \diff \mu, P \; x$.
\subsection{Proof of \cref{properties:boltzmann-properties}}\label{app:proof-boltzmann-properties}
The continuous \bd is a special case of the nascent minima distribution, introduced in \cite{Luo2019}, that has the generic form
\begin{equation}\label{eq:nascent-minima-distribution}
  m_{f, \Omega}^{(\kappa)}(x) = m^{(\kappa)}(x) = \frac{\tau^\kappa (f(x))} { \int_\Omega \tau^\kappa (f(t)) \dif t},
\end{equation}
where $\tau : \R \to \R_{>0}$ is monotonically decreasing. We have the following theorems for general $\tau$.
\begin{theorem}[Nascent minima distribution properties]\label{theorem:nascent-minima-distribution-properties}
  Let $m^{(\kappa)}$ and $\tau$ be defined in \cref{eq:nascent-minima-distribution}. Then, we have the following properties:
  \begin{itemize}
    \item If $\lambda(X^*) = 0$, then, $\forall x \in \Omega$,
    $$
    \lim_{\kappa \to \infty} m^{(\kappa)}(x) = \begin{cases}
      \infty &\textrm{if } x \in X^* \\
      0 &\textrm{otherwise}
    \end{cases}.
    $$
    \item If $0 < \lambda(X^*)$, then, $\forall x \in \Omega$,
    $$
    \lim_{\kappa \to \infty} m^{(\kappa)}(x) = \begin{cases}
      \lambda(X^*)^{-1} &\textrm{if } x \in X^* \\
      0 &\textrm{otherwise}
    \end{cases}.
    $$
  \end{itemize}
\end{theorem}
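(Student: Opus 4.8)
The plan is to analyze the asymptotic behavior of $m^{(\kappa)}(x) = \tau^\kappa(f(x)) / \int_\Omega \tau^\kappa(f(t))\dif t$ as $\kappa \to \infty$ by rewriting it in terms of ratios relative to the minimal value $f^* = f(x^*)$. Since $\tau$ is positive and monotonically decreasing, $\tau(f^*) = \max_{x \in \Omega}\tau(f(x))$ is well-defined and strictly positive (here I use that $\Omega$ is compact and $f$ continuous, so $f$ attains its minimum). First I would factor out $\tau^\kappa(f^*)$ from numerator and denominator, writing
\begin{equation*}
m^{(\kappa)}(x) = \frac{\left(\tau(f(x))/\tau(f^*)\right)^\kappa}{\int_\Omega \left(\tau(f(t))/\tau(f^*)\right)^\kappa \dif t}.
\end{equation*}
For $t \notin X^*$ we have $f(t) > f^*$, hence $\tau(f(t)) < \tau(f^*)$ and the integrand $r(t)^\kappa$ with $r(t) := \tau(f(t))/\tau(f^*) \in (0,1]$ tends pointwise to $\mathbf{1}_{X^*}(t)$. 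The denominator is then handled by dominated convergence (the integrand is bounded by $1$ and $\Omega$ has finite Lebesgue measure), giving $\int_\Omega r(t)^\kappa \dif t \to \lambda(X^*)$.

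Next I would split into the two cases. \textbf{Case $\lambda(X^*) = 0$:} the denominator tends to $0$. For $x \in X^*$ the numerator is identically $1$, so the ratio tends to $+\infty$; for $x \notin X^*$ the numerator tends to $0$ while the denominator also tends to $0$, so I need a sharper estimate. I would bound the denominator below by the integral over a small sublevel neighborhood $\{t : f(t) < f^* + \eta\}$, which has positive Lebesgue measure for every $\eta > 0$ by continuity of $f$ and since (in a neighborhood of $X^*$) such sets shrink to $X^*$ but remain of positive measure; on this set $r(t)^\kappa \geq (r_\eta)^\kappa$ where $r_\eta = \tau(f^*+\eta)/\tau(f^*)$. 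Comparing the decay rate $r(x)^\kappa$ of the numerator (with $r(x) < r_\eta$ for $\eta$ small, since $f(x) > f^*$) against this lower bound on the denominator shows the ratio $\to 0$. \textbf{Case $\lambda(X^*) > 0$:} the denominator tends to $\lambda(X^*) > 0$, so the result is immediate: for $x \in X^*$ the numerator is $1$ and the ratio tends to $\lambda(X^*)^{-1}$; for $x \notin X^*$ the numerator tends to $0$ and the ratio tends to $0$.

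The main obstacle is the subcase $\lambda(X^*) = 0$ with $x \notin X^*$, where the limit is the indeterminate form $0/0$ and one must quantify the competing exponential decay rates of numerator and denominator. The key is to choose the comparison threshold $\eta$ small enough that $f^* + \eta < f(x)$, so that on the sublevel set $\{f < f^*+\eta\}$ one has $r(t) > r(x)$, making the denominator decay strictly slower than the numerator; since the sublevel set has strictly positive measure (a consequence of $f$ continuous, $x^* \in X^*$ interior-or-boundary of $\Omega$, and $\Omega$ having nonempty interior near minimizers), the lower bound $\lambda(\{f < f^*+\eta\})\, r_\eta^\kappa$ on the denominator dominates. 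A secondary technical point is justifying the dominated convergence step for the denominator, which is routine given $r(t)^\kappa \le 1$ and $\lambda(\Omega) < \infty$. I would also remark that the third property (convergence of $\int_\Omega f\, m^{(\kappa)}\dif\lambda$ to $f^*$) then follows from the Boltzmann case by the same factorization argument combined with a Laplace-type estimate, but that is proved separately in the appendix.
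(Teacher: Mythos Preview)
Your proposal is correct and follows essentially the same approach as the paper: both arguments reduce to comparing exponential rates $\tau^\kappa(f(x))$ against a lower bound on the denominator obtained from a sublevel set of $f$ of positive measure, with dominated convergence (implicit in the paper, explicit in your write-up) handling the $x\in X^*$ case. The only cosmetic difference is that the paper normalizes by $p=\tau(f(x'))$ at the evaluation point and takes $\Omega_p=\{t:f(t)<f(x')\}$ directly, whereas you normalize globally by $\tau(f^*)$ and introduce an auxiliary threshold $\eta$; these are equivalent formulations of the same estimate.
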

\begin{proof}
  Let's prove the two properties together. Let $p = \tau(f(x')) > 0$, $\forall x' \notin X^*$. Then, $\exists \Omega_p$, such that $0 < \lambda(\Omega_p)$, $p < \tau(f(t))$, \ie $f(t) < f(x')$. Thus,
  \begin{align*}
    m^{(\kappa)}(x') &= \frac{p^\kappa}{\int_{\Omega_p} \tau^\kappa(f(t)) \dif t + \int_{\Omega / \Omega_p} \tau^\kappa(f(t)) \dif t } \\
    &\le \frac{p^\kappa}{\int_{\Omega_p} \tau^\kappa(f(t)) \dif t} \\
    &= \frac{1}{\int_{\Omega_p} p^{-\kappa} \tau^\kappa(f(t)) \dif t}.
  \end{align*}
  For any $t$ in $\Omega_p$, $p^{-1} \tau(f(t)) > 1$. Therefore $\lim_{\kappa \to,\infty} \int_{\Omega_p} p^{-\kappa} \tau^\kappa(f(t)) \dif t = \infty$. Hence,
  $$
  \forall x' \notin X^*, \lim_{\kappa \to \infty} m^{(\kappa)}(x) = 0.
  $$
  Now, let's consider any $x'' \in X^*$ and $p = \tau(f(x''))$. We have
  \begin{align*}
    m^{(\kappa)}(x'') &= \frac{p^\kappa}{\int_\Omega \tau^\kappa(f(t)) \dif t} \\
    &= \frac{1}{\int_X^* p^{-\kappa} \tau^\kappa(f(t)) \dif t + \int_{\Omega / X^*} p^{-\kappa} \tau^\kappa(f(t)) \dif t} \\
    &= \frac{1}{\int_{X^*} \dif t + \int_{\Omega / X^*} p^{-\kappa} \tau^\kappa(f(t)) \dif t} \; (\forall t \in X^*, \tau(f(t)) = p) \\
    &= \frac{1}{\lambda(X^*) + \int_{\Omega / X^*} p^{-\kappa} \tau^\kappa(f(t)) \dif t}.
  \end{align*}
  For any $t$ in $\Omega / X^*$, $p^{-1}\tau(f(t)) < 1$. Therefore, $\lim_{\kappa \to \infty} \int_{\Omega / X^*} p^{-\kappa} \tau^\kappa(f(t)) \dif t = 0$. Thus,
  $$
  \forall x'' \in X^*, \lim_{\kappa \to \infty} m^{(\kappa)}(x'') = \begin{cases}
    \infty &\textrm{if } \lambda(X^*) = 0 \\
    \frac{1}{\lambda(X^*)} &\textrm{otherwise}
  \end{cases}.
  $$
\end{proof}
\begin{theorem}[Convergence of expectation]
  $\forall f \in C^0(\Omega, \R)$, the following holds
  $$
  \lim_{\kappa \to \infty} \int_\Omega f(x) \; m^{(\kappa)}(x) \dif x = f^*.
  $$
  Moreover, if $X^* = {x^*}$, we have
  $$
  \lim_{\kappa \to \infty} \int_\Omega x \; m^{(\kappa)}(x) \dif x = x^*.
  $$
\end{theorem}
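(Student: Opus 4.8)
The plan is to reduce both statements to a single mass-concentration estimate. Since $m^{(\kappa)}$ is a probability density, $\int_\Omega f\, m^{(\kappa)} - f^* = \int_\Omega (f(x)-f^*)\,m^{(\kappa)}(x)\dif x$, and the integrand is nonnegative because $f^*=\min_\Omega f$; moreover it is bounded by $M' := \max_\Omega f - f^* < \infty$ since $\Omega$ is compact and $f$ continuous. So the first claim amounts to showing $\int_\Omega (f-f^*)\,m^{(\kappa)} \to 0$ as $\kappa\to\infty$.

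The key lemma I would establish is: for every $\delta>0$, the mass $m^{(\kappa)}\big(\{x\in\Omega : f(x)>f^*+\delta\}\big)$ tends to $0$ as $\kappa\to\infty$. Writing this quantity as the ratio $\big(\int_{\{f>f^*+\delta\}}\tau^\kappa(f(t))\dif t\big)\big/\big(\int_\Omega\tau^\kappa(f(t))\dif t\big)$, I would bound the numerator by $\lambda(\Omega)\,\tau(f^*+\delta)^\kappa$ using that $\tau$ is decreasing, and bound the denominator from below by restricting to $B:=\{x\in\Omega : f(x)<f^*+\delta/2\}$, on which $\tau(f(t))\ge\tau(f^*+\delta/2)$, so the denominator is at least $\lambda(B)\,\tau(f^*+\delta/2)^\kappa$. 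The set $B$ is nonempty (it contains $X^*$) and relatively open in $\Omega$, hence $\lambda(B)>0$ because $\Omega$ equals the closure of its nonempty interior. Combining, $m^{(\kappa)}(\{f>f^*+\delta\}) \le \frac{\lambda(\Omega)}{\lambda(B)}\big(\tau(f^*+\delta)/\tau(f^*+\delta/2)\big)^\kappa$, and the base is $<1$ by strict monotonicity of $\tau$, so the bound vanishes. (When $\lambda(X^*)>0$ one may use $X^*$ in place of $B$ and skip the interior argument; in the Boltzmann case $\tau(s)=e^{-s}$ the ratio is simply $e^{-\kappa\delta/2}$.)

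Given the lemma, the first statement follows by splitting $\int_\Omega(f-f^*)m^{(\kappa)} = \int_{\{f\le f^*+\delta\}}(f-f^*)m^{(\kappa)} + \int_{\{f>f^*+\delta\}}(f-f^*)m^{(\kappa)} \le \delta + M'\,m^{(\kappa)}(\{f>f^*+\delta\})$; letting $\kappa\to\infty$ gives $\limsup_{\kappa}\int_\Omega(f-f^*)m^{(\kappa)}\le \delta$, and letting $\delta\to 0$ gives the limit $0$. For the second statement, suppose $X^*=\{x^*\}$. For fixed $\delta>0$, the set $K_\delta:=\{x\in\Omega : \lVert x-x^*\rVert\ge\delta\}$ is compact, so $c:=\min_{K_\delta}f$ is attained; since $x^*$ is the only minimizer, $c>f^*$. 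Hence $K_\delta\subseteq\{f\ge c\}$, and the lemma gives $m^{(\kappa)}(K_\delta)\to 0$. Then $\big\lVert\int_\Omega x\,m^{(\kappa)}(x)\dif x - x^*\big\rVert \le \int_\Omega\lVert x-x^*\rVert\,m^{(\kappa)}(x)\dif x \le \delta + \mathrm{diam}(\Omega)\,m^{(\kappa)}(K_\delta)$, whose $\limsup$ in $\kappa$ is $\le\delta$; letting $\delta\to 0$ concludes.

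The step I expect to be the main obstacle is the lower bound on the normalizing constant $\int_\Omega\tau^\kappa(f(t))\dif t$: one needs a set of positive Lebesgue measure on which $f$ stays within $\delta/2$ of its minimum. This is precisely where the assumed regularity of $\Omega$ enters — that the relatively-open, nonempty sublevel set $\{f<f^*+\delta/2\}$ cannot be Lebesgue-null — and handling it cleanly (including the case where all minimizers lie on $\partial\Omega$) is the only nontrivial point; everything else is elementary estimation.
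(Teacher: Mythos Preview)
Your proof is correct and follows the same strategy as the paper: write $\int_\Omega f\,m^{(\kappa)}-f^*=\int_\Omega(f-f^*)\,m^{(\kappa)}$, split over the sublevel set $\{f<f^*+\delta\}$ and its complement, bound the first part by $\delta$ and show the complement carries vanishing $m^{(\kappa)}$-mass (and for the second claim the paper likewise sets $\Omega_\delta=\{\lVert x-x^*\rVert<\delta\}$, exactly as you do). The only difference is that the paper obtains the mass-concentration step by appealing to its preceding pointwise-limit theorem, whereas you prove it directly via the ratio bound $\frac{\lambda(\Omega)}{\lambda(B)}\bigl(\tau(f^*+\delta)/\tau(f^*+\delta/2)\bigr)^\kappa\to 0$; your direct argument is arguably cleaner (pointwise convergence alone does not obviously yield the integral statement), and your attention to $\lambda(B)>0$ makes explicit a point the paper leaves implicit.
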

\begin{proof}
  If $f$ is constant, it is straightforward as $m^{(\kappa)}$ is a PDF. Suppose $f$ not constant on $\Omega$. For any $\eps > 0$, let $0 < \delta \triangleq \frac{\eps}{1 + \pare{\max_{x \in \Omega} f(x) - f^*}} \leq \eps$. As $f$ is continuous, $\exists \Omega_\delta = \bra{x \in \Omega \; | \; f(x) - f^* < \delta}$, the corresponding level set. Using \cref{theorem:nascent-minima-distribution-properties}, $\exists K \in \N$ such that
  $$
  \int_{\Omega / \Omega_\delta} m^{(\kappa)}(x) \dif x < \delta
  $$
  holds $\forall \kappa > K$, as $m^{(\kappa)}$ tends to $0$ $\forall x \notin X^*$. Thus,
  \begin{align*}
    0 < &\int_\Omega f(x) m^{(\kappa)}(x) \dif x - f^* \\ 
    = &\int_\Omega f(x) m^{(\kappa)}(x) \dif x - f^* \int_\Omega m^{(\kappa)}(x) \dif x \\
    = &\int_\Omega \pare{f(x) - f^*}m^{(\kappa)}(x) \dif x \\
    = &\int_{\Omega_\delta} \pare{f(x) - f^*}m^{(\kappa)}(x) \dif x \\
     & \;\;\;\;\;\;\;\;\;\;\;\; + \int_{\Omega / \Omega_\delta} \pare{f(x) - f^*}m^{(\kappa)}(x) \dif x \\
    < \; &\delta \int_{\Omega_\delta} m^{(\kappa)}(x) \dif x \\
    & \;\;\;\;\;\;\;\;\;\;\;\; + \pare{\max_{x \in \Omega} f(x) - f^*} \int_{\Omega / \Omega_\delta} m^{(\kappa)}(x) \dif x \\
    < \; &\delta (1 - \delta) + \pare{\max_{x \in \Omega} f(x) - f^*} \delta \\
    < \; &\pare{1 + \pare{\max_{x \in \Omega} f(x) - f^*}} \delta = \eps.
  \end{align*}
  The proof is similar for the second statement, by setting
  $$
  \Omega_\delta = \bra{x \in \Omega | \norm{x - x^*} < \delta}.
  $$
\end{proof}
Letting $\tau = x \mapsto e^{-x}$ gives \cref{properties:boltzmann-properties}.
\subsection{Proof of $f \in C^0(\Omega) \cap W^{1, 4}(\Omega) \implies m^{(\kappa)} \in H^1(\Omega)$}\label{app:proof-sobolev_bd}
\begin{proof}
  As $f$ and $\exp(\cdot)$ lie in $C^0(\Omega)$, $e^{-\kappa f}$ is also in $C^0(\Omega)$. As $\Omega$ is compact, $e^{-2\kappa f}$ is bounded. Thus, $e^{-\kappa f}$ lies in $L^2(\Omega)$:
  $$
  \int_\Omega e^{-2\kappa f(x)} \dif x < \lambda(\Omega) * C < \infty.
  $$
  Moreover, $\forall \alpha \in \N^d$ such that $|\alpha| \leq 1$, we have
  $$
  D^\alpha(e^{-\kappa f}) = -\kappa e^{-\kappa f} D^\alpha f.
  $$
  As $f$ is in $W^{1, 4}(\Omega)$, $D^\alpha f$ is in $L^4(\Omega)$. Thus, $D^\alpha(e^{-\kappa f})$ is also in $L^2(\Omega)$:
  \begin{align*}
  \int_\Omega \left( D^\alpha(e^{-\kappa f(x)}) \right)^2 \dif x &= \int_\Omega -\kappa e^{-2\kappa f(x)} \left( D^\alpha f(x) \right)^2 \dif x\\
  &= \ket{-\kappa e^{-2\kappa f}, \left( D^\alpha f \right)^2}_{L^2(\Omega)}\\
  &\leq \left \lVert -\kappa e^{-2\kappa f} \right \rVert_{L^2(\Omega)} \; \left\lVert \left( D^\alpha f \right)^2 \right\rVert_{L^2(\Omega)}\\
  &= \left \lVert -\kappa e^{-2\kappa f} \right \rVert_{L^2(\Omega)} \; \left \lVert D^\alpha f \right \rVert_{L^4(\Omega)}^2\\
  &< \infty.
  \end{align*}
\end{proof}
\subsection{Proof of \cref{lemma:stein-identity}.}\label{app:proof-lemma-stein-identity}
\begin{proof}
  As $\mu(\cdot)$ and $\phi$ are in $H^1(\Omega)$, and as $\Omega$ is smooth, one can apply the integration by parts formula in $\Omega \subset \R^d$ (see \cite{Evans2015}):
  \begin{align*}
    \E_{x \sim \mu}[\A_\mu \phi(x)] &= \int_\Omega \nabla \log \mu(x)^\top \phi(x) + \nabla \cdot \phi(x) \dif \mu(x) \\
    &= \int_\Omega \mu(x) \pare{\nabla \log \mu(x)^\top \phi(x)} \dif x + \int_\Omega \mu(x) \pare{\nabla \cdot \phi(x)} \dif x \\
    &= \int_\Omega \mu(x) \pare{\nabla \log \mu(x)^\top \phi(x)} \dif x - \int_\Omega \nabla \mu(x)^\top \phi(x) \dif x \\
    &= \int_\Omega \nabla \mu(x)^\top \phi(x) \dif x - \int_\Omega \nabla \mu(x)^\top \phi(x) \dif x\\
    &=0.
  \end{align*}
\end{proof}
\subsection{Proof of $T_\mu$ is a map to $\H_0$}\label{app:proof-Tk-mapping}
\begin{proof}
  As $k$ is continuous, symmetric, and positive-definite and as $\mu(\Omega) < \infty$ and as $T_\mu$ is a self-adjoint operator, we can apply the Mercer's theorem to obtain a sequence of eigenfunctions $(\phi_i)_{i \in \N}$ and a sequence of eigenvalues $(\lambda_i)_{i \in \N}$ of $T_\mu$ such that $(\phi_i)_{i \in I}$ is an orthornormal basis of $\LuO$, such that $(\lambda_i)_{i \in \N}$ is nonnegative and converges to $0$, and such that the following holds:
  $$
  \forall s, t \in \Omega, k(s, t) = \sum_{i = 1}^\infty \lambda_i \phi_i(s) \phi_i(t).
  $$
  The above series converges absolutely and uniformly on $\Omega \times \Omega$.
  Let define the set
  $$
  \H_k = \left\{ f \in \LuO \middle| f = \sum_{i = 1}^\infty \lambda_i a_i \phi_i \land \sum_{i = 1}^{\infty} \lambda_i a_i^2 < \infty \right\},
  $$
  endowed with the inner product
  \begin{equation}\label{eq:inner-product-Hk}
    \forall f, g \in \H_k, \ket{f, g}_{\H_k} = \left\langle \sum_{i = 1}^{\infty} \lambda_i a_i \phi_i, \sum_{i = 1}^{\infty} \lambda_i b_i \phi_i \right\rangle_{\H_k} = \sum_{i = 1}^{\infty} \lambda_i a_i b_i.
  \end{equation}
  Routine works show that \cref{eq:inner-product-Hk} defines a inner product and therefore that $\H_k$ is a Hilbert space (for more details, see Lean proof\alink{gaetanserre.fr/assets/Lean/SBS/html/RKHS\_inner.lean.html}\!). Let's show that $\H_k$ is a RKHS with kernel $k$, \ie, $\forall t \in \Omega$, $k(t, \cdot) \in \H_k$ and, $\forall f \in \H_k$, $f(t) = \ket{f, k(t, \cdot)}_{\H_k}$. Let $t \in \Omega$. First, $\Omega$ is compact, $\mu(\Omega) = 1 < \infty$, and $k(t, \cdot)$ is continuous on $\Omega$, thus $k(t, \cdot) \in \LuO$.
  Then, we have that
  $$
    k(t, \cdot) = \sum_{i = 1}^\infty \lambda_i \phi_i(t) \phi_i,
  $$
  and
  $$
  \sum_{i = 1}^\infty \lambda_i \phi_i^2(t) = k(t, t) < \infty.
  $$
  Thus, $k(t, \cdot) \in \H_k$. Let $f \in \H_k$. One can write
  \begin{align*}
    \ket{f, k(t, \cdot)}_{\H_k} &= \left\langle \sum_{i = 1}^{\infty} \lambda_i a_i \phi_i, \sum_{i = 1}^{\infty} \lambda_i \phi_i(t) \phi_i \right\rangle_{\H_k} \\
    &= \sum_{i = 1}^\infty \lambda_i a_i \phi_i(t) \\
    &= f(t).
  \end{align*}
  Therefore, $\H_k$ is indeed a RKHS with kernel $k$.The Moore–Aronszajn theorem ensures that, given $k$, there exists an unique RKHS such that $k$ is its kernel. Thus, $\H_k = \H_0$. That's prove that $\H_0 \subseteq \LuO \implies \H \subseteq \LuOO.$
  Let's now prove that $\forall f \in \LuO$, $T_\mu f \in \H_0$. Let $f \in \LuO$. We begin by proving that $T_\mu f \in \LuO$. 
  \begin{align*}
    \abs{T_\mu f (t)} &= \left| \int_\Omega k(t, s) f(s) \dif \mu(s)\right| \\
    & \leq \int_\Omega \abs{k(t, s)} \abs{f(s)} \dif \mu(s) \\
    &= \ket{\abs{k(t, \cdot)}, \abs{f}}_{\LuO} \\
    &\leq \norm{k(t, \cdot)}_{\LuO} \; \norm{f}_{\LuO}.
  \end{align*}
  Then,
  \begin{align*}
    \norm{T_\mu f (t)}^2_{\LuO} &= \int_\Omega \abs{T_\mu f (t)}^2 \dif t \\
    &\leq \int_\Omega \norm{k(t, \cdot)}_{\LuO}^2 \dif t \; \norm{f}_{\LuO}^2 \\
    &= \norm{k}_{L^2_\mu}^2 \; \norm{f}_{\LuO}^2 \\
    &< \infty.
  \end{align*}
  We now prove that $T_\mu f \in \H_0$.
  \begin{align*}
    T_\mu f &= \int_\Omega k(\cdot, s) f(s) \dif \mu(s) \\
    &= \int_\Omega \sum_{i = 1}^{\infty} \lambda_i f(s) \phi_i(s) \phi_i(\cdot) \dif \mu(s) \\
    &= \sum_{i = 1}^{\infty} \lambda_i \phi_i(\cdot) \int_\Omega f(s) \phi_i(s) \dif \mu(s) \\
    &= \sum_{i = 1}^{\infty} \lambda_i \ket{f, \phi_i}_{\LuO} \phi_i.
  \end{align*}
  As $(\phi_i)_{i \in \N}$ is an orthonormal basis of $\LuO$ we have that
  $$
  \int_\Omega \phi_i \phi_j \dif \mu = \bbm{1}_{\{i = j\}},
  $$
  which implies, using Parseval's equality, that
  $$
  \sum_{i = 1}^{\infty} \ket{f, \phi_i}^2_{\LuO} = \norm{f}^2_{\LuO} < \infty.
  $$
  As $(\lambda_i)_{i \in \N}$ converges to $0$, $\exists I \in \N$ such that $\forall i > I$, $\lambda_i < 1$. Thus,
  \begin{align*}
    \sum_{i = 1}^{\infty} \lambda_i \ket{f, \phi_i}^2_{\LuO} &= \sum_{i = 1}^{I} \lambda_i \ket{f, \phi_i}^2_{\LuO} + \sum_{i = I + 1}^{\infty} \lambda_i \ket{f, \phi_i}^2_{\LuO} \\
    &\leq \sum_{i = 1}^{I} \lambda_i \ket{f, \phi_i}^2_{\LuO} + \sum_{i = I + 1}^{\infty} \ket{f, \phi_i}^2_{\LuO} \\
    &\leq \sum_{i = 1}^{I} \lambda_i \ket{f, \phi_i}^2_{\LuO} + \norm{f}^2_{\LuO} \\
    &< \infty.
  \end{align*}
  Therefore, $\forall f \in \LuO$, $T_\mu f \in \H_0$, which proves that $T_\mu : \LuO \to \H_0$.
\end{proof}
\subsection{Proof of \cref{theorem:steepest-trajectory}}\label{app:proof-steepest-trajectory}
\begin{proof}
  First, we show that $\phi_\mu^\star \in \H$, \ie $\forall 1 \leq i \leq d$, $(\phi_\mu^\star)^{(i)} \in \H_0$. Let define the function
  \begin{align*}
    f^{(i)} : \Omega &\to \R, \\
    x &\mapsto \frac{\diff \log \frac{\pi}{\mu}(x)}{\diff x_i}.
  \end{align*}
  As $\supp(\mu) = \Omega$, $f^{(i)}$ is well-defined and, as $\pi$ and $\mu$ are in $H^1(\Omega)$, $f^{(i)}$ is in $L^2(\Omega)$.
  Then, as $\forall x \in \Omega$, $k(\cdot, x) \in \cal{S}(\mu)$, it is easy to show that
  $$
  (\phi_\mu^\star)^{(i)} = T_\mu f^{(i)} \in \H_0.
  $$
  Thus, $\phi_\mu^\star = S_\mu \nabla \log \frac{\pi}{\mu} \in \H$.
  Next, we prove that
  $$
  \forall f \in \H, \E_{x \sim \mu} \left[\A_\pi f(x) \right] = \ket{f, \phi_\mu^\star}_\H.
  $$
  \begin{align*}
    \ket{f, \phi_\mu^\star}_\H &= \sum_{\l = 1}^{d} \left\langle f^{(\l)}, \E_{x \sim \mu} \left[\nabla \log \pi^{(\l)}(x) k(x \cdot) + \nabla_x k^{(\l)} (x, \cdot) \right] \right\rangle_{\H_0} \\
    &= \E_{x \sim \mu} \left[ \sum_{\l = 1}^{d} \ket{f^{(\l)}, \nabla \log \pi^{(\l)}(x) k(\cdot, x) + \nabla_x k^{(\l)} (x, \cdot)}_{\H_0} \right] \\
    &= \E_{x \sim \mu} \left[ \sum_{\l = 1}^{d} \nabla \log \pi^{(\l)}(x) \ket{f^{(\l)}, k(\cdot, x)}_{\H_0} + \ket{f^{(\l)}, \nabla_x k^{(\l)}(x, \cdot) }_{\H_0} \right] \\
    &= \E_{x \sim \mu} \left[ \sum_{\l = 1}^{d} \nabla \log \pi^{(\l)}(x) f^{(\l)}(x) + \frac{\diff f^{(\l)}(x)}{\diff x_\l} \right] \; \text{\cite{Zhou2008}} \\
    &= \E_{x \sim \mu} \left[\nabla \log \pi(x)^\top f(x) + \nabla \cdot f(x) \right].
  \end{align*}
  Moreover, using the Cauchy-Schwarz inequality, we have that
  $$
  \ket{f, \phi_\mu^\star}_\H \leq \norm{f}_\H \norm{\phi_\mu^\star}_\H.
  $$
  Thus, as $\norm{f}_\H \leq 1$,
  $$
  \K(\mu, \pi) \leq \norm{\phi_\mu^\star}_\H.
  $$
  Finally, by letting $f = \frac{\phi_\mu^\star}{\norm{\phi_\mu^\star}_\H}$, we have that
  $$
  \E_{x \sim \mu} \left[\A_\pi f \right] = \ket{f, \phi_\mu^\star}_\H = \norm{\phi_\mu^\star}_\H.
  $$
\end{proof}
\subsection{Proof of \cref{theorem:kl-steepest-descent-trajectory}}\label{app:proof-kl-steepest-descent-trajectory}
\begin{proof}
  Note $T_\eps = T$, $\mu_{[T]}$ the density of $T_\#\mu$ \wrt $\lambda$.
  First, when $\eps$ is sufficiently small, $T$ is close to the identity and is guaranteed to be a one-to-one.
  Using change of variable, we know that $T^{-1}_\# \pi$ admits a density $\pi_{[T^{-1}]}$ \wrt $\lambda$ and
  $$
  \pi_{[T^{-1}]}(x) = \pi(T(x)) \cdot \abs{\det \nabla_x T(x)}, \forall x \in \Omega.
  $$
  \begin{remark}
    It is easy to see that, if $T$ is a one-to-one map, then
    $$
    \forall x \in \Omega, \left( \mu_{[T]} \circ T \right)(x) = \mu(x).
    $$
  \end{remark}
  Let's show that $\KL(T_\#\mu || \pi) = \KL(\mu || T^{-1}_\# \pi)$.
  \begin{align*}
    \KL(T_\#\mu || \pi) &= \int_\Omega \log \left( \frac{\mu_{[T]}(x)}{\pi(x)} \right) \dif T_\#\mu(x) \\
    &= \int_{T^{-1}(\Omega)} \log \left( \frac{(\mu_{[T]} \circ T) (x)}{(\pi \circ T) (x)} \right) \dif \mu(x) \\
    &= \int_{T^{-1}(\Omega)} \log \left( \frac{(\mu_{[T]} \circ T) (x)}{(\pi_{[T^{-1}]} \circ T^{-1} \circ T) (x)} \right) \dif \mu(x) \\
    &= \int_{T^{-1}(\Omega)} \mu(x) \log \left( \frac{\mu(x)}{\pi_{[T^{-1}]}(x)} \right) \dif x \\
    &= \int_{\Omega} \mu(x) \log \left( \frac{\mu(x)}{\pi_{[T^{-1}]}(x)} \right) \dif x \; \left( T^{-1}(\Omega) = \left\{ x \; \middle| \; T^{-1}(x) \in \Omega \right\} = \Omega \right) \\
    &= \KL(\mu || T^{-1}_\# \pi).
  \end{align*}
  For more details, see Lean proof\alink{gaetanserre.fr/assets/Lean/SBS/html/KL.lean.html}\!.
  Thus, we have
  \begin{align*}
    \nabla_\eps \KL(\mu || T^{-1}_\# \pi) &= \nabla_\eps \int_{\Omega} \mu(x) \log \left( \frac{\mu(x)}{\pi_{[T^{-1}]}(x)} \right) \dif x \\
    &= \int_{\Omega} \mu(x) \nabla_\eps \left[ \log (\mu(x)) - \log \left( \pi_{[T^{-1}]}(x) \right) \right] \dif x \\
    &= - \int_\Omega \mu(x) \nabla_\eps \log \left( \pi_{[T^{-1}]}(x) \right) \dif x \\
    &= - \E_{x \sim \mu} \left[ \nabla_\eps \log \left( \pi_{[T^{-1}]}(x) \right) \right].
  \end{align*}
  Now, let's compute $\nabla_\eps \log \left( \pi_{[T^{-1}]}(x) \right)$.
  \begin{align*}
    \nabla_\eps \log \left( \pi_{[T^{-1}]}(x) \right) &= \nabla_\eps \log \left( \pi (T(x)) \cdot \abs{\det(\nabla_x T(x))} \right) \\
    &= \nabla_\eps \log \pi (T(x)) + \nabla_\eps \log \abs{\det(\nabla_x T(x))} \\
    &= \nabla_{T(x)} \log \pi(T(x))^\top \nabla_\eps T(x) + \nabla_\eps \log \abs{\det(\nabla_x T(x))} \\
    &= \nabla_{T(x)} \log \pi(T(x))^\top \nabla_\eps T(x) + \frac{1}{\det(\nabla_x T(x))} \nabla_\eps \det(\nabla_x T(x)) \\
    &= \nabla_{T(x)} \log \pi(T(x))^\top \nabla_\eps T(x) + \frac{1}{\det(\nabla_x T(x))} \sum_{ij} \left( \nabla_\eps \nabla_x T(x)_{ij} C_{ij} \right) \\
    &= \nabla_{T(x)} \log \pi(T(x))^\top \nabla_\eps T(x) + \sum_{ij} \left( \nabla_\eps \nabla_x T(x)_{ij} \left( \nabla_x T(x) \right)^{-1}_{ji} \right) \\
    &= \nabla_{T(x)} \log \pi(T(x))^\top \nabla_\eps T(x) + \trace\left( (\nabla_x T(x))^{-1} \cdot \nabla_\eps \nabla_x T(x) \right),
  \end{align*}
  where $C$ is the cofactor matrix of $\nabla_x T(x)$.
  Finally, the result of the theorem is a special case of the above result. Indeed, $\forall \phi \in \H$, if $T = I_d + \eps \phi$, then
  \begin{itemize}
    \item $T(x)|_{\eps = 0} = x$;
    \item $\nabla_\eps T(x) = \phi(x)$;
    \item $\nabla_x T(x)|_{\eps = 0} = I_d$;
    \item $\nabla_\eps \nabla_x T(x) = \nabla_x \phi(x)$.
  \end{itemize}
  This gives
  $$
  \nabla_\eps \KL(T_\#\mu || \pi)|_{\eps = 0} = -\E_{x \sim \mu} \left[ \nabla \log \pi(x)^\top \phi(x) + \nabla \cdot \phi(x) \right].
  $$
  Applying \cref{theorem:steepest-trajectory} ends the proof.
\end{proof}
\subsection{Proof of \cref{theorem:time-derivative-measure-net}}\label{app:proof-time-derivative-measure-net}
\begin{proof}
  First, as $\Omega$ is a subset of a metric space (Euclidean space) and is compact, it is also complete for the induced metric. In addition, as it is connected, it is also path-connected. These properties combined with the fact that $\Omega$ is smooth ensure that $\Omega$ is a smooth complete manifold. Finally, as $(T_t)_{0 \leq t}$ is a locally Lipschitz family of diffeomorphisms representing the trajectories associated with the vector field $\phi_t$, and as $\mu_t = T_{t\#}\mu$, then, a direct application of \cite[Theorem 5.34]{Villani2003} gives that $\mu_t$ is the unique solution of the nonlinear transport equation
  $$
  \begin{cases}
    \frac{\diff \mu_t}{\diff t} + \nabla \cdot (\mu_t \phi_t) &= 0, \forall t > 0, \\
    \mu_0 &= \mu
  \end{cases},
  $$
  where the divergence operator ($\nabla \cdot$) is defined by duality against smooth compactly supported functions, \ie
  $$
  \forall \mu \in \cal{M}(\Omega), \forall \phi : \Omega \to \Omega, \forall \varphi \in C^\infty_c(\Omega), \ket{T_{\nabla \cdot (\phi \mu)}, \varphi} = - \ket{T_\mu, \phi \cdot \nabla \varphi},
  $$
  where $\cal{M}(\Omega)$ is the set of measures on $\Omega$, for any $\mu$ in $\cal{M}(\Omega)$, $T_\mu$ is the distribution associated with $\mu$, and, for any $\varphi$ in $C^\infty_c(\Omega)$, $\ket{T_\mu, \varphi} = \int_\Omega \varphi \; \dif \mu$ (see also \cite{Villani2009}).
  Furthermore, as $\mu_{i+1} = (I_d + \eps \phi_{\mu_i}^\star)_\# \mu_i$ (see \cref{eq:iterative-svgd}), one can write
  \begin{align*}
    \int_\Omega \varphi \/ \dif \mu_{i+1} = &\int_\Omega \varphi \circ (I_d + \eps \phi_{\mu_i}^\star) \dif \mu_i, \forall \varphi \in C^\infty_c(\Omega). \\
    \underset{\eps \to 0}{\sim} &\int_\Omega \varphi + \eps (\nabla \varphi \cdot \phi_{\mu_i}^\star) \dif \mu_i \left( \text{Taylor expansion of } \varphi(x) \text{ at } x + \eps \phi_{\mu_i}^\star(x) \right) \\
    = &\int_\Omega \varphi \: \dif \mu_i + \int_\Omega \eps \left( \nabla \varphi \cdot \phi_{\mu_i}^\star \right) \dif \mu_i \\
    = &\int_\Omega \varphi \: \dif \mu_i - \int_\Omega \eps \varphi \: \dif \left( \nabla \cdot (\mu_i \phi_{\mu_i}^\star) \right) \\
    \iff \int_\Omega \varphi \: \dif \mu_{i+1} &- \int_\Omega \varphi \: \dif \mu_i = - \eps \int_\Omega \varphi \: \dif \left( \nabla \cdot (\mu_i \phi_{\mu_i}^\star) \right). \\
  \end{align*}
  This shows that iteratively updates $\mu$ in the direction $I_d + \eps \phi_{\mu_i}^\star$, given a small $\eps$, corresponds to a finite difference approximation of the nonlinear transport equation.
\end{proof}
\subsection{Proof of \cref{theorem:time-derivative-kl}}\label{app:proof-time-derivative-kl}
\begin{proof}
  Using the Leibniz integral rule, the time derivative of the KL-divergence writes
  \begin{align*}
    \frac{\diff \KL(\mu_t || \pi)}{\diff t} &= \frac{\diff}{\diff t} \int_\Omega \log \frac{\dif \mu_t}{\dif \pi} \dif \mu_t \\
    &= \int_\Omega \frac{\diff \mu_t(x)}{\diff t} \log \frac{\mu_t(x)}{\pi(x)} \dif x + \int_\Omega \mu_t(x) \frac{\diff \log \frac{\mu_t(x)}{\pi(x)}}{\diff t} \dif x \\
    &= \int_\Omega \frac{\diff \mu_t(x)}{\diff t} \log \frac{\mu_t(x)}{\pi(x)} \dif x + \int_\Omega \mu_t(x) \frac{\diff \log \mu_t(x)}{\diff t} \dif x \\
    &= \int_\Omega \frac{\diff \mu_t(x)}{\diff t} \log \frac{\mu_t(x)}{\pi(x)} \dif x + \int_\Omega \frac{\diff \mu_t(x)}{\diff t} \dif x \\
    &= \int_\Omega \frac{\diff \mu_t(x)}{\diff t} \log \frac{\mu_t(x)}{\pi(x)} \dif x + \frac{\diff}{\diff t} \int_\Omega \mu_t \dif x \\
    &= \int_\Omega \frac{\diff \mu_t(x)}{\diff t} \log \frac{\mu_t(x)}{\pi(x)} \dif x \left( \text{as, } \forall t \geq 0, \int_\Omega \dif \mu_t = 1 \right).
  \end{align*}
  Furthermore, $\mu_t$ is the unique solution of the nonlinear transport equation of \cref{theorem:time-derivative-measure-net}, where $\phi_{\mu_t}^\star = S_{\mu_t} \nabla \log \frac{\pi}{\mu_t}$ (see \cref{app:proof-steepest-trajectory}). Thus, we have
  \begin{align*}
    \frac{\diff \KL(\mu_t || \pi)}{\diff t} &= -\int_\Omega \nabla \cdot (\mu_t(x) \phi_{\mu_t}^\star(x)) \log \frac{\mu_t(x)}{\pi(x)} \dif x \\
    &= \int_\Omega \mu_t(x) \phi_{\mu_t}^\star(x) \cdot \nabla \log \frac{\mu_t(x)}{\pi(x)} \dif x \;\; \left(\phi_{\mu_t}^\star \in \cal{S}_{\mu_t} \right) \\
    &= \int_\Omega \phi_{\mu_t}^\star(x) \cdot \nabla \log \frac{\mu_t(x)}{\pi(x)} \dif \mu_t(x) \\
    &= \left\langle \iota \phi_{\mu_t}^\star, \nabla \log \frac{\mu_t}{\pi} \right\rangle_{\LuOO} \\
    &= \left\langle \phi_{\mu_t}^\star, S_{\mu_t} \nabla \log \frac{\mu_t}{\pi} \right\rangle_\H \\
    &= \left\langle \phi_{\mu_t}^\star, - S_{\mu_t} \nabla \log \frac{\pi}{\mu_t} \right\rangle_\H \\
    &= -\left\langle \phi_{\mu_t}^\star, \phi_{\mu_t}^\star \right\rangle_\H \\
    &= -\left\lVert \phi_{\mu_t}^\star \right\rVert_\H^2 \\
    &= -\K(\mu_t | \pi).
  \end{align*}
\end{proof}
\subsection{Proof of \cref{lemma:ksd-valid-discrepancy}}\label{app:proof-ksd-valid-discrepancy}
\begin{proof}
  We recall that, using \cref{app:proof-steepest-trajectory},
  $$
  \K(\mu | \pi) = \left\lVert \phi_\mu^\star \right\rVert_\H^2 = \E_{x \sim \mu} \left[ \A_\pi \phi_\mu^\star \right].
  $$
  The right implication is straightforward. Assume that $\mu = \pi$. We know that $\phi_\mu^\star$ is in $\cal{S}(\mu) = \cal{S}(\pi)$, thus, using \cref{lemma:stein-identity}, we have that 
  $$
  \E_{x \sim \mu} \left[ \A_\pi \phi_\mu^\star \right] = \K(\mu | \pi) = \E_{x \sim \pi} \left[ \A_\pi \phi_\mu^\star \right] = 0.
  $$
  The left implication is more involved. Assume that $\K(\mu | \pi) = 0$. In \cref{app:proof-steepest-trajectory}, we have shown that
  $$
  \phi_\mu^\star = S_\mu \nabla \log \frac{\pi}{\mu}.
  $$
  This implies that
  $$
  \K(\mu | \pi) = \left\lVert \phi_\mu^\star \right\rVert_\H^2 = \left\langle S_\mu \nabla \log \frac{\pi}{\mu}, S_\mu \nabla \log \frac{\pi}{\mu} \right\rangle_\H = \left\langle \nabla \log \frac{\pi}{\mu}, \iota S_\mu \nabla \log \frac{\pi}{\mu} \right\rangle_{\LuOO}.
  $$
  Thus, one can rewrite the KSD as
  $$
  \K(\mu | \pi) = \int_\Omega \int_\Omega \nabla \log \frac{\pi}{\mu}(x)^\top k(x', x) \nabla \log \frac{\pi}{\mu}(x') \: \dif \mu(x) \: \dif \mu(x').
  $$
  Since $k$ is positive definite, we have that
  $$
  \K(\mu | \pi) = 0 \iff \nabla \log \frac{\pi}{\mu}(x) = 0, \almostall{\mu} x \in \Omega.
  $$
  Moreover, as the density of $\mu$ is supported over $\Omega$, there is no set $E \subset \Omega$ such that $\lambda(E) > 0$ and $\mu(E) = 0$. Thus, a predicate $P(x)$ is true for almost all $x \in \Omega$, w.r.t. $\mu$ if and only if $P(x)$ is true for almost all $x$ in $\Omega$, w.r.t. $\lambda$.
  
  Finally, if $\almostall{} x \in \Omega$, $\nabla \log \frac{\pi}{\mu}(x) = 0$, it implies that $\exists c \in \R_{> 0}$ such that, $\mu(x) = c \pi(x)$. As $\mu(\cdot)$ and $\pi(\cdot)$ are probability densities over $\Omega$, $c = 1$:
  $$
  \mu(\Omega) = 1 = \int_\Omega \mu(x) \dif x = \int_\Omega c \pi(x) \dif x = c \pi(\Omega) = c.
  $$
  Thus,
  $$
  \nabla \log \frac{\pi}{\mu}(x) = 0 \iff \pi(x) = \mu(x), \almostall{} x \in \Omega.
  $$
  For more details, see Lean proof\alink[KSD-Lean]{gaetanserre.fr/assets/Lean/SBS/html/KSD.lean.html}\!.
\end{proof}
%bhn
\subsection{Proof of \cref{lemma:fixed-point}}\label{app:proof-fixed-point}
\begin{proof}
  We first show that $\pi$ is a fixed point of $(\mu : \Po) \mapsto \Phi_t(\mu)$, \ie $\Phi_t(\pi) = \pi$. To do so, recall that
  $$
  \K(\pi | \pi) = \left\lVert \phi_\pi^\star \right\rVert^2_\H.
  $$
  Using the right implication of \cref{lemma:ksd-valid-discrepancy}, we have that
  $$
  \left\lVert \phi_\pi^\star \right\rVert^2_\H = 0,
  $$
  which implies that
  $$
  \iff \phi_\pi^\star(x) = 0, \almostall{\pi} x \in \Omega.
  $$
  Thus, $\almostall{\pi} x \in \Omega$,
  $$
  T_\pi(x) = x + \eps \phi_\pi^\star(x) = x,
  $$
  implying $\Phi_t(\pi) = \pi$.
  
  Then, suppose that $\exists \nu \in \Po$ such that $\nu \neq \pi$ and $\Phi_t(\nu) = \nu$ for any $t \geq 0$. We have that
  $$
  \frac{\partial \KL(\Phi_t(\nu) || \pi)}{\partial t} = 0 = -\K(\nu || \pi).
  $$
  However, using the left implication of \cref{lemma:ksd-valid-discrepancy}, we obtain a contradiction.
  
  For more details, see Lean proof\footnoteref{KSD-Lean}.
\end{proof}
\subsection{Proof of \cref{theorem:weak-convergence}}\label{app:proof-weak-convergence}
\begin{proof}  
  By construction of $\Po$, $\KL(\mu || \pi)$ is finite. Moreover, as stated in \cref{theorem:time-derivative-kl}, $t \mapsto \KL(\mu_t || \pi)$ is decreasing. Thus, it exists a positive real constant $c$, such that, for any sequence $(t_n)_{n \in \N}$ such that $t_n \to \infty$, $\KL(\mu_{t_n} || \pi) \to c$. It implies that, for any such sequence $(t_n)_{n \in \N}$, it exists a subsequence $(t_k)_{k \in \N}$ such that $\mu_{t_k} \rightharpoonup \mu_\infty$, meaning that $\Phi_t(\mu) \rightharpoonup \mu_\infty$ (see \cite[Theorem 2.6]{Billingsley1999}). Therefore, by continuity of $\K(\cdot | \pi)$, $\mu_\infty$ is a fixed point of $\Phi_t$, for any $\mu \in \Po$ such that $\KL(\mu || \pi)$ is finite. Finally, using \cref{lemma:fixed-point}, we have that $\mu_\infty = \pi$.
\end{proof}


\begin{thebibliography}{}

\end{thebibliography}


\begin{thebibliography}{10}

  \bibitem{Aronszajn1950}
  Nachman Aronszajn.
  \newblock {\em Theory of Reproducing Kernels}.
  \newblock Transactions of the American Mathematical Society, 1950.
  
  \bibitem{Azencott1989}
  Robert Azencott.
  \newblock Simulated annealing, 1989.
  
  \bibitem{Billingsley1999}
  Patrick Billingsley.
  \newblock {\em Convergence of Probability Measures}.
  \newblock Wiley, 1999.
  
  \bibitem{Debortoli2021}
  Valentin~De Bortoli and Agnès Desolneux.
  \newblock On quantitative laplace-type convergence results for some exponential probability measures, with two applications, 2021.
  
  \bibitem{Carrillo2018}
  Jos{\'e}~A Carrillo, Young-Pil Choi, Claudia Totzeck, and Oliver Tse.
  \newblock An analytical framework for consensus-based global optimization method.
  \newblock {\em Mathematical Models and Methods in Applied Sciences}, 2018.
  
  \bibitem{Moura2021}
  Leonardo de~Moura and Sebastian Ullrich.
  \newblock The {L}ean 4 theorem prover and programming language.
  \newblock In {\em Automated Deduction {\textendash} {CADE} 28}. Springer International Publishing, 2021.
  
  \bibitem{Duncan2023}
  Andrew Duncan, Nikolas Nüsken, and Lukasz Szpruch.
  \newblock On the geometry of stein variational gradient descent.
  \newblock {\em Journal of Machine Learning Research}, 2023.
  
  \bibitem{Erdogdu2018}
  Murat~A Erdogdu, Lester Mackey, and Ohad Shamir.
  \newblock Global non-convex optimization with discretized diffusions.
  \newblock {\em Advances in Neural Information Processing Systems}, 31, 2018.
  
  \bibitem{Evans2015}
  Lawrence~Craig Evans and Ronald~F. Gariepy.
  \newblock {\em Measure Theory and Fine Properties of Functions, Revised Edition}.
  \newblock Chapman and Hall/CRC, 2015.
  
  \bibitem{Fornasier2021}
  Massimo Fornasier, Timo Klock, and Konstantin Riedl.
  \newblock Consensus-based optimization methods converge globally in mean-field law.
  \newblock {\em arXiv preprint arXiv:2103.15130}, 2021.
  
  \bibitem{Gorham2015}
  Jackson Gorham and Lester Mackey.
  \newblock Measuring sample quality with stein's method.
  \newblock {\em Proceedings of Advances in Neural Information Processing Systems}, 2015.
  
  \bibitem{Grenander1994}
  Ulf Grenander and Michael~I. Miller.
  \newblock Representations of knowledge in complex systems.
  \newblock {\em Journal of the Royal Statistical Society. Series B (Methodological)}, 1994.
  
  \bibitem{Hansen1996}
  Nikolaus Hansen and Andreas Ostermeier.
  \newblock Adapting arbitrary normal mutation distributions in evolution strategies: the covariance matrix adaptation.
  \newblock In {\em Proceedings of IEEE International Conference on Evolutionary Computation}, 1996.
  
  \bibitem{Hwang1980}
  Chii-Ruey Hwang.
  \newblock Laplace's {Method} {Revisited}: {Weak} {Convergence} of {Probability} {Measures}.
  \newblock {\em The Annals of Probability}, pages 1177--1182, 1980.
  
  \bibitem{Kingma2015}
  Diederik Kingma and Jimmy Ba.
  \newblock Adam: A method for stochastic optimization.
  \newblock In {\em Proceedings of the International Conference on Learning Representations}, 2015.
  
  \bibitem{Kirkpatrick1983}
  S.~Kirkpatrick, C.~D. Gelatt, and M.~P. Vecchi.
  \newblock Optimization by simulated annealing.
  \newblock {\em Science}, 1983.
  
  \bibitem{Korba2021}
  Anna Korba, Pierre-Cyril Aubin-Frankowski, Szymon Majewski, and Pierre Ablin.
  \newblock Kernel stein discrepancy descent.
  \newblock In {\em Proceedings of the International Conference on Machine Learning}, 2021.
  
  \bibitem{Korba2020}
  Anna Korba, Adil Salim, Michael Arbel, Giulia Luise, and Arthur Gretton.
  \newblock A non-asymptotic analysis for stein variational gradient descent.
  \newblock In {\em Proceedings of Advances in Neural Information Processing Systems}, 2020.
  
  \bibitem{Lee2017}
  Juyong Lee, In-Ho Lee, InSuk Joung, Jooyoung Lee, and Bernard~R. Brooks.
  \newblock Finding multiple reaction pathways via global optimization of action.
  \newblock {\em Nature Communications}, 2017.
  
  \bibitem{Liu2017}
  Qiang Liu.
  \newblock Stein variational gradient descent as gradient flow.
  \newblock {\em Proceedings of Advances in Neural Information Processing Systems}, 2017.
  
  \bibitem{Liu2016Kernel}
  Qiang Liu, Jason Lee, and Michael Jordan.
  \newblock A kernelized stein discrepancy for goodness-of-fit tests.
  \newblock In {\em Proceedings of the International Conference on Machine Learning}, 2016.
  
  \bibitem{Liu2016}
  Qiang Liu and Dilin Wang.
  \newblock Stein variational gradient descent: A general purpose bayesian inference algorithm.
  \newblock {\em Proceedings of Advances in Neural Information Processing Systems}, 2016.
  
  \bibitem{Lu2019}
  Jianfeng Lu, Yulong Lu, and James Nolen.
  \newblock Scaling limit of the stein variational gradient descent: The mean field regime.
  \newblock {\em SIAM Journal on Mathematical Analysis}, 2019.
  
  \bibitem{Luo2019}
  Xiaopeng Luo.
  \newblock Minima distribution for global optimization, 2019.
  
  \bibitem{Malherbe2017}
  Cedric Malherbe and Nicolas Vayatis.
  \newblock Global optimization of {L}ipschitz functions.
  \newblock In {\em Proceedings of the International Conference on Machine Learning}, 2017.
  
  \bibitem{Martinezcantin2014}
  Ruben Martinez-Cantin.
  \newblock Bayesopt: A bayesian optimization library for nonlinear optimization, experimental design and bandits.
  \newblock {\em Journal of Machine Learning Research}, 2014.
  
  \bibitem{Mathlib2020}
  The mathlib Community.
  \newblock The {L}ean mathematical library.
  \newblock In {\em Proceedings of the {ACM} {SIGPLAN} International Conference on Certified Programs and Proofs}, 2020.
  
  \bibitem{Mirjalili2016}
  Seyedali Mirjalili and Andrew Lewis.
  \newblock The whale optimization algorithm.
  \newblock {\em Advances in engineering software}, 2016.
  
  \bibitem{Pinnau2017}
  Ren{\'e} Pinnau, Claudia Totzeck, Oliver Tse, and Stephan Martin.
  \newblock A consensus-based model for global optimization and its mean-field limit.
  \newblock {\em Mathematical Models and Methods in Applied Sciences}, 2017.
  
  \bibitem{Pinter1991}
  J{\'a}nos~D Pint{\'e}r.
  \newblock Global optimization in action.
  \newblock {\em Scientific American}, 1991.
  
  \bibitem{Raginsky2017}
  Maxim Raginsky, Alexander Rakhlin, and Matus Telgarsky.
  \newblock Non-convex learning via stochastic gradient langevin dynamics: a nonasymptotic analysis.
  \newblock In {\em Proceedings of the 2017 Conference on Learning Theory}, 2017.
  
  \bibitem{Rudi2024}
  Alessandro Rudi, Ulysse Marteau-Ferey, and Francis Bach.
  \newblock Finding global minima via kernel approximations.
  \newblock {\em Mathematical Programming}, pages 1--82, 2024.
  
  \bibitem{Sriperumbudur2011}
  Bharath~K. Sriperumbudur, Kenji Fukumizu, and Gert R.~G. Lanckriet.
  \newblock Universality, {Characteristic} {Kernels} and {RKHS} {Embedding} of {Measures}.
  \newblock {\em Journal of Machine Learning Research}, 2011.
  
  \bibitem{Stein1972}
  Charles Stein.
  \newblock A bound for the error in the normal approximation to the distribution of a sum of dependent random variables.
  \newblock In {\em Proceedings of the Berkeley Symposium on Mathematical Statistics and Probability, Volume 2: Probability Theory}, 1972.
  
  \bibitem{Sun2023}
  Lukang Sun, Avetik Karagulyan, and Peter Richtarik.
  \newblock Convergence of stein variational gradient descent under a weaker smoothness condition.
  \newblock In {\em International Conference on Artificial Intelligence and Statistics}, 2023.
  
  \bibitem{simulationlib}
  S.~Surjanovic and D.~Bingham.
  \newblock Virtual library of simulation experiments: Test functions and datasets.
  \newblock Retrieved April 29, 2024, from \url{http://www.sfu.ca/~ssurjano}.
  
  \bibitem{Villani2003}
  Cédric Villani.
  \newblock {\em Topics in Optimal Transportation}.
  \newblock Graduate studies in mathematics. American Mathematical Society, 2003.
  
  \bibitem{Villani2009}
  Cédric Villani.
  \newblock {\em Optimal Transport}.
  \newblock Springer Berlin Heidelberg, 2009.
  
  \bibitem{Welling2011}
  Max Welling and Yee~Whye Teh.
  \newblock Bayesian learning via stochastic gradient langevin dynamics.
  \newblock In {\em Proceedings of the International Conference on Machine Learning}, 2011.
  
  \bibitem{Zhou2008}
  Ding-Xuan Zhou.
  \newblock Derivative reproducing properties for kernel methods in learning theory.
  \newblock {\em Journal of Computational and Applied Mathematics}, 2008.
  
  \end{thebibliography}
\end{document}